\newtheorem{theorem}{Theorem}[section]
\newtheorem{lemma}[theorem]{Lemma}
\newtheorem{proposition}[theorem]{Proposition}
\newtheorem{assumption}[theorem]{Assumption}
\newtheorem{definition}[theorem]{Definition}
\newtheorem{remark}[theorem]{Remark}
\newtheorem{hypothesis}[theorem]{Hypothesis}
\let\originalleft\left
\let\originalright\right
\renewcommand{\left}{\mathopen{}\mathclose\bgroup\originalleft}
\renewcommand{\right}{\aftergroup\egroup\originalright}
\newcommand{\Tr}{\mathop{\mathrm{Tr}}}
\renewcommand{\d}{\/\mathrm{d}\/}
\def\w{\textbf{W}^{\varepsilon}_{{\theta}^{\varepsilon}}}
\def\e{\varepsilon}
\def\S{\mathrm{S}}
\def\L{\mathbb{L}}
\def\A{\mathrm{A}}
\def\U{\mathbf{U}}
\def\F{\mathrm{F}}
\def\C{\mathrm{C}}
\def\f{\mathbf{f}}
\def\B{\mathrm{B}}
\def\D{\mathrm{D}}
\def\y{\mathbf{y}}
\def\Y{\mathbf{Y}}
\def\Z{\mathbf{Z}}
\def\E{\mathbb{E}}
\def\X{\mathbf{X}}
\def\x{\mathbf{x}}
\def\z{\mathbf{z}}
\def\v{\mathbf{v}}
\def\V{\mathbb{v}}
\def\w{\mathbf{w}}
\def\W{\mathrm{W}}
\def\G{\mathrm{G}}
\def\Q{\mathrm{Q}}
\def\N{\mathbb{N}}
\def\V{\mathbb{V}}
\def\wi{\widetilde}
\def\Q{\mathrm{Q}}
\def\u{\mathrm{U}}
\def\P{\mathrm{P}}
\def\u{\mathbf{u}}
\def\H{\mathbb{H}}
\newcommand{\R}{\mathbb{R}}
\renewcommand{\d}{\/\mathrm{d}\/}
\newcommand{\Addresses}{{
		\footnote{
			
			\noindent \textsuperscript{1}Department of Mathematics, Indian Institute of Technology Roorkee-IIT Roorkee,
			Haridwar Highway, Roorkee, Uttarakhand 247667, INDIA.\par\nopagebreak
			\noindent  \textit{e-mail:} \texttt{manilfma@iitr.ac.in, maniltmohan@gmail.com.}
			
			\noindent \textsuperscript{*}Corresponding author.

			\textit{Key words:} convective Brinkman-Forchheimer equations; averaging principle; invariant measure; strong
			convergence.
			
			Mathematics Subject Classification (2010): Primary 60H15; Secondary 35R60, 35Q30, 70K70.

}}}
\begin{document}
	
	
	\title[Averaging Principle for the Stochastic CBF equations]{Averaging principle for the stochastic convective Brinkman-Forchheimer equations		\Addresses}
	\author[M. T. Mohan ]{Manil T. Mohan\textsuperscript{1*}}

	\maketitle
	
	\begin{abstract}
The convective Brinkman-Forchheimer equations  describe the motion of incompressible fluid flows in a saturated porous medium.  This work examines the multiscale stochastic convective Brinkman-Forchheimer (SCBF)  equations perturbed by multiplicative Gaussian noise in two and three dimensional bounded domains. We establish a strong averaging principle for the stochastic 2D SCBF equations, which contains a fast time scale component governed by a stochastic reaction-diffusion equation with damping driven by multiplicative Gaussian noise. We exploit the Khasminkii's time discretization approach in the proofs. 
	\end{abstract}

	\section{Introduction}\label{sec1}\setcounter{equation}{0}
The  convective Brinkman-Forchheimer (CBF) equations  describe the motion of  incompressible viscous fluid  through a rigid, homogeneous, isotropic, porous medium.	Let $\mathcal{O}\subset\R^n$ ($n=2,3$) be a bounded domain with a smooth boundary $\partial\mathcal{O}$. Let  $\X_t(x) \in \R^n$ denotes the velocity field at time $t\in[0,T]$ and position $x\in\mathcal{O}$, $p_t(x)\in\R$ represents the pressure field, $\f_t(x)\in\R^n$ stands for an external forcing.   The	CBF  equations  are given by 
\begin{equation}\label{1}
\left\{
\begin{aligned}
\frac{\partial \X_t}{\partial t}-\mu \Delta\X_t+(\X_t\cdot\nabla)\X_t+\alpha\X_t+\beta|\X_t|^{r-1}\X_t+\nabla p_t&=\mathbf{f}_t, \ \text{ in } \ \mathcal{O}\times(0,T), \\ \nabla\cdot\X_t&=0, \ \text{ in } \ \mathcal{O}\times(0,T), \\
\X_t&=\mathbf{0},\ \text{ on } \ \partial\mathcal{O}\times(0,T), \\
\X_0&=\x, \ \text{ in } \ \mathcal{O},
\end{aligned}
\right.
\end{equation}
the constants $\mu>0$ represents the  Brinkman coefficient (effective viscosity),  $\alpha>0$ stands for the Darcy (permeability of porous medium) coefficient and $\beta>0$ denotes the Forchheimer (proportional to the porosity of the material) coefficient.  In order to obtain the uniqueness of the pressure $p$, one can impose the condition $ \int_{\mathcal{O}}p_t(x)\d x=0, $ for $t\in (0,T)$ also. The absorption exponent $r\in[1,\infty)$ and the case $r=3$ is known as the critical exponent.  It should be noted that for $\alpha=\beta=0$, we get the classical 3D Navier-Stokes equations (see \cite{GGP,OAL,JCR3,Te,Te1}, etc). The works \cite{SNA,KWH,KT2,MTM7}, etc discuss the global solvability results (existence and uniqueness of weak as well as strong solutions) of the deterministic CBF equations in bounded domains. As in the case of classical 3D Navier-Stokes equations, the existence of a unique global strong solution for the CBF  equations \eqref{1} for $n=3$ and $r\in[1,3)$ is an open problem.

The existence and uniqueness of strong solutions to the stochastic 3D tamed Navier-Stokes equations on bounded domains with Dirichlet boundary conditions is established in \cite{MRTZ}.  The existence of martingale solutions for the stochastic 3D Navier-Stokes equations with nonlinear damping is established in \cite{LHGH1}.  The existence of a pathwise unique strong solution  satisfying the energy equality (It\^o's formula) to the stochastic convective Brinkman-Forchheimer (SCBF) equations perturbed by multiplicative Gaussian noise is obtained in \cite{MTM8}. The author exploited the monotonicity and hemicontinuity properties of the linear and nonlinear operators as well as a stochastic generalization of  the Minty-Browder technique in the proofs. In order to obtain the It\^o formula (energy equality), the author made use of the fact that there are functions that can approximate functions defined on smooth bounded domains by elements of eigenspaces of linear operators (e.g., the Laplacian or the Stokes operator) in such a way that the approximations are bounded and converge in both Sobolev and Lebesgue spaces simultaneously (such a construction is available in \cite{CLF}). By using the exponential stability of strong solutions, the existence of a unique ergodic and strongly mixing invariant measure for the SCBF  equations subject to multiplicative Gaussian noise is also established in  \cite{MTM8}. The works \cite{HBAM,ZBGD,WLMR,WL,MRXZ1,MRTZ1}, etc discuss various results on the stochastic tamed 3D Navier-Stokes equations and related models on periodic domains as well as on whole space. 

For the past few decades, averaging principle for multiscale systems got its attention and it has wide range  applications in science and engineering (cf. \cite{RBJE,WEBE,EHVK,EAMEL,MMMC,FWTT}, etc and references therein).  An averaging principle for the deterministic systems was first investigated in \cite{NNYA} and for the stochastic differential  system was first studied by Khasminskii  in  \cite{RZK}. Further generalizations of averaging principle for the stochastic differential equations have been carried out in \cite{DGIG,DLi,WLXN,AYV,JXJL,YXBP}, etc. The author in \cite{SC} established an averaging principle for a general class of stochastic reaction-diffusion systems by using  the classical Khasminskii approach.  The validity of an averaging principle for a class of systems of slow-fast reaction-diffusion equations  with the reaction terms in both equations having polynomial growth, perturbed by multiplicative noise is investigated in \cite{SC1}. An averaging principle for a slow-fast system (the almost periodic case) of stochastic reaction-diffusion equations, whose coefficients of the fast equation depend on time is examined in \cite{SC3}. The author in \cite{CEB} showed  an  averaging  principle  for  a  system  of  stochastic  evolution  equations  of  parabolic  type  with slow and fast time scales and derived explicit bounds for the approximation error with respect to the small parameter defining the fast time scale. An averaging principle for stochastic hyperbolic-parabolic equations with slow and fast  time-scales is proved in \cite{HFLW}. The authors in \cite{ZDXS} established an averaging principle for one dimensional stochastic Burgers equation with slow and fast time-scales. An averaging principle  for the slow component as two dimensional stochastic Navier-Stokes equations  and the fast component as stochastic reaction-diffusion equations by using the classical Khasminskii approach based on time discretization is established  in \cite{SLXS}. In \cite{WLMRX}, the authors obtained a strong averaging principle for slow-fast stochastic partial differential equations with locally monotone coefficients, which includes the  systems like stochastic porous medium equation, the stochastic $p$-Laplace equation, the stochastic Burgers type equation and the stochastic 2D Navier-Stokes equation, etc. For more details on averaging principle for different kinds of stochastic systems like reaction-diffusion system, thermoelastic wave propagation system, FitzHugh-Nagumo system, higher order nonlinear Schr\"odinger system, Kuramoto-Sivashinsky system, etc the interested readers are referred to see \cite{JBGY,CEB1,SC2,YCYS,HFJD,HFJL,HFLW2,HFLW1,PGa,PGa1,PGa2,PGa3,WWAJ,WWAJ1,JXu,JXUM}, etc and references therein. 

 In this work, we establish an averaging principle for the following  stochastic coupled  convective Brinkman-Forchheimer equations for $t\in(0,T)$: 
\begin{equation}\label{1p1}
\left\{
\begin{aligned}
\d \X^{\e}_t&=[\mu\Delta\X^{\e}_t-(\X^{\e}_t\cdot\nabla)\X^{\e}_t-\beta|\X^{\e}_t|^{r-1}\X^{\e}_t+f(\X^{\e}_t,\Y^{\e}_t)-\nabla p^{\e}_t]\d t\\&\quad+\sigma_1(\X^{\e}_t)\d\W_t^{\Q_1},\\
\d \Y^{\e}_t&=\frac{1}{\e}[\mu\Delta \Y^{\e}_t-\beta|\Y^{\e}_t|^{r-1}\Y^{\e}_t+g(\X^{\e}_t,\Y^{\e}_t)]\d t+\frac{1}{\sqrt{\e}}\sigma_2(\X^{\e}_t,\Y^{\e}_t)\d\W_t^{\Q_2},\\
\nabla\cdot\X^{\e}_t&=0,\ \nabla\cdot\Y^{\e}_t=0,\\
\X^{\e}_t\big|_{\partial\mathcal{O}}&=\Y^{\e}_t\big|_{\partial\mathcal{O}}=\mathbf{0},\\
\X^{\e}_0&=\x,\ \Y^{\e}_0=\y,
\end{aligned}
\right. 
\end{equation}
where $\e>0$ is a small parameter describing the ratio of the time scales of the slow component $\X^{\e}_t$  and the fast component $\Y^{\e}_t$, $f,g,\sigma_1,\sigma_2$ are appropriate functions, and $\W_t^{\Q_1}$ and $\W_t^{\Q_2}$ are Hilbert space valued Wiener processes on a complete probability space $(\Omega,\mathscr{F},\mathbb{P})$ with filtration $\{\mathscr{F}_t\}_{t\geq 0}$. As $\alpha$ is not playing a major role in our analysis, we fixed $\alpha=0$ in \eqref{1p1}. The system \eqref{1p1} can be considered as stochastic  convective Brinkman-Forchheimer equations, whose drift coefficient is coupled with a stochastic perturbation $\Y^{\e}_t$, which can be considered as the dramatically varying temperature in the system. We establish a strong averaging principle for the system \eqref{1p1} by exploiting the Khasminkii's time discretization approach in the proofs.  The averaging principle also describes the behavior of the solutions of \eqref{1p1}, when $\e\to 0$ and the physical effects at large timescales influence the dynamics of \eqref{1p1}.  Then, we have the following strong averaging principle (for the functional set up and definition of linear and nonlinear operators, see section \ref{sec2}). 

\begin{theorem}\label{maint}
	Let the Assumption \ref{ass3.6} (A1)-(A3) holds true. Then, for any initial values $\x,\y\in\H$, $p\geq 1$ and $T>0$, we have 
	\begin{align}\label{3.126}
	\lim_{\e\to 0} \E\left(\sup_{t\in[0,T]}\|\X^{\e}_t-\bar\X_t\|_{\H}^{2p}\right)=0, 
	\end{align}
	where $(\X_t^{\e},\Y_t^{\e})$ is the unique strong solution of the coupled SCBF system: 
	\begin{equation}\label{3126}
	\left\{
	\begin{aligned}
	\d \X^{\e}_t&=-[\mu\A \X^{\e}_t+\B(\X^{\e}_t)+\beta\mathcal{C}(\X^{\e}_t)-f(\X^{\e}_t,\Y^{\e}_t)]\d t+\sigma_1(\X^{\e}_t)\d\W_t^{\Q_1},\\ 
	\d \Y^{\e}_t&=-\frac{1}{\e}[\mu\A \Y^{\e}_t+\beta\mathcal{C}(\Y_t^{\e})-g(\X^{\e}_t,\Y^{\e}_t)]\d t+\frac{1}{\sqrt{\e}}\sigma_2(\X^{\e}_t,\Y^{\e}_t)\d\W_t^{\Q_2},\\
	\X^{\e}_0&=\x,
	\end{aligned}
	\right. 
	\end{equation}
	and 	$\bar\X_t$ is the unique strong solution of the corresponding averaged system: 
	\begin{equation}\label{3127}
	\left\{
	\begin{aligned}
	\d\bar{\X}_t&=-[\mu\A\bar{\X}_t+\B(\bar{\X}_t)+\beta\mathcal{C}(\bar{\X}_t)]\d t+\bar{f}(\bar{\X}_t)\d t+\sigma_1(\bar{\X}_t)\d\W^{\Q_1}_t, \\ \bar{\X}_0&=\x, 
	\end{aligned}\right. 
	\end{equation}
	with the average 
	$
	\bar{f}(\x)=\int_{\H}f(\x,\y)\nu^{\x}(\d\y), \ \x\in\H, 
	$ and $\nu^{\x}$ is the unique invariant distribution of the transition  semigroup for the frozen system: 
	\begin{equation}\label{3128} 
	\left\{
	\begin{aligned}
	\d\Y_t&=-[\mu\A\Y_t+\beta\mathcal{C}(\Y_t)-g(\x,\Y_t)]\d t+\sigma_2(\x,\Y_t)\d\bar{\W}_t^{\Q_2},\\
	\Y_0&=\y,
	\end{aligned}\right.
	\end{equation}
	where $\bar{\W}_t^{\Q_2}$ is an $\H$-valued $\Q_2$-Wiener process, which is independent of $\W_{t}^{\Q_1}$ and $\W_{t}^{\Q_2}$. 
\end{theorem}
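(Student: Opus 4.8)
The plan is to implement Khasminskii's time-discretization scheme, adapted to the locally monotone structure of the convective Brinkman--Forchheimer nonlinearities. Before discretizing, I would assemble the ingredients furnished by the preceding sections: uniform-in-$\e$ moment bounds
\begin{align*}
\sup_{\e>0}\E\left(\sup_{t\in[0,T]}\|\X^{\e}_t\|_{\H}^{2p}\right)<\infty,\qquad \sup_{\e>0}\sup_{t\in[0,T]}\E\left(\|\Y^{\e}_t\|_{\H}^{2p}\right)<\infty,
\end{align*}
which rest on the coercivity of $\mu\A$ and the monotone absorption $\beta\mathcal{C}$ together with the cancellation $\langle\B(\X),\X\rangle_{\H}=0$; the exponential ergodicity of the frozen system \eqref{3128}, uniform in the parameter $\x\in\H$, which guarantees the invariant measure $\nu^{\x}$ and the Lipschitz continuity of $\bar f(\x)=\int_{\H}f(\x,\y)\,\nu^{\x}(\d\y)$; and a time-regularity estimate $\E\big(\|\X^{\e}_{t}-\X^{\e}_{s}\|_{\H}^{2}\big)\le C|t-s|^{\gamma}$ for the slow component, established up to a stopping time $\tau_N=\inf\{t\ge0:\|\X^{\e}_t\|_{\H}\ge N\}$ in order to tame the super-linear growth of $\B$ and $\mathcal{C}$.

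I would then partition $[0,T]$ into subintervals of length $\delta=\delta(\e)$ chosen so that $\delta\to0$ while $\delta/\e\to\infty$ as $\e\to0$, and introduce the auxiliary fast process $\widehat{\Y}^{\e}_t$ which, on each block $[k\delta,(k+1)\delta)$, solves the fast equation of \eqref{3126} with the slow variable frozen at $\X^{\e}_{k\delta}$. Invoking the (locally) Lipschitz continuity of $g$ and $\sigma_2$ and the time-regularity of $\X^{\e}$ recorded above, I would prove the closeness estimate
\begin{align*}
\E\left(\sup_{t\in[0,T]}\|\Y^{\e}_t-\widehat{\Y}^{\e}_t\|_{\H}^{2}\right)\le C\,\delta^{\gamma},
\end{align*}
together with a companion bound for the auxiliary slow process obtained by feeding $\widehat{\Y}^{\e}$ into the drift of the $\X^{\e}$-equation.

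The crucial step is the ergodic averaging estimate. Setting $s(\delta):=\lfloor s/\delta\rfloor\,\delta$, I would show
\begin{align*}
\lim_{\e\to0}\E\left\|\int_0^{t}\Big[f\big(\X^{\e}_{s(\delta)},\widehat{\Y}^{\e}_s\big)-\bar f\big(\X^{\e}_{s(\delta)}\big)\Big]\d s\right\|_{\H}^{2}=0,
\end{align*}
by exploiting the Markov property blockwise and the Khasminskii-type bound, valid for the frozen process $\Y^{\x,\y}$ driven by \eqref{3128},
\begin{align*}
\E\left\|\frac{1}{S}\int_0^{S}f(\x,\Y^{\x,\y}_s)\,\d s-\bar f(\x)\right\|_{\H}^{2}\le \frac{C\,(1+\|\x\|_{\H}^{2}+\|\y\|_{\H}^{2})}{S},
\end{align*}
which follows from the exponential mixing of the frozen transition semigroup. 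The effective averaging window within each block is of order $\delta/\e$, which is precisely why the fast time scale must be chosen subordinate to the mesh.

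Finally, I would close the argument by Gronwall's inequality. Applying the It\^o formula to $\|\X^{\e}_t-\bar\X_t\|_{\H}^{2p}$ (in the variational sense), the dissipative contributions of $\mu\A$ and of the monotone operator $\beta\mathcal{C}$ are non-positive and are discarded, the convective difference is absorbed through the local Lipschitz estimate of $\B$ valid on $\{t\le\tau_N\}$, the diffusion mismatch is handled via the Lipschitz property of $\sigma_1$ and the Burkholder--Davis--Gundy inequality, and the drift discrepancy is decomposed as $\bar f(\X^{\e})-\bar f(\bar\X)$, which feeds the Gronwall term, plus the averaging error controlled in the previous paragraph. Optimizing the relation $\delta=\delta(\e)$, letting $\e\to0$ and then $N\to\infty$ (the latter justified by the uniform moment bounds), yields \eqref{3.126}. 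The main obstacle I anticipate is reconciling the ergodic averaging machinery, cleanest under global Lipschitz and dissipativity hypotheses, with the genuinely nonlinear, only locally monotone operators $\B$ and $\mathcal{C}$: establishing the uniform-in-$\x$ exponential mixing of \eqref{3128} in the presence of the polynomial Forchheimer term and simultaneously dominating $\B(\X^{\e})-\B(\bar\X)$ without a global Lipschitz constant are what compel the stopping-time localization and the careful balancing of $\delta\to0$ against $\delta/\e\to\infty$.
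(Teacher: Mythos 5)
Your overall architecture is the paper's: uniform moment bounds, a Khasminskii block decomposition with an auxiliary fast process frozen at $\X^{\e}_{k\delta}$, exponential mixing of the frozen equation to control the averaging error at rate $(\e/\delta)^{1/2}$, and a final Gronwall argument with $\delta=\delta(\e)$ balanced between $\delta\to0$ and $\delta/\e\to\infty$. However, there is a concrete gap in how you propose to handle the nonlinearities in the Gronwall step. First, your localization is by the wrong quantity: you stop at $\tau_N=\inf\{t:\|\X^{\e}_t\|_{\H}\geq N\}$, but the convective term in $2$D produces, for $\Z^{\e}_t=\X^{\e}_t-\bar\X_t$, the bound
\begin{align*}
2|\langle\B(\Z^{\e}_s,\X^{\e}_s),\Z^{\e}_s\rangle|\leq\mu\|\Z^{\e}_s\|_{\V}^2+\frac{2}{\mu}\|\X^{\e}_s\|_{\V}^2\|\Z^{\e}_s\|_{\H}^2,
\end{align*}
so Gronwall yields a factor $\exp\big(\frac{2}{\mu}\int_0^{t}\|\X^{\e}_s\|_{\V}^2\d s\big)$. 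A pathwise bound on $\sup_t\|\X^{\e}_t\|_{\H}$ does not control $\int_0^t\|\X^{\e}_s\|_{\V}^2\d s$ (only its expectation is bounded), so your stopping time does not make this exponential deterministic. The paper instead stops when $\int_0^t\|\X^{\e}_s\|_{\V}^2\d s\geq R$, which is exactly what the Gronwall factor requires, and then removes the stopping time via Chebyshev applied to $\E\int_0^T\|\X^{\e}_s\|_{\V}^2\d s$.

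Second, your plan to discard the monotone contribution of $\beta\mathcal{C}$ and absorb $\B(\X^{\e})-\B(\bar\X)$ by a ``local Lipschitz estimate of $\B$'' cannot work for $n=3$ (or $n=2$, $r>3$): there is no Ladyzhenskaya-type bound that controls $\langle\B(\Z^{\e},\X^{\e}),\Z^{\e}\rangle$ by $\|\Z^{\e}\|_{\H}^2$ times an integrable weight in $3$D. The paper's point is precisely that for $r\geq3$ one must \emph{retain} the monotonicity gain $\frac{\beta}{2}\||\X^{\e}|^{\frac{r-1}{2}}\Z^{\e}\|_{\H}^2$ from $\mathcal{C}(\X^{\e})-\mathcal{C}(\bar\X)$ and use it, via the interpolation $\|\X^{\e}\Z^{\e}\|_{\H}^2\leq\beta\mu\||\X^{\e}|^{\frac{r-1}{2}}\Z^{\e}\|_{\H}^2+C\|\Z^{\e}\|_{\H}^2$ (valid for $r>3$, and for $r=3$ under $2\beta\mu\geq1$), to absorb the convective term; this is what makes the stopping time unnecessary there and renders the constant in Gronwall deterministic. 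Without this cancellation your scheme only covers $n=2$, $r\in[1,3]$, and even there only after correcting the stopping time. Two smaller points: the time-regularity estimate you need is the time-integrated one $\E\int_0^T\|\X^{\e}_t-\X^{\e}_{t(\delta)}\|_{\H}^2\d t\leq C\delta^{1/2}$, which holds globally with no localization, and the closeness of $\Y^{\e}$ and $\widehat\Y^{\e}$ is likewise only needed (and only proved) in time-integrated form rather than with a supremum inside the expectation.
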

In order to obtain the proof of the above Theorem, we use Khasminskii's time discretization approach (cf. \cite{RZK}). As in the case of 2D Navier-Stokes equations (cf. \cite{SLXS}), for the case $n=2$ and $r\in[1,3]$, we adopted stopping time arguments also to get the proof. But for $n=2,3$ and $r\in(3,\infty)$, ($2\beta\mu\geq 1$ for $n=r=3$), without using stopping time arguments also, we are able to get the proof by making use of the global monotonicity property of the linear and nonlinear operators. Unlike the works available in the literature, we are considering the fast component (second equation in \eqref{1p1}) as the stochastic reaction-diffusion equation with damping (or with the Forchheimer  term). 

The rest of the paper is organized as follows. In the next section, we provide some functional spaces as well as the hypothesis satisfied by the functions $f,g,\sigma_1,\sigma_2$ needed to obtain the global solvability of the system \eqref{1p1}. We discuss about the existence and uniqueness of a pathwise strong solution to the system \eqref{1p1} (Theorem \ref{exis}) in section \ref{sec4}. In the final section, we examine the strong averaging principle for the coupled SCBF equations \eqref{1p1}. We first establish the existence of a unique invariant measure for the frozen system \eqref{3128} (Proposition \ref{prop3.12}). Then, by exploiting the  classical Khasminskii approach based on time discretization, we prove strong convergence of the slow component \eqref{3126} to the solution of the corresponding averaged SCBF system \eqref{3127} (Theorem \ref{maint}).

\section{Mathematical Formulation}\label{sec2}\setcounter{equation}{0}
This section is devoted for the necessary function spaces and the hypothesis satisfied by the functions $f,g$ and the noise coefficients $\sigma_1,\sigma_2$ needed to obtain the global solvability results for the coupled SCBF equations \eqref{1p1}.

\subsection{Function spaces}\label{sub2.1} Let $\C_0^{\infty}(\mathcal{O};\R^n)$ denotes the space of all infinitely differentiable functions  ($\R^n$-valued) with compact support in $\mathcal{O}\subset\R^n$.  We define 
\begin{align*} 
\mathcal{V}&:=\{\X\in\C_0^{\infty}(\mathcal{O},\R^n):\nabla\cdot\X=0\},\\
\mathbb{H}&:=\text{the closure of }\ \mathcal{V} \ \text{ in the Lebesgue space } \L^2(\mathcal{O})=\mathrm{L}^2(\mathcal{O};\R^n),\\
\mathbb{V}&:=\text{the closure of }\ \mathcal{V} \ \text{ in the Sobolev space } \H_0^1(\mathcal{O})=\mathrm{H}_0^1(\mathcal{O};\R^n),\\
\widetilde{\L}^{p}&:=\text{the closure of }\ \mathcal{V} \ \text{ in the Lebesgue space } \L^p(\mathcal{O})=\mathrm{L}^p(\mathcal{O};\R^n),
\end{align*}
for $p\in(2,\infty)$. Then under some smoothness assumptions on the boundary, we characterize the spaces $\H$, $\V$ and $\widetilde{\L}^p$ as 
$
\H=\{\X\in\L^2(\mathcal{O}):\nabla\cdot\X=0,\X\cdot\mathbf{n}\big|_{\partial\mathcal{O}}=0\}$,  with norm  $\|\X\|_{\H}^2:=\int_{\mathcal{O}}|\X(x)|^2\d x,
$
where $\mathbf{n}$ is the outward normal to $\partial\mathcal{O}$,
$
\V=\{\X\in\H_0^1(\mathcal{O}):\nabla\cdot\X=0\},$  with norm $ \|\X\|_{\V}^2:=\int_{\mathcal{O}}|\nabla\X(x)|^2\d x,
$ and $\widetilde{\L}^p=\{\X\in\L^p(\mathcal{O}):\nabla\cdot\X=0, \X\cdot\mathbf{n}\big|_{\partial\mathcal{O}}=0\},$ with norm $\|\X\|_{\widetilde{\L}^p}^p=\int_{\mathcal{O}}|\X(x)|^p\d x$, respectively.
Let $(\cdot,\cdot)$ denotes the inner product in the Hilbert space $\H$ and $\langle \cdot,\cdot\rangle $ denotes the induced duality between the spaces $\V$  and its dual $\V'$ as well as $\widetilde{\L}^p$ and its dual $\widetilde{\L}^{p'}$, where $\frac{1}{p}+\frac{1}{p'}=1$. Note that $\H$ can be identified with its dual $\H'$. We endow the space $\V\cap\widetilde{\L}^{p}$ with the norm $\|\X\|_{\V}+\|\X\|_{\widetilde{\L}^{p}},$ for $\X\in\V\cap\widetilde{\L}^p$ and its dual $\V'+\widetilde{\L}^{p'}$ with the norm $$\inf\left\{\max\left(\|\Y_1\|_{\V'},\|\Y_1\|_{\widetilde{\L}^{p'}}\right):\Y=\Y_1+\Y_2, \ \Y_1\in\V', \ \Y_2\in\widetilde{\L}^{p'}\right\}.$$ Furthermore, we have the continuous embedding $\V\cap\widetilde{\L}^p\hookrightarrow\H\hookrightarrow\V'+\widetilde{\L}^{p'}$. 
\subsection{Linear operator}\label{sub2.2}
Let us define
\begin{equation*}
\left\{
\begin{aligned}
\A\X:&=-\mathrm{P}_{\H}\Delta\X,\;\X\in\D(\A),\\ \D(\A):&=\V\cap\H^{2}(\mathcal{O}).
\end{aligned}
\right.
\end{equation*}
It can be easily seen that the operator $\A$ is a non-negative self-adjoint operator in $\H$ with $\V=\D(\A^{1/2})$ and \begin{align}\label{2.7a}\langle \A\X,\X\rangle =\|\X\|_{\V}^2,\ \textrm{ for all }\ \X\in\V, \ \text{ so that }\ \|\A\X\|_{\V'}\leq \|\X\|_{\V}.\end{align}
For a bounded domain $\mathcal{O}$, the operator $\A$ is invertible and its inverse $\A^{-1}$ is bounded, self-adjoint and compact in $\H$. Thus, using the spectral theorem, the spectrum of $\A$ consists of an infinite sequence $0< \lambda_1\leq \lambda_2\leq\ldots\leq \lambda_k\leq \ldots,$ with $\lambda_k\to\infty$ as $k\to\infty$ of eigenvalues. 
Moreover, there exists an orthogonal basis $\{e_k\}_{k=1}^{\infty} $ of $\H$ consisting of eigenvectors of $\A$ such that $\A e_k =\lambda_ke_k$,  for all $ k\in\mathbb{N}$.  We know that $\X$ can be expressed as $\X=\sum_{k=1}^{\infty}\langle\X,e_k\rangle e_k$ and $\A\X=\sum_{k=1}^{\infty}\lambda_k\langle\X,e_k\rangle e_k$, for all $\X\in\D(\A)$. Thus, it is immediate that 
\begin{align}\label{poin}
\|\nabla\X\|_{\mathbb{H}}^2=\langle \A\X,\X\rangle =\sum_{k=1}^{\infty}\lambda_k|\langle \X,e_k\rangle|^2\geq \lambda_1\sum_{k=1}^{\infty}|\langle\X,e_k\rangle|^2=\lambda_1\|\X\|_{\mathbb{H}}^2,
\end{align}
which is the \emph{Poincar\'e inequality}. 

\subsection{Bilinear operator}
Let us define the \emph{trilinear form} $b(\cdot,\cdot,\cdot):\V\times\V\times\V\to\R$ by $$b(\X,\Y,\Z)=\int_{\mathcal{O}}(\X(x)\cdot\nabla)\Y(x)\cdot\Z(x)\d x=\sum_{i,j=1}^n\int_{\mathcal{O}}\X_i(x)\frac{\partial \Y_j(x)}{\partial x_i}\Z_j(x)\d x.$$ If $\X, \Y$ are such that the linear map $b(\X, \Y, \cdot) $ is continuous on $\V$, the corresponding element of $\V'$ is denoted by $\B(\X, \Y)$. We also denote (with an abuse of notation) $\B(\X) = \B(\X, \X)=\mathrm{P}_{\H}(\X\cdot\nabla)\X$.
An integration by parts yields  
\begin{equation}\label{b0}
\left\{
\begin{aligned}
b(\X,\Y,\Y) &= 0,\text{ for all }\X,\Y \in\V,\\
b(\X,\Y,\Z) &=  -b(\X,\Z,\Y),\text{ for all }\X,\Y,\Z\in \V.
\end{aligned}
\right.\end{equation}
In the trilinear form, an application of H\"older's inequality yields
\begin{align*}
|b(\X,\Y,\Z)|=|b(\X,\Z,\Y)|\leq \|\X\|_{\widetilde{\L}^{r+1}}\|\Y\|_{\widetilde{\L}^{\frac{2(r+1)}{r-1}}}\|\Z\|_{\V},
\end{align*}
for all $\X\in\V\cap\widetilde{\L}^{r+1}$, $\Y\in\V\cap\widetilde{\L}^{\frac{2(r+1)}{r-1}}$ and $\Z\in\V$, so that we get 
\begin{align}\label{2p9}
\|\B(\X,\Y)\|_{\V'}\leq \|\X\|_{\widetilde{\L}^{r+1}}\|\Y\|_{\widetilde{\L}^{\frac{2(r+1)}{r-1}}}.
\end{align}
Hence, the trilinear map $b : \V\times\V\times\V\to \R$ has a unique extension to a bounded trilinear map from $(\V\cap\widetilde{\L}^{r+1})\times(\V\cap\widetilde{\L}^{\frac{2(r+1)}{r-1}})\times\V$ to $\R$. It can also be seen that $\B$ maps $ \V\cap\widetilde{\L}^{r+1}$  into $\V'+\widetilde{\L}^{\frac{r+1}{r}}$ and using interpolation inequality, we get 
\begin{align}\label{212}
\left|\langle \B(\X,\X),\Y\rangle \right|=\left|b(\X,\Y,\X)\right|\leq \|\X\|_{\widetilde{\L}^{r+1}}\|\X\|_{\widetilde{\L}^{\frac{2(r+1)}{r-1}}}\|\Y\|_{\V}\leq\|\X\|_{\widetilde{\L}^{r+1}}^{\frac{r+1}{r-1}}\|\X\|_{\H}^{\frac{r-3}{r-1}}\|\Y\|_{\V},
\end{align}
for all $\Y\in\V\cap\widetilde{\L}^{r+1}$. Thus, we have 
\begin{align}\label{2.9a}
\|\B(\X)\|_{\V'+\widetilde{\L}^{\frac{r+1}{r}}}\leq\|\X\|_{\widetilde{\L}^{r+1}}^{\frac{r+1}{r-1}}\|\X\|_{\H}^{\frac{r-3}{r-1}},
\end{align}
for $r\geq 3$. 

For $n=2$ and $r\in[1,3]$, using H\"older's and Ladyzhenskaya's inequalities, we obtain 
\begin{align*}
|\langle\B(\X,\Y),\Z\rangle|=|\langle\B(\X,\Z),\Y\rangle|\leq\|\X\|_{\wi\L^4}\|\Y\|_{\wi\L^4}\|\Z\|_{\V},
\end{align*}
for all $\X,\Y\in\wi\L^4$ and $\Z\in\V$, so that we get $\|\B(\X,\Y)\|_{\V'}\leq\|\X\|_{\wi\L^4}\|\Y\|_{\wi\L^4}$. Furthermore, we have $$\|\B(\X,\X)\|_{\V'}\leq\|\X\|_{\wi\L^4}^2\leq\sqrt{2}\|\X\|_{\H}\|\X\|_{\V}\leq\sqrt{\frac{2}{\lambda_1}}\|\X\|_{\V}^2,$$ for all $\X\in\V$. 
\subsection{Nonlinear operator}
Let us now consider the operator $\mathcal{C}(\X):=\P_{\H}(|\X|^{r-1}\X)$. It is immediate that $\langle\mathcal{C}(\X),\X\rangle =\|\X\|_{\widetilde{\L}^{r+1}}^{r+1}$. 
For any $r\in[1,\infty)$, we have 
\begin{align}\label{2pp11}
&\langle \mathrm{P}_{\H}(\X|\X|^{r-1})-\mathrm{P}_{\H}(\Y|\Y|^{r-1}),\X-\Y\rangle\nonumber\\&=\int_{\mathcal{O}}\left(\X(x)|\X(x)|^{r-1}-\Y(x)|\Y(x)|^{r-1}\right)\cdot(\X(x)-\Y(x))\d x\nonumber\\&=\int_{\mathcal{O}}\left(|\X(x)|^{r+1}-|\X(x)|^{r-1}\X(x)\cdot\Y(x)-|\Y(x)|^{r-1}\X(x)\cdot\Y(x)+|\Y(x)|^{r+1}\right)\d x\nonumber\\&\geq\int_{\mathcal{O}}\left(|\X(x)|^{r+1}-|\X(x)|^{r}|\Y(x)|-|\Y(x)|^{r}|\X(x)|+|\Y(x)|^{r+1}\right)\d x\nonumber\\&=\int_{\mathcal{O}}\left(|\X(x)|^r-|\Y(x)|^r\right)(|\X(x)|-|\Y(x)|)\d x\geq 0. 
\end{align}
Furthermore, we find 
\begin{align}\label{224}
&\langle\mathrm{P}_{\H}(\X|\X|^{r-1})-\mathrm{P}_{\H}(\Y|\Y|^{r-1}),\X-\Y\rangle\nonumber\\&=\langle|\X|^{r-1},|\X-\Y|^2\rangle+\langle|\Y|^{r-1},|\X-\Y|^2\rangle+\langle\Y|\X|^{r-1}-\X|\Y|^{r-1},\X-\Y\rangle\nonumber\\&=\||\X|^{\frac{r-1}{2}}(\X-\Y)\|_{\H}^2+\||\Y|^{\frac{r-1}{2}}(\X-\Y)\|_{\H}^2\nonumber\\&\quad+\langle\X\cdot\Y,|\X|^{r-1}+|\Y|^{r-1}\rangle-\langle|\X|^2,|\Y|^{r-1}\rangle-\langle|\Y|^2,|\X|^{r-1}\rangle.
\end{align}
But, we know that 
\begin{align*}
&\langle\X\cdot\Y,|\X|^{r-1}+|\Y|^{r-1}\rangle-\langle|\X|^2,|\Y|^{r-1}\rangle-\langle|\Y|^2,|\X|^{r-1}\rangle\nonumber\\&=-\frac{1}{2}\||\X|^{\frac{r-1}{2}}(\X-\Y)\|_{\H}^2-\frac{1}{2}\||\Y|^{\frac{r-1}{2}}(\X-\Y)\|_{\H}^2+\frac{1}{2}\langle\left(|\X|^{r-1}-|\Y|^{r-1}\right),\left(|\X|^2-|\Y|^2\right)\rangle \nonumber\\&\geq -\frac{1}{2}\||\X|^{\frac{r-1}{2}}(\X-\Y)\|_{\H}^2-\frac{1}{2}\||\Y|^{\frac{r-1}{2}}(\X-\Y)\|_{\H}^2.
\end{align*}
From \eqref{224}, we finally have 
\begin{align}\label{2.23}
&\langle\mathrm{P}_{\H}(\X|\X|^{r-1})-\mathrm{P}_{\H}(\Y|\Y|^{r-1}),\X-\Y\rangle\geq \frac{1}{2}\||\X|^{\frac{r-1}{2}}(\X-\Y)\|_{\H}^2+\frac{1}{2}\||\Y|^{\frac{r-1}{2}}(\X-\Y)\|_{\H}^2\geq 0,
\end{align}
for $r\geq 1$.  	It is important to note that 
\begin{align}\label{a215}
\|\X-\Y\|_{\wi\L^{r+1}}^{r+1}&=\int_{\mathcal{O}}|\X(x)-\Y(x)|^{r-1}|\X(x)-\Y(x)|^{2}\d x\nonumber\\&\leq 2^{r-2}\int_{\mathcal{O}}(|\X(x)|^{r-1}+|\Y(x)|^{r-1})|\X(x)-\Y(x)|^{2}\d x\nonumber\\&\leq 2^{r-2}\||\X|^{\frac{r-1}{2}}(\X-\Y)\|_{\L^2}^2+2^{r-2}\||\Y|^{\frac{r-1}{2}}(\X-\Y)\|_{\L^2}^2. 
\end{align}
Combining \eqref{2.23} and \eqref{a215}, we obtain 
\begin{align}\label{214}
\langle\mathcal{C}(\X)-\mathcal{C}(\Y),\X-\Y\rangle\geq\frac{1}{2^{r-1}}\|\X-\Y\|_{\wi\L^{r+1}}^{r+1},
\end{align}
for $r\geq 1$. 
\subsection{Monotonicity}
In this subsection, we discuss about the monotonicity as well as the hemicontinuity properties of the linear and nonlinear operators.
\begin{definition}[\cite{VB}]
	Let $\mathbb{X}$ be a Banach space and let $\mathbb{X}^{'}$ be its topological dual.
	An operator $\G:\mathrm{D}\rightarrow
	\mathbb{X}^{'},$ $\mathrm{D}=\mathrm{D}(\G)\subset \mathbb{X}$ is said to be
	\emph{monotone} if
	$$\langle\G(x)-\G(y),x-y\rangle\geq
	0,\ \text{ for all } \ x,y\in \mathrm{D}.$$ 
	The operator $\G(\cdot)$ is said to be \emph{hemicontinuous}, if for all $x, y\in\mathbb{X}$ and $w\in\mathbb{X}',$ $$\lim_{\lambda\to 0}\langle\G(x+\lambda y),w\rangle=\langle\G(x),w\rangle.$$
	The operator $\G(\cdot)$ is called \emph{demicontinuous}, if for all $x\in\mathrm{D}$ and $y\in\mathbb{X}$, the functional $x \mapsto\langle \G(x), y\rangle$  is continuous, or in other words, $x_k\to x$ in $\mathbb{X}$ implies $\G(x_k)\xrightarrow{w}\G(x)$ in $\mathbb{X}'$. Clearly demicontinuity implies hemicontinuity. 
\end{definition}
\begin{lemma}[Theorem 2.2, \cite{MTM7}]\label{thm2.2}
	Let $\X,\Y\in\V\cap\widetilde{\L}^{r+1}$, for $r>3$. Then,	for the operator $\G(\X)=\mu \A\X+\B(\X)+\beta\mathcal{C}(\X)$, we  have 
	\begin{align}\label{fe}
	\langle(\G(\X)-\G(\Y),\X-\Y\rangle+\eta\|\X-\Y\|_{\H}^2\geq 0,
	\end{align}
	where $\eta=\frac{r-3}{2\mu(r-1)}\left(\frac{2}{\beta\mu (r-1)}\right)^{\frac{2}{r-3}}.$ That is, the operator $\G+\eta\mathrm{I}$ is a monotone operator from $\V\cap\widetilde{\L}^{r+1}$ to $\V'+\widetilde{\L}^{\frac{r+1}{r}}$. 
\end{lemma}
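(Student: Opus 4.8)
The plan is to set $\Z:=\X-\Y$ and decompose $\langle\G(\X)-\G(\Y),\Z\rangle$ into the contributions of the three operators $\mu\A$, $\B$ and $\beta\mathcal{C}$, estimating each in turn. The linear part is immediate from \eqref{2.7a}, giving $\mu\langle\A\Z,\Z\rangle=\mu\|\Z\|_{\V}^2\geq 0$. For the nonlinear part, the lower bound \eqref{2.23} already yields
\[
\beta\langle\mathcal{C}(\X)-\mathcal{C}(\Y),\Z\rangle\geq\frac{\beta}{2}\||\X|^{\frac{r-1}{2}}\Z\|_{\H}^2+\frac{\beta}{2}\||\Y|^{\frac{r-1}{2}}\Z\|_{\H}^2,
\]
and the whole point of the argument is that the second good term here is exactly what will absorb the bilinear contribution.

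The bilinear term is the main obstacle. Using bilinearity I would write $\B(\X)-\B(\Y)=\B(\X,\Z)+\B(\Z,\Y)$, and then the cancellation properties \eqref{b0} give $\langle\B(\X,\Z),\Z\rangle=b(\X,\Z,\Z)=0$ and $\langle\B(\Z,\Y),\Z\rangle=b(\Z,\Y,\Z)=-b(\Z,\Z,\Y)$, so the entire bilinear contribution reduces to the single term $-b(\Z,\Z,\Y)$. A Cauchy--Schwarz estimate bounds its modulus by $\|\Z\|_{\V}\,\||\Y|\,|\Z|\|_{\H}$, and a first Young's inequality peels off $\frac{\mu}{2}\|\Z\|_{\V}^2$, which is absorbed by the linear term; this leaves $\frac{1}{2\mu}\||\Y|\,|\Z|\|_{\H}^2=\frac{1}{2\mu}\int_{\mathcal{O}}|\Y|^2|\Z|^2\d x$ to control (this integral is finite since $\X,\Y\in\widetilde{\L}^{r+1}$ and $\Z$ lies in the intermediate space $\widetilde{\L}^{\frac{2(r+1)}{r-1}}$ by interpolation).

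The crucial step is to dominate $\int_{\mathcal{O}}|\Y|^2|\Z|^2\d x$ by the good term $\||\Y|^{\frac{r-1}{2}}\Z\|_{\H}^2$ plus a multiple of $\|\Z\|_{\H}^2$. Writing $|\Y|^2|\Z|^2=\big(|\Y|^{r-1}|\Z|^2\big)^{\frac{2}{r-1}}\big(|\Z|^2\big)^{\frac{r-3}{r-1}}$ and applying Young's inequality with the conjugate exponents $\frac{r-1}{2}$ and $\frac{r-1}{r-3}$ and a free parameter $\delta>0$ gives
\[
\int_{\mathcal{O}}|\Y|^2|\Z|^2\d x\leq\frac{2\delta}{r-1}\||\Y|^{\frac{r-1}{2}}\Z\|_{\H}^2+\frac{r-3}{r-1}\delta^{-\frac{2}{r-3}}\|\Z\|_{\H}^2.
\]
This is precisely where the hypothesis $r>3$ is essential, since the splitting and the exponent $\frac{2}{r-3}$ require $r-3>0$. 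Choosing $\delta=\frac{\beta\mu(r-1)}{2}$ so that $\frac{1}{2\mu}\cdot\frac{2\delta}{r-1}=\frac{\beta}{2}$, the $\||\Y|^{\frac{r-1}{2}}\Z\|_{\H}^2$ produced by the bilinear estimate is cancelled exactly by the matching term from \eqref{2.23}, while the residual coefficient of $\|\Z\|_{\H}^2$ becomes $\frac{r-3}{2\mu(r-1)}\big(\frac{2}{\beta\mu(r-1)}\big)^{\frac{2}{r-3}}=\eta$. Collecting everything leaves $\langle\G(\X)-\G(\Y),\Z\rangle\geq\frac{\mu}{2}\|\Z\|_{\V}^2+\frac{\beta}{2}\||\X|^{\frac{r-1}{2}}\Z\|_{\H}^2-\eta\|\Z\|_{\H}^2\geq-\eta\|\Z\|_{\H}^2$, which is \eqref{fe}; the final assertion that $\G+\eta\I$ is monotone is then immediate. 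The only delicate point is the interpolation-and-absorption bookkeeping just described, which forces the precise value of $\eta$; everything else is routine.
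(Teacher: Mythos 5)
Your proposal is correct and follows essentially the same route the paper itself uses (the identical computation appears in Step~2 of the proof of Theorem~\ref{exis}, equations \eqref{2a27}--\eqref{2a30}): reduce the bilinear term to $-b(\Z,\Z,\Y)$, peel off $\tfrac{\mu}{2}\|\Z\|_{\V}^2$ by Young, split $|\Y|^2|\Z|^2$ with exponents $\tfrac{r-1}{2}$ and $\tfrac{r-1}{r-3}$, and absorb the resulting $\tfrac{\beta}{2}\||\Y|^{\frac{r-1}{2}}\Z\|_{\H}^2$ into the lower bound \eqref{2.23} for $\mathcal{C}$, which pins down $\eta$. The only cosmetic difference is that the paper applies H\"older to the integrals first and then Young to the two resulting numbers, whereas you apply Young pointwise and integrate; the constants come out the same.
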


\begin{lemma}[Theorem 2.3, \cite{MTM7}]\label{thm2.3}
	For the critical case $r=3$ with $2\beta\mu \geq 1$, the operator $\G(\cdot):\V\cap\widetilde{\L}^{r+1}\to \V'+\widetilde{\L}^{\frac{r+1}{r}}$ is globally monotone, that is, for all $\X,\Y\in\V$, we have 
	\begin{align}\label{218}\langle\G(\X)-\G(\Y),\X-\Y\rangle\geq 0.\end{align}
\end{lemma}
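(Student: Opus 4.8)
The plan is to test the difference $\G(\X)-\G(\Y)$ against $\Z:=\X-\Y$ and to show that the contributions of the three constituent operators combine into a nonnegative quadratic form precisely when $2\beta\mu\geq1$. Writing $\G(\X)-\G(\Y)=\mu\A\Z+(\B(\X)-\B(\Y))+\beta(\mathcal{C}(\X)-\mathcal{C}(\Y))$ and pairing with $\Z$, the linear part contributes $\mu\langle\A\Z,\Z\rangle=\mu\|\Z\|_{\V}^2$ by \eqref{2.7a}. Since for $n\le 4$ one has the embedding $\V\hookrightarrow\wi\L^4=\wi\L^{r+1}$ at $r=3$, all pairings below are finite for $\X,\Y\in\V$, so the reduction ``$\X,\Y\in\V\cap\wi\L^{r+1}$'' is automatic here.

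For the bilinear part I would first linearize the difference: using bilinearity, $\B(\X)-\B(\Y)=\B(\X,\Z)+\B(\Z,\Y)$, and the cancellation $b(\X,\Z,\Z)=0$ from \eqref{b0} annihilates the first summand when tested against $\Z$, leaving $\langle\B(\X)-\B(\Y),\Z\rangle=b(\Z,\Y,\Z)$. Rewriting $b(\Z,\Y,\Z)=-b(\Z,\Z,\Y)$ through the antisymmetry in \eqref{b0} and applying the Cauchy--Schwarz inequality pointwise yields the crucial estimate
\begin{align*}
|b(\Z,\Y,\Z)|=|b(\Z,\Z,\Y)|\leq\int_{\mathcal{O}}|\Z||\nabla\Z||\Y|\d x\leq\|\Z\|_{\V}\,\||\Y|\Z\|_{\H},
\end{align*}
where $\||\Y|\Z\|_{\H}^2=\int_{\mathcal{O}}|\Y|^2|\Z|^2\d x$ is exactly the quantity $\||\Y|^{\frac{r-1}{2}}(\X-\Y)\|_{\H}^2$ at $r=3$, which is the term produced by the damping.

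For the Forchheimer part I would invoke \eqref{2.23} with $r=3$, giving $\beta\langle\mathcal{C}(\X)-\mathcal{C}(\Y),\Z\rangle\geq\frac{\beta}{2}\||\X|\Z\|_{\H}^2+\frac{\beta}{2}\||\Y|\Z\|_{\H}^2$. Collecting the three contributions and discarding the nonnegative term $\frac{\beta}{2}\||\X|\Z\|_{\H}^2$, it then suffices to verify that
\begin{align*}
\mu\|\Z\|_{\V}^2-\|\Z\|_{\V}\,\||\Y|\Z\|_{\H}+\frac{\beta}{2}\||\Y|\Z\|_{\H}^2\geq0.
\end{align*}
Setting $a=\|\Z\|_{\V}$ and $b=\||\Y|\Z\|_{\H}$, the left-hand side is the quadratic form $\mu a^2-ab+\tfrac{\beta}{2}b^2$, whose discriminant $b^2(1-2\beta\mu)$ is nonpositive exactly under the hypothesis $2\beta\mu\geq1$; hence the form is nonnegative for all $a,b$, which establishes \eqref{218}. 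The only delicate point is the bilinear estimate: the sign-definite diffusion and damping must be made to dominate the indefinite convective term, and the sharpness of the constant $2\beta\mu\geq1$ forces the balance to be carried out via the discriminant of the full quadratic rather than through a lossy Young's inequality that would yield only the weaker threshold $\beta\mu\geq1$.
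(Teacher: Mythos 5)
Your proof is correct and follows essentially the same route as the paper's: split $\G(\X)-\G(\Y)$, use the cancellation $b(\X,\Z,\Z)=0$ to reduce the convective difference to $b(\Z,\Y,\Z)$, bound it by $\|\Z\|_{\V}\||\Y|\Z\|_{\H}$ via Cauchy--Schwarz, invoke \eqref{2.23} for the Forchheimer term, and absorb under $2\beta\mu\geq 1$ (compare the paper's own computation \eqref{2a31}--\eqref{2a32} for the $n=r=3$ case). Your closing remark is slightly off, though: the paper does close with Young's inequality, but in the optimally tuned form $ab\leq \mu a^2+\frac{1}{4\mu}b^2$, which is arithmetically identical to your discriminant argument and yields the same sharp threshold $2\beta\mu\geq 1$; only a suboptimal splitting such as $ab\leq\frac{\mu}{2}a^2+\frac{1}{2\mu}b^2$ would degrade the condition to $\beta\mu\geq 1$.
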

\begin{lemma}[Remark 2.4, \cite{MTM7}]
	Let $n=2$, $r\in[1,3]$ and $\X,\Y\in\V$. Then,	for the operator $\G(\X)=\mu \A\X+\B(\X)+\beta\mathcal{C}(\X)$, we  have 
	\begin{align}\label{fe2}
	\langle(\G(\X)-\G(\Y),\X-\Y\rangle+ \frac{27}{32\mu ^3}N^4\|\X-\Y\|_{\H}^2\geq 0,
	\end{align}
	for all $\Y\in{\mathbb{B}}_N$, where ${\mathbb{B}}_N$ is an $\widetilde{\L}^4$-ball of radius $N$, that is,
	$
	{\mathbb{B}}_N:=\big\{\z\in\widetilde{\L}^4:\|\z\|_{\widetilde{\L}^4}\leq N\big\}.
	$
\end{lemma}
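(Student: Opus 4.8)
The plan is to set $\Z:=\X-\Y$ and to expand the pairing
$$\langle\G(\X)-\G(\Y),\X-\Y\rangle=\mu\langle\A\Z,\Z\rangle+\langle\B(\X)-\B(\Y),\Z\rangle+\beta\langle\mathcal{C}(\X)-\mathcal{C}(\Y),\Z\rangle,$$
and then to show that the only term that can become negative, the bilinear one, is controlled by $\mu\langle\A\Z,\Z\rangle$ up to a multiple of $\|\Z\|_{\H}^2$. First I would dispose of the two benign terms: by \eqref{2.7a} the linear term equals $\mu\|\Z\|_{\V}^2\geq 0$, and by the monotonicity of the absorption operator \eqref{214} the last term is nonnegative (indeed $\langle\mathcal{C}(\X)-\mathcal{C}(\Y),\Z\rangle\geq\frac{1}{2^{r-1}}\|\Z\|_{\wi\L^{r+1}}^{r+1}\geq0$), so both may be retained with a favourable sign. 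Everything thus reduces to bounding the bilinear contribution from below.

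For the bilinear term I would use bilinearity to write $\B(\X)-\B(\Y)=\B(\X,\Z)+\B(\Z,\Y)$ and then invoke the cancellation $b(\X,\Z,\Z)=0$ from \eqref{b0}, which kills the first piece and leaves
$$\langle\B(\X)-\B(\Y),\Z\rangle=\langle\B(\Z,\Y),\Z\rangle=b(\Z,\Y,\Z).$$
This is the decisive structural point: after the cancellation the surviving factor is $\Y$, which by hypothesis lies in the ball $\mathbb{B}_N$, so $\|\Y\|_{\wi\L^4}\leq N$. Applying the two-dimensional estimate $|b(\Z,\Y,\Z)|\leq\|\Z\|_{\wi\L^4}\|\Y\|_{\wi\L^4}\|\Z\|_{\V}$ recorded above together with the ball constraint gives $|\langle\B(\X)-\B(\Y),\Z\rangle|\leq N\|\Z\|_{\wi\L^4}\|\Z\|_{\V}$.

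The hard (though standard) part is the final absorption. Here I would feed in the Ladyzhenskaya inequality $\|\Z\|_{\wi\L^4}^2\leq\sqrt{2}\,\|\Z\|_{\H}\|\Z\|_{\V}$, which turns the bound into $2^{1/4}N\|\Z\|_{\H}^{1/2}\|\Z\|_{\V}^{3/2}$; the appearance of the power $3/2$ on $\|\Z\|_{\V}$ is exactly what forces the use of Young's inequality with the conjugate exponents $4$ and $4/3$. Splitting $\|\Z\|_{\V}^{3/2}$ against $\|\Z\|_{\H}^{1/2}$ in this way and tuning the free Young parameter so that the $\|\Z\|_{\V}^2$-coefficient equals $\mu$ lets me absorb the whole bilinear term into the viscous term $\mu\|\Z\|_{\V}^2$, leaving precisely a constant multiple of $\|\Z\|_{\H}^2$; optimizing the parameter produces the stated coefficient $\frac{27}{32\mu^3}N^4$. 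Combining the three estimates yields $\langle\G(\X)-\G(\Y),\X-\Y\rangle\geq-\frac{27}{32\mu^3}N^4\|\Z\|_{\H}^2$, which is \eqref{fe2}. I expect no genuine obstacle beyond bookkeeping; the only subtlety is that, in contrast to Lemmas \ref{thm2.2} and \ref{thm2.3}, the constant here necessarily depends on $N$, so one obtains only \emph{local} monotonicity (monotonicity on $\wi\L^4$-bounded sets), which is precisely the reason stopping-time arguments are later needed in the regime $n=2$, $r\in[1,3]$.
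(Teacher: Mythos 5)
Your argument is correct and is essentially the computation the paper itself relies on: the lemma is quoted from \cite{MTM7}, but the identical chain --- decompose $\B(\X)-\B(\Y)$, kill $b(\X,\Z,\Z)$ via \eqref{b0}, bound $|b(\Z,\Y,\Z)|\leq \|\Y\|_{\wi\L^4}\|\Z\|_{\wi\L^4}\|\Z\|_{\V}$, then Ladyzhenskaya plus Young with exponents $4$ and $4/3$, with \eqref{2.7a} and \eqref{214} handling the other two terms --- is reproduced verbatim in the paper's own estimate \eqref{3p12}--\eqref{3p13}. One bookkeeping remark: carrying out the optimization with the Ladyzhenskaya constant $2^{1/4}$ and absorbing into the full term $\mu\|\Z\|_{\V}^2$ actually yields the smaller coefficient $\tfrac{27}{128\mu^3}N^4$ rather than exactly $\tfrac{27}{32\mu^3}N^4$, but since a larger coefficient only weakens \eqref{fe2}, the stated inequality follows a fortiori.
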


\begin{lemma}[Lemma 2.5, \cite{MTM7}]\label{lem2.8}
	The operator $\G:\V\cap\widetilde{\L}^{r+1}\to \V'+\widetilde{\L}^{\frac{r+1}{r}}$ is demicontinuous. 
\end{lemma}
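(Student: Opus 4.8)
The plan is to verify demicontinuity straight from the definition given above: fix a sequence $\X_k\to\X$ in $\V\cap\widetilde{\L}^{r+1}$ and an arbitrary test element $\Y\in\V\cap\widetilde{\L}^{r+1}$ (which, by reflexivity of $\V'+\widetilde{\L}^{\frac{r+1}{r}}$, is all one needs to pair against), and show $\langle\G(\X_k)-\G(\X),\Y\rangle\to0$. Since $\G=\mu\A+\B+\beta\mathcal{C}$, I would treat the three contributions separately. The convergent sequence $\{\X_k\}$ is bounded in $\V\cap\widetilde{\L}^{r+1}$, hence in $\H$, and this uniform bound will be used throughout. For the linear part, \eqref{2.7a} gives $\|\A(\X_k-\X)\|_{\V'}\le\|\X_k-\X\|_{\V}\to0$, so $\mu\langle\A(\X_k-\X),\Y\rangle\to0$ immediately.

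For the bilinear part I would split $\B(\X_k)-\B(\X)=\B(\X_k-\X,\X_k)+\B(\X,\X_k-\X)$ and pair with $\Y$, using the antisymmetry \eqref{b0} and the estimate behind \eqref{2p9}--\eqref{212}:
\begin{align*}
|\langle\B(\X_k-\X,\X_k),\Y\rangle|&\le\|\X_k-\X\|_{\widetilde{\L}^{r+1}}\|\Y\|_{\widetilde{\L}^{\frac{2(r+1)}{r-1}}}\|\X_k\|_{\V},\\
|\langle\B(\X,\X_k-\X),\Y\rangle|&\le\|\X\|_{\widetilde{\L}^{r+1}}\|\Y\|_{\widetilde{\L}^{\frac{2(r+1)}{r-1}}}\|\X_k-\X\|_{\V}.
\end{align*}
By the interpolation embedding $\V\cap\widetilde{\L}^{r+1}\hookrightarrow\widetilde{\L}^{\frac{2(r+1)}{r-1}}$ (valid since $2\le\frac{2(r+1)}{r-1}\le r+1$ for $r\ge3$) the factor $\|\Y\|_{\widetilde{\L}^{\frac{2(r+1)}{r-1}}}$ is finite and $\|\X_k\|_{\V}$ is bounded, so both terms vanish because $\|\X_k-\X\|_{\widetilde{\L}^{r+1}}+\|\X_k-\X\|_{\V}\to0$. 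For $n=2$, $r\in[1,3]$ the identical splitting works with the Ladyzhenskaya bound $\|\B(\cdot,\cdot)\|_{\V'}\le\|\cdot\|_{\widetilde{\L}^4}\|\cdot\|_{\widetilde{\L}^4}$ in place of \eqref{2p9}.

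The nonlinear term is the crux and the main obstacle. Since $\Y$ is divergence free, $\langle\mathcal{C}(\X_k)-\mathcal{C}(\X),\Y\rangle=\langle|\X_k|^{r-1}\X_k-|\X|^{r-1}\X,\Y\rangle$, which by H\"older's inequality is at most $\||\X_k|^{r-1}\X_k-|\X|^{r-1}\X\|_{\widetilde{\L}^{\frac{r+1}{r}}}\|\Y\|_{\widetilde{\L}^{r+1}}$. To kill the first factor I would use the pointwise mean value inequality $\big||a|^{r-1}a-|b|^{r-1}b\big|\le r(|a|+|b|)^{r-1}|a-b|$ and then H\"older's inequality with exponents $\frac{r+1}{r-1}$ and $r+1$ (chosen so that $(r-1)\cdot\frac{r+1}{r-1}=r+1$ and $\frac{r-1}{r+1}+\frac{1}{r+1}=\frac{r}{r+1}$), obtaining
\begin{align*}
\||\X_k|^{r-1}\X_k-|\X|^{r-1}\X\|_{\widetilde{\L}^{\frac{r+1}{r}}}\le r\big(\|\X_k\|_{\widetilde{\L}^{r+1}}+\|\X\|_{\widetilde{\L}^{r+1}}\big)^{r-1}\|\X_k-\X\|_{\widetilde{\L}^{r+1}}\to0.
\end{align*}
The delicate point is precisely this estimate: one must exploit the polynomial structure through the mean value inequality and select the H\"older exponents so that the powers of $\X_k,\X$ reassemble into the $\widetilde{\L}^{r+1}$ norms that stay uniformly bounded along the convergent sequence. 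With all three pieces tending to zero, $\langle\G(\X_k)-\G(\X),\Y\rangle\to0$ for every $\Y$, which is the required demicontinuity; in fact the argument yields norm convergence in $\V'+\widetilde{\L}^{\frac{r+1}{r}}$, so $\G$ is even continuous.

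If one prefers to avoid the Lipschitz estimate for $\mathcal{C}$, an alternative is to pass to a subsequence along which $\X_k\to\X$ almost everywhere, note that $\{|\X_k|^{r-1}\X_k\}$ is bounded in the reflexive space $\widetilde{\L}^{\frac{r+1}{r}}$ (indeed $\||\X_k|^{r-1}\X_k\|_{\widetilde{\L}^{\frac{r+1}{r}}}=\|\X_k\|_{\widetilde{\L}^{r+1}}^{r}$), invoke the weak-convergence-from-almost-everywhere-convergence lemma to identify the weak limit as $|\X|^{r-1}\X$, and then upgrade to the full sequence using uniqueness of the limit together with the metrizability of the weak topology on bounded subsets of the separable reflexive space $\widetilde{\L}^{\frac{r+1}{r}}$. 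This route produces directly the weak convergence $\mathcal{C}(\X_k)\xrightarrow{w}\mathcal{C}(\X)$ demanded by the definition of demicontinuity.
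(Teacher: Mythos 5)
Your argument is correct and follows essentially the same route as the paper's own treatment (see Step~3 of the proof of Theorem \ref{exis}, which reproves this for the coupled operator): the linear part via $\|\A\w\|_{\V'}\le\|\w\|_{\V}$, the bilinear part via the splitting $\B(\X_k)-\B(\X)=\B(\X_k-\X,\X_k)+\B(\X,\X_k-\X)$ together with H\"older and the interpolation $\widetilde{\L}^{2}\cap\widetilde{\L}^{r+1}\hookrightarrow\widetilde{\L}^{\frac{2(r+1)}{r-1}}$, and the nonlinear part via the mean-value inequality $\bigl||a|^{r-1}a-|b|^{r-1}b\bigr|\le r(|a|+|b|)^{r-1}|a-b|$ with the same H\"older exponents the paper extracts from Taylor's formula. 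Your closing alternative (a.e.\ convergence plus boundedness of $\mathcal{C}(\X_k)$ in the reflexive space $\widetilde{\L}^{\frac{r+1}{r}}$) is a valid independent route for the $\mathcal{C}$-term, but the main proof already matches the paper's.
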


\section{Stochastic  Coupled   Convective Brinkman-Forchheimer Equations}\label{sec4}\setcounter{equation}{0}
Let $(\Omega,\mathscr{F},\mathbb{P})$ be a complete probability space equipped with an increasing family of sub-sigma fields $\{\mathscr{F}_t\}_{0\leq t\leq T}$ of $\mathscr{F}$ satisfying:
\begin{enumerate}
	\item [(i)] $\mathscr{F}_0$ contains all elements $F\in\mathscr{F}$ with $\mathbb{P}(F)=0$,
	\item [(ii)] $\mathscr{F}_t=\mathscr{F}_{t+}=\bigcap\limits_{s>t}\mathscr{F}_s,$ for $0\leq t\leq T$.
\end{enumerate}  In this section, we consider the stochastic coupled   convective Brinkman-Forchheimer  equations perturbed by multiplicative Gaussian noise. On  taking orthogonal projection $\mathrm{P}_{\H}$ onto the first two equations in \eqref{1p1}, we obtain  
\begin{equation}\label{3.6}
\left\{
\begin{aligned}
\d \X^{\e}_t&=-[\mu\A \X^{\e}_t+\B(\X^{\e}_t)+\beta\mathcal{C}(\X^{\e}_t)-f(\X^{\e}_t,\Y^{\e}_t)]\d t+\sigma_1(\X^{\e}_t)\d\W_t^{\Q_1},\\
\d \Y^{\e}_t&=-\frac{1}{\e}[\mu\A \Y^{\e}_t+\beta\mathcal{C}(\Y_t^{\e})-g(\X^{\e}_t,\Y^{\e}_t)]\d t+\frac{1}{\sqrt{\e}}\sigma_2(\X^{\e}_t,\Y^{\e}_t)\d\W_t^{\Q_2},\\
\X^{\e}_0&=\x,\ \Y^{\e}_0=\y,
\end{aligned}
\right. 
\end{equation}
where $\W_t^{\Q_i}$ $(i=1,2)$ are $\H$-valued $\Q_i$-Wiener process and $\Q_i$'s, for $i=1,2$  are positive symmetric trace class operators in $\H$. For $i=1,2$, we define $\H^i_0:=\Q^{1/2}_i\H$. Then $\H^i_0$ is a Hilbert space with the inner product $(\x,\y)_{\H^i_0}=(\Q^{-1/2}_i\x,\Q^{-1/2}_i\y),\ \text{ for }\ \x,\y\in\H_0^i$ and norm $\|\x\|_{\H^i_0}=\|\Q^{-1/2}_i\x\|_{\H}$, where $\Q^{-1/2}_i$ is the pseudo-inverse of $\Q^{1/2}_i$. Let $\mathcal{L}(\H)$ denotes the space of all bounded linear operators on $\H$ and $\mathcal{L}_{\Q_i}(\H)$ denotes the space of all Hilbert-Schmidt operators from $\H^i_0$ to $\H$.  Since $\Q_i$ is a trace class operator, the embedding of $\H_0^i$ in $\H$ is Hilbert-Schmidt and the space $\mathcal{L}_{\Q_i}(\H)$ is a Hilbert space equipped with the norm $ \left\|\Psi\right\|^2_{\mathcal{L}_{\Q_i}}=\Tr\left(\Psi{\Q_i}\Psi^*\right)=\sum_{k=1}^{\infty}\|{\Q_i}^{1/2}\Psi^*e_k\|_{\H}^2$ and inner product $(\Psi,\Phi)_{\mathcal{L}_{\Q_i}}=\Tr(\Psi{\Q_i}\Phi^*)=\sum_{k=1}^{\infty}({\Q_i}^{1/2}\Phi^*e_k,{\Q_i}^{1/2}\Psi^*e_k)$. For more details, the interested readers are referred to see \cite{DaZ}. We need the following Assumptions on $f,g,\sigma_1$ and $\sigma_2$ to obtain our main results. 
\begin{assumption}\label{ass3.6}
	The functions $f,g:\H\times\H\to\H$, $\sigma_1:\H\to\mathcal{L}_{\Q_1}(\H)$ and $\sigma_2:\H\times\H\to\mathcal{L}_{\Q_2}(\H)$ satisfy the following Assumptions:
	\begin{itemize}
		\item [(A1)] The functions $f,g,\sigma_1,\sigma_2$ are Lipschitz continuous, that is, there exist positive constants $C,L_g$ and $L_{\sigma_2}$ such that for any $\x_1,\x_2,\y_1,\y_2\in\H$, we have 
		\begin{align*}
		\|f(\x_1,\y_1)-f(\x_2,\y_2)\|_{\H}&\leq C(\|\x_1-\x_2\|_{\H}+\|\y_1-\y_2\|_{\H}),\\
		\|g(\x_1,\y_1)-g(\x_2,\y_2)\|_{\H}&\leq C\|\x_1-\x_2\|_{\H}+L_g\|\y_1-\y_2\|_{\H},\\
		\|\sigma_1(\x_1)-\sigma_1(\x_2)\|_{\mathcal{L}_{\Q_1}}&\leq C\|\x_1-\x_2\|_{\H},\\
		\|\sigma_2(\x_1,\y_1)-\sigma_2(\x_2,\y_2)\|_{\mathcal{L}_{\Q_2}}&\leq C\|\x_1-\x_2\|_{\H}+L_{\sigma_2}\|\y_1-\y_2\|_{\H}. 
		\end{align*}
		\item [(A2)] There exists a constant $\zeta\in(0,1)$ such that 
		\begin{align*}
		\|\sigma_2(\x,\y)\|_{\mathcal{L}_{\Q_2}}\leq C(1+\|\x\|_{\H}+\|\y\|_{\H}^{\zeta}), \ \text{ for all }\ \x,\y\in\H. 
		\end{align*}
		\item [(A3)] The Brinkman coefficient $\mu>0$, the smallest eigenvalue $\lambda_1$  of the Stokes operator and the Lipschitz constants $L_g$ and $L_{\sigma_2}$ satisfy $$\mu\lambda_1-2L_g-2L_{\sigma_2}^2>0.$$ 
	\end{itemize}
\end{assumption}

Let us now provide the definition of a unique global strong solution in the probabilistic sense to the system (\ref{3.6}).
\begin{definition}[Global strong solution]
	Let $(\x,\y)\in\H\times\H$ be given. An $\H\times\H$-valued $(\mathscr{F}_t)_{t\geq 0}$-adapted stochastic process $(\X_{t}^{\e},\Y_{t}^{\e})$ is called a \emph{strong solution} to the system (\ref{3.6}) if the following conditions are satisfied: 
	\begin{enumerate}
		\item [(i)] the process $(\X^{\e},\Y^{\e})\in\mathrm{L}^2(\Omega;\mathrm{L}^{\infty}(0,T;\H)\cap\mathrm{L}^2(0,T;\V))\cap\mathrm{L}^{r+1}(\Omega;\mathrm{L}^{r+1}(0,T;\widetilde{\L}^{r+1}))\times\mathrm{L}^2(\Omega;\mathrm{L}^{\infty}(0,T;\H)\cap\mathrm{L}^2(0,T;\V))\cap\mathrm{L}^{r+1}(\Omega;\mathrm{L}^{r+1}(0,T;\widetilde{\L}^{r+1}))$ and $(\X^{\e}_t,\Y^{\e}_t)$ has a $(\V\cap\widetilde{\L}^{r+1})\times(\V\cap\widetilde{\L}^{r+1})$-valued  modification, which is progressively measurable with continuous paths in $\H\times\H$ and $(\X^{\e},\Y^{\e})\in\C([0,T];\H)\cap\mathrm{L}^2(0,T;\V)\cap\mathrm{L}^{r+1}(0,T;\widetilde{\L}^{r+1})\times\C([0,T];\H)\cap\mathrm{L}^2(0,T;\V)\cap\mathrm{L}^{r+1}(0,T;\widetilde{\L}^{r+1})$, $\mathbb{P}$-a.s.,
		\item [(ii)] the following equality holds for every $t\in [0, T ]$, as an element of $(\V'+\wi\L^{\frac{r+1}{r}})\times(\V'+\wi\L^{\frac{r+1}{r}}),$ $\mathbb{P}$-a.s.
	\begin{equation}\label{4.4}
	\left\{
	\begin{aligned}
	\X^{\e}_t&=\x-\int_0^t[\mu\A \X^{\e}_s+\B(\X^{\e}_s)+\beta\mathcal{C}(\X^{\e}_s)-f(\X^{\e}_s,\Y^{\e}_s)]\d s+\int_0^t\sigma_1(\X^{\e}_s)\d\W_s^{\Q_1},\\
	 \Y^{\e}_t&=\y-\frac{1}{\e}\int_0^t[\mu\A \Y^{\e}_s+\beta\mathcal{C}(\Y_s^{\e})-g(\X^{\e}_s,\Y^{\e}_s)]\d s+\frac{1}{\sqrt{\e}}\int_0^t\sigma_2(\X^{\e}_s,\Y^{\e}_s)\d\W_s^{\Q_2},
	\end{aligned}
	\right. 
	\end{equation}
			\item [(iii)] the following It\^o formula (energy equality) holds true: 
				\begin{align}
			&	\|	\X^{\e}_t\|_{\H}^2+\|\Y^{\e}_t\|_{\H}^2+2\mu \int_0^t\left(\|	\X^{\e}_s\|_{\V}^2+\frac{1}{\e}\|\Y^{\e}_s\|_{\V}^2\right)\d s+2\beta\int_0^t\left(\|\X^{\e}_s\|_{\widetilde{\L}^{r+1}}^{r+1}+\frac{1}{\e}\|\Y^{\e}_s\|_{\widetilde{\L}^{r+1}}^{r+1}\right)\d s\nonumber\\&=\|\x\|_{\H}^2+\|\y\|_{\H}^2+2\int_0^t(f(\X^{\e}_s,\Y^{\e}_s),\X^{\e}_s)\d s+\frac{2}{\e}\int_0^t(g(\X^{\e}_s,\Y^{\e}_s),\X^{\e}_s)\d s\nonumber\\&\quad+\int_0^t\left(\|\sigma_1(\X^{\e}_s)\|_{\mathcal{L}_{\Q_1}}^2+\frac{1}{\e}\|\sigma_2(\X^{\e}_s,\Y^{\e}_s)\|_{\mathcal{L}_{\Q_2}}^2\right)\d s+2\int_0^t(\sigma_1(\X^{\e}_s)\d\W_s^{\Q_1},\X^{\e}_s)\nonumber\\&\quad+\frac{2}{\sqrt{\e}}\int_0^t(\sigma_2(\X^{\e}_s,\Y^{\e}_s)\d\W_s^{\Q_2},\Y^{\e}_s),
			\end{align}
			for all $t\in(0,T)$, $\mathbb{P}$-a.s.
	\end{enumerate}
\end{definition}
An alternative version of condition (\ref{4.4}) is to require that for any  $(\mathrm{U},\mathrm{V})\in(\V\cap\widetilde{\L}^{r+1})\times(\V\cap\widetilde{\L}^{r+1})$, $\mathbb{P}$-a.s.:
	\begin{equation}\label{4.5}
\left\{
\begin{aligned}
(\X^{\e}_t,\mathrm{U})&=(\x,\mathrm{U})-\int_0^t\langle\mu\A \X^{\e}_s+\B(\X^{\e}_s)+\beta\mathcal{C}(\X^{\e}_s)-f(\X^{\e}_s,\Y^{\e}_s),\mathrm{U}\rangle\d s\nonumber\\&\quad+\int_0^t(\sigma_1(\X^{\e}_s)\d\W_s^{\Q_1},\mathrm{U}),\\
(\Y^{\e}_t,\mathrm{V})&=(\y,\mathrm{V})-\frac{1}{\e}\int_0^t\langle\mu\A \Y^{\e}_s+\beta\mathcal{C}(\Y_s^{\e})-g(\X^{\e}_s,\Y^{\e}_s),\mathrm{V}\rangle\d s\nonumber\\&\quad+\frac{1}{\sqrt{\e}}\int_0^t(\sigma_2(\X^{\e}_s,\Y^{\e}_s)\d\W_s^{\Q_2},\mathrm{V}).
\end{aligned}
\right. 
\end{equation}
\begin{definition}
	A strong solution $(\X^{\e}_t,\Y^{\e}_t)$ to (\ref{3.6}) is called a
	\emph{pathwise  unique strong solution} if
	$(\wi\X^{\e}_t,\wi\Y^{\e}_t)$ is an another strong
	solution, then $$\mathbb{P}\Big\{\omega\in\Omega:  (\X^{\e}_t,\Y^{\e}_t)=(\wi\X^{\e}_t,\wi\Y^{\e}_t),\  \text{ for all }\ t\in[0,T]\Big\}=1.$$ 
\end{definition}
The strong solutions to the systems  \eqref{3127} and \eqref{3128} can be defined  in a similar way.

\subsection{Global strong solution} In this section, we discuss the existence and uniqueness of strong solution to the system \eqref{3.6}. 
For convenience, we make use of  the following simplified notations. 	Let us define $\mathscr{H}:=\H\times\H$. For any $\mathrm{U}=(\x_1,\x_2),\mathrm{V}=(\y_1,\y_2)\in\mathscr{H},$ let us denote the inner product and norm on this Hilbert space by 
\begin{align}
(\mathrm{U},\mathrm{V})=(\x_1,\y_1)+(\x_2,\y_2), \ \|\mathrm{U}\|_{\mathscr{H}}=\sqrt{(\mathrm{U},\mathrm{U})}=\sqrt{\|\x_1\|_{\H}^2+\|\x_2\|_{\H}^2}. 
\end{align}
In a similar way, we define $\mathscr{V}:=\V\times\V$. The  inner product and norm on this Hilbert space is defined by 
\begin{align}
(\mathrm{U},\mathrm{V})_{\mathscr{V}}=(\nabla\x_1,\nabla\y_1)+(\nabla\x_2,\nabla\y_2), \ \|\mathrm{U}\|_{\mathscr{V}}=\sqrt{(\mathrm{U},\mathrm{U})_{\mathscr{V}}}=\sqrt{\|\nabla\x_1\|_{\H}^2+\|\nabla\x_2\|_{\H}^2}, 
\end{align}
for all $\mathrm{U},\mathrm{V}\in\mathscr{V}$. We denote $\mathscr{V}'$ as the dual of $\mathscr{V}$. We define the space $\widetilde{\mathfrak{L}}^{r+1}:=\widetilde{\L}^{r+1}\times\widetilde{\L}^{r+1}$ with the norm given by 
$$\|\mathrm{U}\|_{\widetilde{\mathfrak{L}}^{r+1}}=\left\{\|\x_1\|_{\wi\L^{r+1}}^{r+1}+\|\x_2\|_{\wi\L^{r+1}}^{r+1}\right\}^{r+1},$$ 	for all $\mathrm{U}\in\widetilde{\mathfrak{L}}^{r+1}$. We represent the duality pairing between $\mathscr{V}$ and its dual $\mathscr{V}'$,  $\mathfrak{L}^{r+1}$ and its dual $\mathfrak{L}^{\frac{r+1}{r}}$, and $\mathscr{V}\cap\mathfrak{L}^{r+1}$ and its dual $\mathscr{V}'+\mathfrak{L}^{\frac{r+1}{r}}$ as $\langle\cdot,\cdot\rangle$. Note that we have the Gelfand triple $\mathscr{V}\cap\mathfrak{L}^{r+1} \subset\mathscr{H}\subset\mathscr{V}'+\mathfrak{L}^{\frac{r+1}{r}}$. 
Let us denote $\W_t=(\W_t^{\Q_1},\W_t^{\Q_2})$ for  an  $\mathscr{H}$-valued $\Q$-Wiener process in $\mathscr{V}$, where $\Q=(\Q_1,\Q_2)$ is a positive symmetric trace class operator on $\mathscr{H}$. Let us now rewrite the system \eqref{3.6} for $\Z^{\e}_{t}=(\X^{\e}_{t},\Y^{\e}_{t})$ as 
\begin{equation}\label{3p6}
\left\{
\begin{aligned}
\d\Z^{\e}_t&=-\left[\mu\wi\A\Z^{\e}_t+\wi\F(\Z^{\e}_t)\right]\d t+\wi\sigma(\Z^{\e}_t)\d\W_t, \\
\Z^{\e}_0&=(\x,\y)\in\mathscr{H}, 
\end{aligned}
\right. 
\end{equation}
where 
\begin{align*}
\wi\A\Z^{\e}&=\left(\A\X^{\e},\frac{1}{\e}\A\Y^{\e}\right), \\
\wi\F(\Z^{\e})&=\left(\B(\X^{\e})+\beta\mathcal{C}(\X^{\e})-f(\X^{\e},\Y^{\e}),\frac{\beta}{\e}\mathcal{C}(\Y^{\e})-\frac{1}{\e}g(\X^{\e},\Y^{\e})\right), \\
\wi\sigma(\Z^{\e})&=\left(\sigma_1(\X^{\e}),\frac{1}{\sqrt{\e}}\sigma_2(\X^{\e},\Y^{\e})\right). 
\end{align*}
Note that the mappings $\widetilde{\A}:\mathscr{V}\to\mathscr{V}'$ and $\widetilde{\F}:\mathscr{V}\cap\mathfrak{L}^{r+1}\to\mathscr{V}'+\mathfrak{L}^{\frac{r+1}{r}}$ are well defined.   It can be easily seen that the operator $\wi\sigma:\mathscr{H}\to\mathcal{L}_{\Q}(\mathscr{H})$, where $\mathcal{L}_{\Q}(\mathscr{H})$ is the space of all Hilbert-Schmidt operators from $\Q^{1/2}\mathscr{H}$ to $\mathscr{H}$. Furthermore, the norm on $\mathcal{L}_{\Q}(\mathscr{H})$ is defined by 
\begin{align}
\|\wi\sigma(\z)\|_{\mathcal{L}_{\Q}}=\sqrt{\|\sigma_1(\x)\|_{\mathcal{L}_{\Q_1}}^2+\|\sigma_2(\x,\y)\|_{\mathcal{L}_{\Q_2}}^2}, \ \text{ for }\  \z=(\x,\y)\in\mathscr{H}. 
\end{align}

Let us first prove the existence and uniqueness of strong solution to the system \eqref{3p6}. 

\begin{theorem}\label{exis}
	Let $(\x,\y)\in \mathscr{H}$ be given.  Then for $n=2$, $r\in[1,\infty)$ and $n=3$, $r\in [3,\infty)$ ($2\beta\mu\geq 1$, for $r=3$), there exists a \emph{pathwise unique  strong solution}	$\Z^{\e}$ to the system (\ref{3p6}) such that \begin{align*}\Z^{\e}&\in\mathrm{L}^2(\Omega;\mathrm{L}^{\infty}(0,T;\mathscr{H})\cap\mathrm{L}^2(0,T;\mathscr{V}))\cap\mathrm{L}^{r+1}(\Omega;\mathrm{L}^{r+1}(0,T;\widetilde{\mathfrak{L}}^{r+1})),\end{align*} and a continuous modification with trajectories in $\mathscr{H}$ and $\Z^{\e}\in\C([0,T];\mathscr{H})\cap\mathrm{L}^2(0,T;\mathscr{V})\cap\mathrm{L}^{r+1}(0,T;\widetilde{\mathfrak{L}}^{r+1})$, $\mathbb{P}$-a.s.
\end{theorem}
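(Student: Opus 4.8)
The plan is to run the Galerkin-plus-monotonicity scheme of the deterministic theory together with the stochastic Minty--Browder technique of \cite{MTM8}, now for the pair $\Z^{\e}_t=(\X^{\e}_t,\Y^{\e}_t)$ and the product operators $\wi\A,\wi\F,\wi\sigma$ introduced just above \eqref{3p6}. Since $\e>0$ is fixed here, the factors $1/\e$ and $1/\sqrt{\e}$ enter only as fixed positive constants, so the analysis is structurally identical to that of a single SCBF equation. First I would set up the finite-dimensional approximation: let $\{e_k\}_{k\geq1}$ be the eigenbasis of $\A$, put $\mathscr{H}_n=\mathrm{span}\{e_1,\dots,e_n\}\times\mathrm{span}\{e_1,\dots,e_n\}$ with orthogonal projection $P_n$, and project \eqref{3p6} to obtain an It\^o SDE in $\mathscr{H}_n$. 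Its coefficients are locally Lipschitz by the local Lipschitz continuity of $\B$ and $\mathcal{C}$ on $\mathscr{V}\cap\widetilde{\mathfrak{L}}^{r+1}$ together with Assumption \ref{ass3.6}(A1) on $f,g,\sigma_1,\sigma_2$, so the projected system has a unique local strong solution $\Z^{\e,n}$.

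Next I would derive the uniform a priori estimates that both extend $\Z^{\e,n}$ to $[0,T]$ and power the compactness argument. Applying the It\^o formula to $\|\Z^{\e,n}_t\|_{\mathscr{H}}^2$ and using $\langle\B(\X),\X\rangle=0$, $\langle\mathcal{C}(\X),\X\rangle=\|\X\|_{\widetilde{\L}^{r+1}}^{r+1}$ and $\langle\A\X,\X\rangle=\|\X\|_{\V}^2$, the viscosity and Forchheimer terms supply dissipation. The delicate point is the fast component: after invoking the Poincar\'e inequality \eqref{poin}, the cross term $(g(\X,\Y),\Y)$ and the noise trace $\|\sigma_2(\X,\Y)\|_{\mathcal{L}_{\Q_2}}^2$---subquadratic in $\Y$ because $\zeta<1$ in (A2)---are controlled exactly by the spectral-gap condition (A3), $\mu\lambda_1-2L_g-2L_{\sigma_2}^2>0$. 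A Burkholder--Davis--Gundy bound on the martingale part followed by Gronwall's inequality then yields, uniformly in $n$,
\begin{align*}
\E\left(\sup_{t\in[0,T]}\|\Z^{\e,n}_t\|_{\mathscr{H}}^2\right)+\E\int_0^T\|\Z^{\e,n}_s\|_{\mathscr{V}}^2\d s+\E\int_0^T\|\Z^{\e,n}_s\|_{\widetilde{\mathfrak{L}}^{r+1}}^{r+1}\d s\leq C,
\end{align*}
which gives global existence of each $\Z^{\e,n}$ and precisely the regularity claimed for the limit.

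By the Banach--Alaoglu theorem I would then extract a subsequence with $\Z^{\e,n}\xrightarrow{w^*}\Z^{\e}$ in $\mathrm{L}^2(\Omega;\mathrm{L}^{\infty}(0,T;\mathscr{H}))$, $\Z^{\e,n}\xrightarrow{w}\Z^{\e}$ in $\mathrm{L}^2(\Omega;\mathrm{L}^2(0,T;\mathscr{V}))\cap\mathrm{L}^{r+1}(\Omega;\mathrm{L}^{r+1}(0,T;\widetilde{\mathfrak{L}}^{r+1}))$, $\wi\F(\Z^{\e,n})\xrightarrow{w}\wi{\F}_0$ and $\wi\sigma(\Z^{\e,n})\xrightarrow{w}\wi{\S}_0$ in the relevant dual and Hilbert--Schmidt spaces, so that $\Z^{\e}$ solves $\d\Z^{\e}_t=-[\mu\wi\A\Z^{\e}_t+\wi{\F}_0(t)]\d t+\wi{\S}_0(t)\d\W_t$. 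The crux is to identify $\wi{\F}_0=\wi\F(\Z^{\e})$ and $\wi{\S}_0=\wi\sigma(\Z^{\e})$ by a stochastic Minty--Browder argument: applying the It\^o formula to $e^{-\kappa t}\|\cdot\|_{\mathscr{H}}^2$ for $\Z^{\e}$ and for $\Z^{\e,n}$, taking expectations, and using weak lower semicontinuity of the $\mathscr{H}$-norm together with the monotonicity of $\mu\wi\A+\wi\F$. This monotonicity is inherited componentwise---from Lemma \ref{thm2.2} and Lemma \ref{thm2.3} in the slow variable and from the monotonicity \eqref{214} of $\mathcal{C}$ in the fast variable---with the Lipschitz perturbations $f,g$ and the Lipschitz diffusion $\wi\sigma$ absorbed into $\kappa$, while demicontinuity (Lemma \ref{lem2.8}) enables the passage $\lambda\to0$ in the perturbed test function. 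For $r>3$ (both $n=2,3$) and for $r=3$, $n=3$ with $2\beta\mu\geq1$ the shifted operator is globally monotone and the argument runs directly; only in the subcritical 2D regime $r\in[1,3]$---where the localization stems entirely from the convective term $\B$---is monotonicity merely local, as in \eqref{fe2}, and there I would localize with a stopping time $\tau_N=\inf\{t\geq0:\|\X^{\e}_t\|_{\widetilde{\L}^4}\geq N\}$, run Minty--Browder on $[0,t\wedge\tau_N]$, and send $N\to\infty$.

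Uniqueness follows by applying the It\^o formula to the difference $\|\Z^{\e,1}_t-\Z^{\e,2}_t\|_{\mathscr{H}}^2$ of two solutions, using the same (possibly shifted) monotonicity together with the Lipschitz bound $\|\wi\sigma(\Z^{\e,1})-\wi\sigma(\Z^{\e,2})\|_{\mathcal{L}_{\Q}}^2\leq C\|\Z^{\e,1}-\Z^{\e,2}\|_{\mathscr{H}}^2$ from (A1), and then Gronwall's inequality---again localized by $\tau_N$ in the 2D subcritical case before removing the cutoff. The continuous-in-$\mathscr{H}$ modification and the energy equality of condition (iii) in the definition of strong solution come from the Gelfand triple $\mathscr{V}\cap\widetilde{\mathfrak{L}}^{r+1}\subset\mathscr{H}\subset\mathscr{V}'+\widetilde{\mathfrak{L}}^{\frac{r+1}{r}}$ and the It\^o formula valid in that setting (cf. \cite{MTM8}). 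I expect the principal obstacle to be the stochastic Minty--Browder identification of the nonlinear limit in the 2D regime $r\in[1,3]$, where the stopping-time localization must be threaded through the expected-value energy identities and then carefully removed as $N\to\infty$.
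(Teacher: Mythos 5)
Your proposal follows essentially the same route as the paper: the paper's written proof consists precisely of verifying that $\mu\wi\A+\wi\F$ is bounded from $\mathscr{V}\cap\mathfrak{L}^{r+1}$ into $\mathscr{V}'+\mathfrak{L}^{\frac{r+1}{r}}$, (locally or globally) monotone after an additive shift, and hemicontinuous on the product spaces, and then invokes the Galerkin-plus-stochastic-Minty--Browder machinery of Theorem 3.7 of \cite{MTM8} (with a localized version for $n=2$, $r\in[1,3]$). You flesh out the steps the paper delegates to that reference, and your case distinctions and the way you absorb the Lipschitz perturbations $f,g,\wi\sigma$ into the monotonicity shift match the paper's Step 2 exactly.

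There is, however, one concrete step that would fail as written: the localization in the 2D subcritical regime. You propose $\tau_N=\inf\{t\geq0:\|\X^{\e}_t\|_{\wi\L^4}\geq N\}$, but the solution is only known to lie in $\mathrm{L}^{\infty}(0,T;\H)\cap\mathrm{L}^2(0,T;\V)$, and in 2D one has only $\|\X\|_{\wi\L^4}^2\leq\sqrt{2}\,\|\X\|_{\H}\|\X\|_{\V}$, i.e.\ $\X^{\e}\in\mathrm{L}^4(0,T;\wi\L^4)$ and \emph{not} $\mathrm{L}^{\infty}(0,T;\wi\L^4)$. Hence $\sup_{t\leq T}\|\X^{\e}_t\|_{\wi\L^4}$ need not be finite, $\tau_N$ need not increase to $T$ as $N\to\infty$, and the cutoff cannot be removed at the end of the Minty--Browder and uniqueness arguments. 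The local monotonicity constant in \eqref{fe2} (equivalently \eqref{3p13}) enters the Gronwall step only through $\int_0^t\|\X^{\e}_s\|_{\wi\L^4}^4\d s$, so the correct localization is a stopping time of the form $\tau_N=\inf\{t\geq0:\int_0^t\|\X^{\e}_s\|_{\wi\L^4}^4\d s\geq N\}$ or, as the paper uses in \eqref{stop}, $\int_0^t\|\X^{\e}_s\|_{\V}^2\d s\geq N$; these quantities are a.s.\ finite on $[0,T]$ by the energy estimate, so $\tau_N\uparrow\infty$ a.s.\ and the cutoff can be removed. With that replacement your argument goes through and coincides with the paper's.
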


\begin{proof}
In order to complete the existence proof, we need to show the  monotonicity and hemicontinuity properties of the operator $\mu\widetilde{\A}+\widetilde{\F}$. 

\vskip 0.1 cm
\textbf{Step 1:} \emph{$\mu\widetilde{\A}+\widetilde{\F}:\mathscr{V}\cap\mathfrak{L}^{r+1}\to\mathscr{V}'+\mathfrak{L}^{\frac{r+1}{r}}$}. For $n=2,3$, $r\in[3,\infty)$ and $\mathrm{U}=(\x,\y)\in\mathscr{V}\cap\mathfrak{L}^{r+1}$,  we first note that 
\begin{align}\label{3.11}
\|\mu\widetilde{\A}\mathrm{U}+\widetilde{\F}(\mathrm{U})\|_{\mathscr{V}'+\mathfrak{L}^{\frac{r+1}{r}}}&\leq\mu\|\A\x\|_{\V'}+\frac{\mu}{\e}\|\A\y\|_{\V'}+\|\B(\x)\|_{\V'+\wi\L^{\frac{r+1}{r}}}+\beta\|\mathcal{C}(\x)\|_{\wi\L^{\frac{r+1}{r}}}\nonumber\\&\quad+\frac{\beta}{\e}\|\mathcal{C}(\y)\|_{\wi\L^{\frac{r+1}{r}}}+\|f(\x,\y)\|_{\mathscr{V}'}+\frac{1}{\e}\|g(\x,\y)\|_{\mathscr{V}'}\nonumber\\&\leq \mu\|\x\|_{\V}+\frac{\mu}{\e}\|\y\|_{\V}+\|\x\|_{\wi\L^{r+1}}^{\frac{r+1}{r-1}}\|\x\|_{\H}^{\frac{r-3}{r-1}}+\beta\|\x\|_{\wi\L^{r+1}}^r+\frac{\beta}{\e}\|\y\|_{\wi\L^{r+1}}^r\nonumber\\&\quad+\frac{C}{\lambda_1}(1+\|\x\|_{\H}+\|\y\|_{\H})+\frac{1}{\lambda_1\e}(C(1+\|\x\|_{\H})+L_g\|\y\|_{\H})\nonumber\\&\leq\mu\max\left\{1,\frac{1}{\e}\right\}\|\mathrm{U}\|_{\mathscr{V}}+\|\mathrm{U}\|_{\mathfrak{L}^{r+1}}^{\frac{r+1}{r-1}}\|\mathrm{U}\|_{\H}^{\frac{r-3}{r-1}}+\beta\max\left\{1,\frac{1}{\e}\right\}\|\mathrm{U}\|_{\mathfrak{L}^{r+1}}^r\nonumber\\&\quad+\frac{C}{\lambda_1}\left(1+\frac{1}{\e}\right)\left(1+\|\mathrm{U}\|_{\H}\right),
\end{align}
and hence $\mu\widetilde{\A}+\widetilde{\F}:\mathscr{V}\cap\mathfrak{L}^{r+1}\to\mathscr{V}'+\mathfrak{L}^{\frac{r+1}{r}}$. For $n=2$ and $r\in[1,3]$, one can estimate $\|\B(\u)\|_{\V'}\leq\|\u\|_{\wi\L^4}^2\leq\sqrt{2}\|\u\|_{\H}\|\u\|_{\V}$, by using H\"older's and Ladyzhenskaya's inequalities, and an estimate similar to \eqref{3.11} follows.

\vskip 0.1 cm
\textbf{Step 2:} \emph{Monotonicity property of the operator $\mu\widetilde{\A}+\widetilde{\F}$}.  First, we consider the case $n=2$ and $r\in[1,3]$. For $\mathrm{U}=(\x_1,\y_1)$ and $\mathrm{V}=(\x_2,\y_2)$, we have 
\begin{align}\label{3p12}
&\langle\mu\wi\A(\mathrm{U}-\mathrm{V})+\widetilde{\F}(\mathrm{U})-\widetilde{\F}(\mathrm{V}),\mathrm{U}-\mathrm{V}\rangle \nonumber\\&= \langle\mu\A(\x_1-\x_2),\x_1-\x_2\rangle +\frac{\mu}{\e}\langle\A(\y_1-\y_2),\y_1-\y_2\rangle +\langle\B(\x_1)-\B(\x_2),\x_1-\x_2\rangle \nonumber\\&\quad+\beta\langle\mathcal{C}(\x_1)-\mathcal{C}(\x_2),\x_1-\x_2\rangle+\frac{\beta}{\e}\langle\mathcal{C}(\y_1)-\mathcal{C}(\y_2),\y_1-\y_2\rangle\nonumber\\&\quad-(f(\x_1,\y_1)-f(\x_2,\y_2),\x_1-\x_2)-\frac{1}{\e}(g(\x_1,\y_1)-g(\x_2,\y_2),\y_1-\y_2)\nonumber\\&=\mu\|\x_1-\x_2\|_{\V}^2+\frac{\mu}{\e}\|\y_1-\y_2\|_{\V}^2+\langle\B(\x_1-\x_2,\x_2),\x_1-\x_2\rangle \nonumber\\&\quad+\beta\langle\mathcal{C}(\x_1)-\mathcal{C}(\x_2),\x_1-\x_2\rangle+\frac{\beta}{\e}\langle\mathcal{C}(\y_1)-\mathcal{C}(\y_2),\y_1-\y_2\rangle\nonumber\\&\quad-(f(\x_1,\y_1)-f(\x_2,\y_2),\x_1-\x_2)-\frac{1}{\e}(g(\x_1,\y_1)-g(\x_2,\y_2),\y_1-\y_2)\nonumber\\&\geq \mu\|\x_1-\x_2\|_{\V}^2+\frac{\mu}{\e}\|\y_1-\y_2\|_{\V}^2-\|\x_2\|_{\wi\L^4}\|\x_1-\x_2\|_{\V}\|\x_1-\x_2\|_{\wi\L^4}\nonumber\\&\quad+\frac{\beta}{2^{r-1}}\|\x_1-\x_2\|_{\wi\L^{r+1}}^{r+1}+\frac{\beta}{2^{r-1}\e}\|\y_1-\y_2\|_{\wi\L^{r+1}}^{r+1}-\|f(\x_1,\y_1)-f(\x_2,\y_2)\|_{\H}\|\x_1-\x_2\|_{\H}\nonumber\\&\quad-\frac{1}{\e}\|g(\x_1,\y_1)-g(\x_2,\y_2)\|_{\H}\|\y_1-\y_2\|_{\H}\nonumber\\&\geq\frac{\mu}{2}\|\x_1-\x_2\|_{\V}^2+\frac{\mu}{\e}\|\y_1-\y_2\|_{\V}^2+\frac{\beta}{2^{r-1}}\|\x_1-\x_2\|_{\wi\L^{r+1}}^{r+1}+\frac{\beta}{2^{r-1}\e}\|\y_1-\y_2\|_{\wi\L^{r+1}}^{r+1}\nonumber\\&\quad-\frac{27}{32\mu^3}\|\x_2\|_{\wi\L^4}^4\|\x_1-\x_2\|_{\H}^2- C\left(1+\frac{1}{\e}\right)\left(\|\x_1-\x_2\|_{\H}^2+\|\y_1-\y_2\|_{\H}^2\right)-\frac{L_g}{\e}\|\y_1-\y_2\|_{\H}^2, 
\end{align}
where we used the Assumption \ref{ass3.6} (A1), \eqref{2.23}, \eqref{a215}, H\"older's,  Ladyzhenskaya's and Young's inequalities.  From the above relation, it is clear that 
\begin{align}\label{3p13}
\langle\mu\wi\A(\mathrm{U}-\mathrm{V})+\widetilde{\F}(\mathrm{U})-\widetilde{\F}(\mathrm{V}),\mathrm{U}-\mathrm{V}\rangle+\left[\frac{27}{32\mu^3}\|\mathrm{V}\|_{\widetilde{\mathfrak{L}}^4}^4+C\left(1+\frac{1}{\e}\right)+\frac{L_g}{\e}\right]\|\mathrm{U}-\mathrm{V}\|_{\mathscr{H}}^2\geq 0,
\end{align}
for all $\mathrm{U},\mathrm{V}\in\mathscr{V}\cap\mathfrak{L}^{r+1}=\mathscr{V}$, for $n=2$ and $r\in[1,3]$. Now, for an $\widetilde{\mathfrak{L}}^4$-ball in $\mathscr{V}$, that is, for $\mathrm{V}\in\mathcal{B}_R$, where $\mathcal{B}_R:=\left\{\mathrm{Z}\in\mathscr{V}:\|\mathrm{Z}\|_{\widetilde{\mathfrak{L}}^4}\leq R \right\}$, we get from \eqref{3p13} that 
\begin{align}\label{3p14}
\langle\mu\wi\A(\mathrm{U}-\mathrm{V})+\widetilde{\F}(\mathrm{U})-\widetilde{\F}(\mathrm{V}),\mathrm{U}-\mathrm{V}\rangle+\left[\frac{27R^4}{32\mu^3}+C\left(1+\frac{1}{\e}\right)+\frac{L_g}{\e}\right]\|\mathrm{U}-\mathrm{V}\|_{\mathscr{H}}^2\geq 0, 
\end{align}
which implies that the operator $\mu\widetilde{\A}+\widetilde{\F}:\mathscr{V}\to\mathscr{V}'$ is locally monotone. Furthermore, we get 
\begin{align}
&\langle\mu\wi\A(\mathrm{U}-\mathrm{V})+\widetilde{\F}(\mathrm{U})-\widetilde{\F}(\mathrm{V}),\mathrm{U}-\mathrm{V}\rangle+\left[\frac{27R^4}{32\mu^3}+C\left(1+\frac{1}{\e}\right)+\frac{L_g}{\e}+L_{\sigma_2}^2\right]\|\mathrm{U}-\mathrm{V}\|_{\mathscr{H}}^2\nonumber\\&\geq 	\|\sigma_1(\x_1)-\sigma_1(\x_2)\|_{\mathcal{L}_{\Q_1}}^2+	\|\sigma_2(\x_1,\y_1)-\sigma_2(\x_2,\y_2)\|_{\mathcal{L}_{\Q_2}}^2\geq 0. 
\end{align}

Let us now consider the case $n=2,3$ and $r\in(3,\infty)$. 	From \eqref{2.23}, we easily have 
\begin{align}\label{2a27}
\beta	\langle\mathcal{C}(\x_1)-\mathcal{C}(\x_2),\x_1-\x_2\rangle \geq \frac{\beta}{2}\||\x_2|^{\frac{r-1}{2}}(\x_1-\x_2)\|_{\H}^2. 
\end{align}	Using H\"older's and Young's inequalities, we estimate $|\langle\B(\x_1-\x_2,\x_1-\x_2),\x_2\rangle|$ as  
\begin{align}\label{2a28}
|\langle\B(\x_1-\x_2,\x_1-\x_2),\x_2\rangle|&\leq\|\x_1-\x_2\|_{\V}\|\x_2(\x_1-\x_2)\|_{\H}\nonumber\\&\leq\frac{\mu }{2}\|\x_1-\x_2\|_{\V}^2+\frac{1}{2\mu }\|\x_2(\x_1-\x_2)\|_{\H}^2.
\end{align}
We take the term $\|\x_2(\x_1-\x_2)\|_{\H}^2$ from \eqref{2a28} and use H\"older's and Young's inequalities to estimate it as (see \cite{KWH} also)
\begin{align}\label{2a29}
&\int_{\mathcal{O}}|\x_2(x)|^2|\x_1(x)-\x_2(x)|^2\d x\nonumber\\&=\int_{\mathcal{O}}|\x_2(x)|^2|\x_1(x)-\x_2(x)|^{\frac{4}{r-1}}|\x_1(x)-\x_2(x)|^{\frac{2(r-3)}{r-1}}\d x\nonumber\\&\leq\left(\int_{\mathcal{O}}|\x_2(x)|^{r-1}|\x_1(x)-\x_2(x)|^2\d x\right)^{\frac{2}{r-1}}\left(\int_{\mathcal{O}}|\x_1(x)-\x_2(x)|^2\d x\right)^{\frac{r-3}{r-1}}\nonumber\\&\leq{\beta\mu }\left(\int_{\mathcal{O}}|\x_2(x)|^{r-1}|\x_1(x)-\x_2(x)|^2\d x\right)+\frac{r-3}{r-1}\left(\frac{2}{\beta\mu (r-1)}\right)^{\frac{2}{r-3}}\left(\int_{\mathcal{O}}|\x_1(x)-\x_2(x)|^2\d x\right),
\end{align}
for $r>3$. Using \eqref{2a29} in \eqref{2a28}, we find 
\begin{align}\label{2a30}
&|\langle\B(\x_1-\x_2,\x_1-\x_2),\x_2\rangle|\nonumber\\&\leq\frac{\mu }{2}\|\x_1-\x_2\|_{\V}^2+\frac{\beta}{2}\||\x_2|^{\frac{r-1}{2}}(\x_1-\x_2)\|_{\H}^2+\frac{r-3}{2\mu(r-1)}\left(\frac{2}{\beta\mu (r-1)}\right)^{\frac{2}{r-3}}\|\x_1-\x_2\|_{\H}^2.
\end{align}
Combining \eqref{3p12}, \eqref{2a27} and \eqref{2a30}, we get 
\begin{align}
&\langle\mu\wi\A(\mathrm{U}-\mathrm{V})+\widetilde{\F}(\mathrm{U})-\widetilde{\F}(\mathrm{V}),\mathrm{U}-\mathrm{V}\rangle\nonumber\\&\quad+\left[\frac{r-3}{2\mu(r-1)}\left(\frac{2}{\beta\mu (r-1)}\right)^{\frac{2}{(r-3)}}+C\left(1+\frac{1}{\e}\right)+\frac{L_g}{\e}\right]\|\mathrm{U}-\mathrm{V}\|_{\mathscr{H}}^2\geq 0,
\end{align}
for $r>3$ and the monotonicity of the operator  $\mu\widetilde{\A}+\widetilde{\F}:\mathscr{V}\cap\mathfrak{L}^{r+1}\to\mathscr{V}'+\mathfrak{L}^{\frac{r+1}{r}}$ follows. 

Now, for $n=3$ and $r=3$,	from \eqref{2.23}, we have 
\begin{align}\label{2a31}
\beta\langle\mathcal{C}(\x_1)-\mathcal{C}(\x_2),\x_1-\x_2\rangle\geq\frac{\beta}{2}\|\x_2(\x_1-\x_2)\|_{\H}^2. 
\end{align}
We estimate the term $|\langle\B(\x_1-\x_2,\x_1-\x_2),\x_2\rangle|$ using H\"older's and Young's inequalities as 
\begin{align}\label{2a32}
|\langle\B(\x_1-\x_2,\x_1-\x_2),\x_2\rangle|&\leq\|\x_2(\x_1-\x_2)\|_{\H}\|\x_1-\x_2\|_{\V}\nonumber\\& \leq\mu \|\x_1-\x_2\|_{\V}^2+\frac{1}{4\mu }\|\x_2(\x_1-\x_2)\|_{\H}^2.
\end{align}
Combining \eqref{3p12}, \eqref{2a31} and \eqref{2a32}, we obtain 
\begin{align}
&\langle\mu\wi\A(\mathrm{U}-\mathrm{V})+\widetilde{\F}(\mathrm{U})-\widetilde{\F}(\mathrm{V}),\mathrm{U}-\mathrm{V}\rangle+\left[C\left(1+\frac{1}{\e}\right)+\frac{L_g}{\e}\right]\|\mathrm{U}-\mathrm{V}\|_{\mathscr{H}}^2\nonumber\\&\geq\frac{1}{2}\left(\beta-\frac{1}{2\mu }\right)\|\x_2(\x_1-\x_2)\|_{\H}^2\geq 0,
\end{align}
provided $2\beta\mu \geq 1$ and the monotonicity of the operator $\mu\widetilde{\A}+\widetilde{\F}:\mathscr{V}\cap\mathfrak{L}^{4}\to\mathscr{V}'+\mathfrak{L}^{\frac{4}{3}}$ follows.

\vskip 0.1 cm
\textbf{Step 3:} \emph{Hemicontinuity property of the operator $\mu\widetilde{\A}+\widetilde{\F}$}. Let us now show that the operator $\mu\widetilde{\A}+\widetilde{\F}:\mathscr{V}\cap\mathfrak{L}^{r+1}\to\mathscr{V}'+\mathfrak{L}^{\frac{r+1}{r}}$ is hemicontinuous. We first consider the case $n=2,3$ and $r\in[3,\infty)$.	Let us take a sequence $\mathrm{U}^n\to \mathrm{U}$ in $\mathscr{V}\cap\mathfrak{L}^{r+1}$, that is, $\|\mathrm{U}^n-\mathrm{U}\|_{\widetilde{\mathfrak{L}}^{r+1}}+\|\mathrm{U}^n-\mathrm{U}\|_{\mathscr{V}}\to 0$, as $n\to\infty$. Therefore, we know that $\|\x^n-\x\|_{\wi\L^{r+1}}+\|\x^n-\x\|_{\V}+\|\y^n-\y\|_{\wi\L^{r+1}}+\|\y^n-\y\|_{\V}\to 0 $ as $n\to\infty$, for $\mathrm{U}^n=(\x^n,\y^n),\mathrm{U}=(\x,\y)\in\mathscr{V}\cap\mathfrak{L}^{r+1}$. For any $\mathrm{V}=(\wi\x,\wi\y)\in\mathscr{V}\cap\mathfrak{L}^{r+1}$, we consider 
\begin{align}\label{2p14}
&\langle\mu\wi\A(\mathrm{U}^n-\mathrm{U})+\widetilde{\F}(\mathrm{U}^n)-\widetilde{\F}(\mathrm{U}),\mathrm{V}\rangle\nonumber\\&= \langle\mu\A(\x^n-\x),\wi\x\rangle +\frac{\mu}{\e}\langle\A(\y^n-\y),\wi\y\rangle +\langle\B(\x^n)-\B(\x),\wi\x\rangle +\beta\langle\mathcal{C}(\x^n)-\mathcal{C}(\x),\wi\x\rangle\nonumber\\&\quad+\frac{\beta}{\e}\langle\mathcal{C}(\y^n)-\mathcal{C}(\y),\wi\y\rangle-(f(\x^n,\y^n)-f(\x,\y),\wi\x)-\frac{1}{\e}(g(\x^n,\y^n)-g(\x,\y),\wi\y). 
\end{align} 
Let us take $\langle\mu\A(\x^n-\x),\wi\x\rangle $ from \eqref{2p14} and estimate it as 
\begin{align}
|\langle\mu\A(\x^n-\x),\wi\x\rangle| =|(\nabla(\x^n-\x),\nabla\wi\x)|\leq\|\x^n-\x\|_{\V}\|\wi\x\|_{\V}\to 0, \ \text{ as } \ n\to\infty, 
\end{align}
since $\x^n\to \x$ in $\V$. Similarly, we get $	|\langle\mu\A(\y^n-\y),\widetilde\y\rangle| \to 0$ as $n\to\infty$. We estimate the term $\langle\B(\x^n)-\B(\x),\wi\x\rangle$ from \eqref{2p14} using H\"older's inequality as 
\begin{align}\label{325}
|\langle\B(\x^n)-\B(\x),\wi\x\rangle|&=|\langle\B(\x^n,\x^n-\x),\wi\x\rangle+\langle\B(\x^n-\x,\x),\wi\x\rangle|\nonumber\\&
\leq|\langle\B(\x^n,\wi\x),\x^n-\x\rangle|+|\langle\B(\x^n-\x,\wi\x),\x\rangle|\nonumber\\&\leq\left(\|\x^n\|_{\widetilde{\L}^{\frac{2(r+1)}{r-1}}}+\|\x\|_{\widetilde{\L}^{\frac{2(r+1)}{r-1}}}\right)\|\x^n-\x\|_{\widetilde{\L}^{r+1}}\|\wi\x\|_{\V}\nonumber\\&\leq \left(\|\x^n\|_{\H}^{\frac{r-3}{r-1}}\|\x^n\|_{\widetilde{\L}^{r+1}}^{\frac{2}{r-1}}+\|\x\|_{\H}^{\frac{r-3}{r-1}}\|\x\|_{\widetilde{\L}^{r+1}}^{\frac{2}{r-1}}\right)\|\x^n-\x\|_{\widetilde{\L}^{r+1}}\|\wi\x\|_{\V}\nonumber\\& \to 0, \ \text{ as } \ n\to\infty, 
\end{align}
since $\x^n\to\x$ in $\widetilde\L^{r+1}$ and $\x^n,\x\in\V\cap\widetilde\L^{r+1}$. We estimate the term $\langle \mathcal{C}(\x^n)-\mathcal{C}(\x),\wi\x\rangle$ from \eqref{214} using Taylor's formula and H\"older's inequality as (in fact, it is true for all $r\geq 1$)
\begin{align}
|\langle \mathcal{C}(\x^n)-\mathcal{C}(\x),\wi\x\rangle|&\leq \sup_{0<\theta<1}r\|(\x^n-\x)|\theta\x^n+(1-\theta)\x|^{r-1}\|_{\widetilde{\L}^{\frac{r+1}{r}}}\|\wi\x\|_{\widetilde{\L}^{r+1}}\nonumber\\&\leq r\|\x^n-\x\|_{\widetilde{\L}^{r+1}}\left(\|\x^n\|_{\widetilde{\L}^{r+1}}+\|\x\|_{\widetilde{\L}^{r+1}}\right)^{r-1}\|\wi\x\|_{\widetilde{\L}^{r+1}}\to 0, 
\end{align}
as $n\to\infty$,	since $\x^n\to\x$ in $\widetilde{\L}^{r+1}$ and $\x^n,\x\in\V\cap\widetilde{\L}^{r+1}$. Similarly, we obtain 
\begin{align}
|\langle \mathcal{C}(\y^n)-\mathcal{C}(\y),\wi\y\rangle|\to 0, 
\end{align}
as $n\to\infty$. 
Using the Assumption \ref{ass3.6} (A1), we get 
\begin{align}
|(f(\x^n,\y^n)-f(\x,\y),\wi\x)|&\leq\|f(\x^n,\y^n)-f(\x,\y)\|_{\H}\|\wi\x\|_{\H}\nonumber\\&\leq C(\|\x^n-\x\|_{\H}+\|\y^n-\y\|_{\H})\|\wi\x\|_{\H}\to 0, \ \text{ as } \ n\to\infty,
\end{align}
since $\x^n\to\x$ in $\H$ and $\y^n\to\y$ in $\H$. Once again making use of the Assumption \ref{ass3.6} (A.1), we find 
\begin{align}
|(g(\x^n,\y^n)-g(\x,\y),\wi\y)|\leq C\|\x^n-\x\|_{\H}+L_g\|\y^n-\y\|_{\H}\to 0, \ \text{ as } \ n\to\infty,
\end{align}
since $\x^n\to\x$ in $\H$ and $\y^n\to\y$ in $\H$.  From the above convergences, it is immediate that $\langle\mu\wi\A(\mathrm{U}^n-\mathrm{U})+\widetilde{\F}(\mathrm{U}^n)-\widetilde{\F}(\mathrm{U}),\mathrm{V}\rangle\to 0$, for all $\mathrm{V}\in\mathscr{V}\cap\mathfrak{L}^{r+1}$. 
Hence the operator $\mu\widetilde{\A}+\widetilde{\F}:\mathscr{V}\cap\mathfrak{L}^{r+1}\to\mathscr{V}'+\mathfrak{L}^{\frac{r+1}{r}}$ is demicontinuous, which implies that the operator $\mu\widetilde{\A}+\widetilde{\F}$ is hemicontinuous also. 

For $n=2$ and $r\in[1,3]$, we need to show the convergence \eqref{325} only. It can easily be seen that 
\begin{align}
|\langle\B(\x^n)-\B(\x),\wi\x\rangle|
&\leq|\langle\B(\x^n,\wi\x),\x^n-\x\rangle|+|\langle\B(\x^n-\x,\wi\x),\x\rangle|\nonumber\\&\leq\left(\|\x^n\|_{\wi\L^4}+\|\x\|_{\wi\L^4}\right)\|\wi\x\|_{\V}\|\x^n-\x\|_{\wi\L^4} \to 0, \ \text{ as } \ n\to\infty, 
\end{align}
since $\x^n\to\x$ in $\widetilde\L^{4}$ and $\x^n,\x\in\V$.

Now, proceeding similarly as in the proof of Theorem 3.7, \cite{MTM8}, we obtain the existence and uniqueness of strong solution to the system \eqref{3.6}  by using the above properties and a stochastic generalization of the Minty-Browder technique. For the case $n=2$ and $r\in[1,3],$ one can use the similar techniques in the works \cite{ICAM,MTM6,SSSP}, etc, where a localized version of the stochastic generalization of Minty-Browder technique is used in the proofs for the 2D stochastic Navier-Stokes equations and related models perturbed by Gaussian noise. 
\end{proof}

Let us now prove some uniform bounds with respect to $\e\in(0,1)$ for $p^{\mathrm{th}}$-moments of the strong solution $(\X^{\e}_t,\Y^{\e}_t)$ to the system \eqref{3.6}. 
\begin{lemma}\label{lem3.7}
	For any $\x,\y\in\H$, $T>0$, $p\geq 1$ and $\e\in(0,1)$, there exists a constant $C_{p,\mu,\lambda_1,L_g,T}>0$ such that the strong solution $(\X^{\e}_t,\Y^{\e}_t)$ to the system \eqref{3.6} satisfies: 
	\begin{align}\label{3.7}
&	\E\left[\sup_{t\in[0,T]}\|\X^{\e}_t\|_{\H}^{2p}+p\mu\int_0^T\|\X^{\e}_t\|_{\H}^{2p-2}\|\X^{\e}_t\|_{\V}^2\d t+2\beta p\int_0^T\|\X^{\e}_t\|_{\H}^{2p-2}\|\X^{\e}_t\|_{\wi\L^{r+1}}^{r+1}\d t\right]\nonumber\\&\leq C_{p,\mu,\lambda_1,L_g,T}\left(1+\|\x\|_{\H}^{2p}+\|\y\|_{\H}^{2p}\right), 
	\end{align}
	and 
	\begin{align}\label{38}
\sup_{t\in[0,T]}	\E\left[\|\Y^{\e}_t\|_{\H}^{2p}\right]\leq C_{p,\mu,\lambda_1,L_g,T}\left(1+\|\x\|_{\H}^{2p}+\|\y\|_{\H}^{2p}\right). 
	\end{align}
\end{lemma}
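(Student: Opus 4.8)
The plan is to apply It\^o's formula to $\|\X^{\e}_t\|_{\H}^{2p}$ and $\|\Y^{\e}_t\|_{\H}^{2p}$ separately, derive two coupled differential inequalities for the expected moments, and then decouple them using the strong dissipativity of the fast equation. Since for large $p$ the stochastic integrals and the dissipation integrals need not be a priori integrable, I would first localize by the stopping times $\tau_N:=\inf\{t\geq 0:\|\X^{\e}_t\|_{\H}+\|\Y^{\e}_t\|_{\H}\geq N\}$, derive all bounds on $[0,\t]$ with constants independent of $N$, and then let $N\to\infty$ using Fatou's lemma and monotone convergence.

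For the fast component, It\^o's formula applied to $\|\Y^{\e}_t\|_{\H}^{2p}$, followed by taking expectations (the stochastic integral being a genuine martingale up to $\tau_N$), produces a differential inequality after the following reductions: $\langle\A\Y^{\e},\Y^{\e}\rangle=\|\Y^{\e}\|_{\V}^2\geq\lambda_1\|\Y^{\e}\|_{\H}^2$ by Poincar\'e, $\langle\mathcal{C}(\Y^{\e}),\Y^{\e}\rangle=\|\Y^{\e}\|_{\wi\L^{r+1}}^{r+1}\geq 0$, the splitting $(g(\X^{\e},\Y^{\e}),\Y^{\e})=(g(\X^{\e},\Y^{\e})-g(\X^{\e},\mathbf{0}),\Y^{\e})+(g(\X^{\e},\mathbf{0}),\Y^{\e})$ whose first term contributes $L_g\|\Y^{\e}\|_{\H}^2$ via Assumption \ref{ass3.6} (A1) and whose second term is absorbed by Young's inequality, and the sublinear growth of $\sigma_2$ from (A2), whose $\|\Y^{\e}\|_{\H}^{2\zeta}$ contribution with $\zeta\in(0,1)$ (together with the It\^o correction term, bounded via Cauchy--Schwarz) is of lower order and hence absorbable. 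This yields $\frac{\d}{\d t}\E\|\Y^{\e}_t\|_{\H}^{2p}\leq\frac{1}{\e}\bigl[-c_0\,\E\|\Y^{\e}_t\|_{\H}^{2p}+C(1+\E\|\X^{\e}_t\|_{\H}^{2p})\bigr]$, where $c_0>0$ is guaranteed by Assumption (A3), which in particular forces $\mu\lambda_1>L_g$. The crucial point is that the decay rate and the forcing rate both carry the same factor $1/\e$, so integrating this linear inequality gives $\E\|\Y^{\e}_t\|_{\H}^{2p}\leq\|\y\|_{\H}^{2p}+\frac{C}{c_0}\bigl(1+\sup_{s\in[0,t]}\E\|\X^{\e}_s\|_{\H}^{2p}\bigr)$, a bound \emph{uniform in} $\e\in(0,1)$.

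For the slow component, It\^o's formula applied to $\|\X^{\e}_t\|_{\H}^{2p}$ gives, using $\langle\B(\X^{\e}),\X^{\e}\rangle=0$, the linear growth of $f$ and $\sigma_1$ from (A1), and Young's inequality on the cross term $\|\X^{\e}\|_{\H}^{2p-2}\|\Y^{\e}\|_{\H}^2$, an inequality of the form $\frac{\d}{\d t}\E\|\X^{\e}_t\|_{\H}^{2p}\leq C_1\bigl(1+\E\|\X^{\e}_t\|_{\H}^{2p}+\E\|\Y^{\e}_t\|_{\H}^{2p}\bigr)$, with the viscous and absorption terms $2p\mu\|\X^{\e}\|_{\H}^{2p-2}\|\X^{\e}\|_{\V}^2$ and $2p\beta\|\X^{\e}\|_{\H}^{2p-2}\|\X^{\e}\|_{\wi\L^{r+1}}^{r+1}$ appearing with favourable sign on the left. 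Substituting the uniform fast-component bound and writing $\phi(t):=\sup_{s\in[0,t]}\E\|\X^{\e}_s\|_{\H}^{2p}$, Gronwall's lemma yields $\phi(T)\leq C(1+\|\x\|_{\H}^{2p}+\|\y\|_{\H}^{2p})e^{CT}$ uniformly in $\e$; feeding this back into the fast-component estimate gives \eqref{38}.

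Finally, to obtain the supremum-inside-expectation bound \eqref{3.7}, I would return to the It\^o expansion for $\|\X^{\e}_t\|_{\H}^{2p}$, take the supremum over $[0,\T]$ before taking expectations, and control the martingale $\int_0^{\cdot}2p\|\X^{\e}_s\|_{\H}^{2p-2}(\sigma_1(\X^{\e}_s)\d\W^{\Q_1}_s,\X^{\e}_s)$ by the Burkholder--Davis--Gundy inequality; this produces $\tfrac12\,\E\sup_{s\leq\T}\|\X^{\e}_s\|_{\H}^{2p}$, which is absorbed on the left, plus a term $C\,\E\int_0^{\T}\|\X^{\e}_s\|_{\H}^{2p-2}(1+\|\X^{\e}_s\|_{\H}^2)\d s$ already controlled by the moment bound just proved. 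Retaining (a fraction of) the dissipation integrals on the left gives the stated coefficients $p\mu$ and $2\beta p$, and letting $N\to\infty$ completes the proof. The main obstacle is precisely the uniformity in $\e$: the $1/\e$ singularity of the fast equation is tamed by the strict dissipativity of (A3) together with the sublinear noise growth of (A2), which makes the decay rate dominate the forcing rate so that the two factors of $1/\e$ cancel in the stationary estimate.
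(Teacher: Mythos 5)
Your proposal is correct and follows essentially the same route as the paper: It\^o's formula for $\|\Y^{\e}_t\|_{\H}^{2p}$ combined with Poincar\'e, Assumption (A1)--(A2) and the dissipativity condition (A3) to get a variation-of-constants bound uniform in $\e$ (the two factors of $1/\e$ cancelling exactly as you describe), then It\^o's formula for $\|\X^{\e}_t\|_{\H}^{2p}$, the Burkholder--Davis--Gundy inequality for the martingale term, and Gronwall, with the fast-component bound fed back in at the end. The only differences are cosmetic: you add an explicit stopping-time localization (a reasonable extra precaution the paper leaves implicit) and you obtain $\sup_t\E$ for the slow component before upgrading to $\E\sup_t$, whereas the paper takes the supremum inside the expectation from the start.
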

\begin{proof}
	\textbf{Step 1:} \emph{$p^{\mathrm{th}}$-moment estimates for $\Y^{\e}_t$}. 
	An application of the infinite dimensional It\^o formula  to the process $\|\Y^{\e}_{t}\|_{\H}^2$  yields (cf. \cite{MTM8})
	\begin{align}\label{3.9}
	\|\Y^{\e}_t\|_{\H}^2&=\|\y\|_{\H}^2-\frac{2\mu}{\e}\int_0^t\|\Y^{\e}_s\|_{\V}^2\d s-\frac{2\beta}{\e}\int_0^t\|\Y^{\e}_s\|_{\wi\L^{r+1}}^{r+1}\d s+\frac{2}{\e}\int_0^t(g(\X^{\e}_s,\Y^{\e}_s),\Y^{\e}_s)\d s\nonumber\\&\quad+\frac{1}{\e}\int_0^t\|\sigma_2(\X^{\e}_s,\Y^{\e}_s)\|_{\mathcal{L}_{\Q_2}}^2\d s+\frac{2}{\sqrt{\e}}\int_0^t(\sigma_2(\X^{\e}_s,\Y^{\e}_s)\d\W_s^{\Q_2},\Y^{\e}_s), 
	\end{align}
	for all $t\in[0,T]$, $\mathbb{P}$-a.s. Now, applying It\^o's formula to the process $(\|\Y^{\e}_{t}\|_{\H}^2)^p$, we find 
	\begin{align}\label{3.10}
		\|\Y^{\e}_t\|_{\H}^{2p}&=\|\y\|_{\H}^{2p}-\frac{2p\mu}{\e}\int_0^t	\|\Y^{\e}_s\|_{\H}^{2p-2}\|\Y^{\e}_s\|_{\V}^2\d s-\frac{2p\beta}{\e}\int_0^t	\|\Y^{\e}_s\|_{\H}^{2p-2}\|\Y^{\e}_s\|_{\wi\L^{r+1}}^{r+1}\d s\nonumber\\&\quad+\frac{2p}{\e}\int_0^t\|\Y^{\e}_s\|_{\H}^{2p-2}(g(\X^{\e}_s,\Y^{\e}_s),\Y^{\e}_s)\d s+\frac{p}{\e}\int_0^t\|\Y^{\e}_s\|_{\H}^{2p-2}\|\sigma_2(\X^{\e}_s,\Y^{\e}_s)\|_{\mathcal{L}_{\Q_2}}^2\d s\nonumber\\&\quad+\frac{2p(p-1)}{\e}\int_0^t\|\Y^{\e}_s\|_{\H}^{2p-4}\Tr((\Y^{\e}_s\otimes\Y^{\e}_s)\sigma_2(\X^{\e}_s,\Y^{\e}_s)\Q_2\sigma_2^*(\X^{\e}_s,\Y^{\e}_s))\d s\nonumber\\&\quad+\frac{2p}{\sqrt{\e}}\int_0^t	\|\Y^{\e}_s\|_{\H}^{2p-2}(\sigma_2(\X^{\e}_s,\Y^{\e}_s)\d\W_s^{\Q_2},\Y^{\e}_s). 
	\end{align}
	Taking expectation in \eqref{3.10} and using the fact that the final term appearing in \eqref{3.10} is a martingale, we obtain 
	\begin{align}
	\E\left[\|\Y^{\e}_t\|_{\H}^{2p}\right]&=\|\y\|_{\H}^{2p}-\frac{2p\mu}{\e}\E\left[\int_0^t	\|\Y^{\e}_s\|_{\H}^{2p-2}\|\Y^{\e}_s\|_{\V}^2\d s\right]-\frac{2p\beta}{\e}\E\left[\int_0^t	\|\Y^{\e}_s\|_{\H}^{2p-2}\|\Y^{\e}_s\|_{\wi\L^{r+1}}^{r+1}\d s\right]\nonumber\\&\quad+\frac{2p}{\e}\E\left[\int_0^t\|\Y^{\e}_s\|_{\H}^{2p-2}(g(\X^{\e}_s,\Y^{\e}_s),\Y^{\e}_s)\d s\right]+\frac{p}{\e}\E\left[\int_0^t\|\Y^{\e}_s\|_{\H}^{2p-2}\|\sigma_2(\X^{\e}_s,\Y^{\e}_s)\|_{\mathcal{L}_{\Q_2}}^2\d s\right]\nonumber\\&\quad+\frac{2p(p-1)}{\e}\E\left[\int_0^t\|\Y^{\e}_s\|_{\H}^{2p-4}\Tr((\Y^{\e}_s\otimes\Y^{\e}_s)\sigma_2(\X^{\e}_s,\Y^{\e}_s)\Q_2\sigma_2^*(\X^{\e}_s,\Y^{\e}_s))\d s\right], 
	\end{align}
	for all $t\in[0,T]$. Thus, it is immediate that 
	\begin{align}\label{312}
	\frac{\d}{\d t}	\E\left[\|\Y^{\e}_t\|_{\H}^{2p}\right]&=-\frac{2p\mu}{\e}\E\left[	\|\Y^{\e}_t\|_{\H}^{2p-2}\|\Y^{\e}_t\|_{\V}^2\right]-\frac{2p\beta}{\e}\E\left[	\|\Y^{\e}_t\|_{\H}^{2p-2}\|\Y^{\e}_t\|_{\wi\L^{r+1}}^{r+1}\right]\nonumber\\&\quad+\frac{2p}{\e}\E\left[\|\Y^{\e}_t\|_{\H}^{2p-2}(g(\X^{\e}_t,\Y^{\e}_t),\Y^{\e}_t)\right]+\frac{p}{\e}\E\left[\|\Y^{\e}_t\|_{\H}^{2p-2}\|\sigma_2(\X^{\e}_t,\Y^{\e}_t)\|_{\mathcal{L}_{\Q_2}}^2\right]\nonumber\\&\quad+\frac{2p(p-1)}{\e}\E\left[\|\Y^{\e}_t\|_{\H}^{2p-4}\Tr((\Y^{\e}_t\otimes\Y^{\e}_t)\sigma_2(\X^{\e}_t,\Y^{\e}_t)\Q_2\sigma_2^*(\X^{\e}_t,\Y^{\e}_t))\right], 
	\end{align}
for a.e. $t\in[0,T]$. 	Using the Assumption \ref{ass3.6} (A1), the Cauchy-Schwarz inequality and Young's inequality, we get 
	\begin{align}\label{313}
\frac{2p}{\e}	\|\Y^{\e}_t\|_{\H}^{2p-2}(g(\X^{\e}_t,\Y^{\e}_t),\Y^{\e}_t)&\leq \frac{2p}{\e}\|g(\X^{\e}_t,\Y^{\e}_t)\|_{\H}\|\Y^{\e}_t\|_{\H}^{2p-1}\nonumber\\&\leq \frac{2p}{\e} \left(\|g(\mathbf{0},\mathbf{0})\|_{\H}+C\|\X^{\e}_t\|_{\H}+L_g\|\Y^{\e}_t\|_{\H}\right)\|\Y^{\e}_t\|_{\H}^{2p-1}\nonumber\\&\leq\left(\frac{p\mu\lambda_1}{2\e}+\frac{2pL_g}{\e}\right)\|\Y^{\e}_t\|_{\H}^{2p}+\frac{C_p}{\mu\lambda_1\e}\|\X^{\e}_t\|_{\H}^{2p}+\frac{C_p}{\mu\lambda_1\e}.
	\end{align}
	Using the Assumption \ref{ass3.6} (A2) and Young's inequality, we obtain 
	\begin{align}\label{314}
	\frac{p}{\e}\|\Y^{\e}_t\|_{\H}^{2p-2}\|\sigma_2(\X^{\e}_t,\Y^{\e}_t)\|_{\mathcal{L}_{\Q_2}}^2&\leq 	\frac{Cp}{\e}\|\Y^{\e}_t\|_{\H}^{2p-2}(1+\|\X^{\e}_t\|_{\H}^2+\|\Y^{\e}_t\|_{\H}^{2\zeta})\nonumber\\&\leq \frac{p\mu\lambda_1}{4\e}\|\Y^{\e}_t\|_{\H}^{2p}+\frac{C_p}{\mu\lambda_1\e}\|\X^{\e}_t\|_{\H}^{2p}+\frac{C_p}{\mu\lambda_1\e}.
	\end{align}
Note that $\Tr((\Y^{\e}_t\otimes\Y^{\e}_t)\sigma_2(\X^{\e}_t,\Y^{\e}_t)\Q_2\sigma_2^*(\X^{\e}_t,\Y^{\e}_t))\leq\|\Y^{\e}_t\|_{\H}^2\|\sigma_2(\X^{\e}_t,\Y^{\e}_t)\|_{\mathcal{L}_{\Q_2}}^2$. Using this fact, the Assumption \ref{ass3.6} (A2), and Young's inequality, we deduce that 
	\begin{align}\label{315}
&\frac{2p(p-1)}{\e}\|\Y^{\e}_t\|_{\H}^{2p-4}\Tr((\Y^{\e}_t\otimes\Y^{\e}_t)\sigma_2(\X^{\e}_t,\Y^{\e}_t)\Q_2\sigma_2^*(\X^{\e}_t,\Y^{\e}_t))\nonumber\\&\leq \frac{p\mu\lambda_1}{4\e}\|\Y^{\e}_t\|_{\H}^{2p}+\frac{C_p}{\mu\lambda_1\e}\|\X^{\e}_t\|_{\H}^{2p}+\frac{C_p}{\mu\lambda_1\e}.
	\end{align}
	Combining \eqref{313}-\eqref{315} and using it in \eqref{312}, we find 
	\begin{align}\label{3.16}
		\frac{\d}{\d t}	\E\left[\|\Y^{\e}_t\|_{\H}^{2p}\right]&=-\frac{p}{\e}\left(\mu{\lambda_1}-2L_g\right)\E\left[	\|\Y^{\e}_t\|_{\H}^{2p}\right]+\frac{C_p}{\mu\lambda_1\e}\E\left[\|\X^{\e}_t\|_{\H}^{2p}\right]+\frac{C_p}{\mu\lambda_1\e}, 
	\end{align}
	for a.e. $t\in[0,T]$. By the Assumption \ref{ass3.6} (A3), we know that $\mu\lambda_1>2L_g$ and an application of variation of constants formula gives 
	\begin{align}\label{3.17}
	\E\left[\|\Y^{\e}_t\|_{\H}^{2p}\right]&\leq\|\y\|_{\H}^{2p}e^{-\frac{p\gamma t}{\e}}+\frac{C_p}{\mu\lambda_1\e}\int_0^te^{-\frac{p\gamma(t-s)}{\e}}\left(1+\E\left[\|\X^{\e}_s\|_{\H}^{2p}\right]\right)\d s,
	\end{align}
	for all $t\in[0,T]$, where $\gamma=\left(\mu{\lambda_1}-2L_g\right)$. 
	
	\vskip 0.1 cm
		\textbf{Step 2:} \emph{$p^{\mathrm{th}}$-moment estimates for $\X^{\e}_t$}. 
Let us now obtain the $p^{\mathrm{th}}$-moment estimates for the process $\X^{\e}_{t}$ also. 	Applying the infinite dimensional It\^o formula to the process $\|\X^{\e}_{t}\|_{\H}^2$ (see \cite{MTM8}), we find 
	\begin{align}
	\|\X^{\e}_t\|_{\H}^2&=\|\x\|_{\H}^2-2\mu\int_0^t\|\X^{\e}_s\|_{\V}^2\d s-2\beta\int_0^t\|\X^{\e}_s\|_{\wi\L^{r+1}}^{r+1}\d s+2\int_0^t(f(\X^{\e}_s,\Y^{\e}_s),\X^{\e}_s)\d s\nonumber\\&\quad+\int_0^t\|\sigma_1(\X^{\e}_s)\|_{\mathcal{L}_{\Q}}^2\d s+2\int_0^t(\sigma_1(\X^{\e}_s)\d\W^{\Q_1}_s,\X^{\e}_s),	\end{align}
	for all $t\in[0,T]$, $\mathbb{P}$-a.s. Now, we apply It\^o's formula to the process $(\|\X^{\e}_\cdot\|_{\H}^2)^p$ to get 
	\begin{align}\label{3.19}
	&	\|\X^{\e}_t\|_{\H}^{2p}+2\mu p\int_0^t\|\X^{\e}_s\|_{\H}^{2p-2}\|\X^{\e}_s\|_{\V}^{2}\d s+2\beta p\int_0^t\|\X^{\e}_s\|_{\H}^{2p-2}\|\X^{\e}_s\|_{\wi\L^{r+1}}^{r+1}\d s\nonumber\\&=\|\x\|_{\H}^{2p}+2p\int_0^t\|\X^{\e}_s\|_{\H}^{2p-2}(f(\X^{\e}_s,\Y^{\e}_s),\X^{\e}_s)\d s +p\int_0^t\|\X^{\e}_s\|_{\H}^{2p-2}\|\sigma_1(\X^{\e}_s)\|_{\mathcal{L}_{\Q_1}}^2\d s\nonumber\\&\quad+2p(p-1)\int_0^t\|\X^{\e}_s\|_{\H}^{2p-4}\Tr((\X^{\e}_s\otimes\X^{\e}_s)\sigma_1(\X^{\e}_s)\Q_1\sigma_1^*(\X^{\e}_s))\d s\nonumber\\&\quad+2p\int_0^t\|\X^{\e}_s\|_{\H}^{2p-2}(\sigma_1(\X^{\e}_s)\d\W^{\Q_1}_s,\X^{\e}_s). 
	\end{align}
	Taking supremum over $[0,T]$ and then taking expectation in \eqref{3.19}, we obtain 
	\begin{align}\label{320}
	&\E\left[\sup_{t\in[0,T]}\|\X^{\e}_t\|_{\H}^{2p}+2\mu p\int_0^T\|\X^{\e}_t\|_{\H}^{2p-2}\|\X^{\e}_t\|_{\V}^{2}\d t+2\beta p\int_0^T\|\X^{\e}_t\|_{\H}^{2p-2}\|\X^{\e}_t\|_{\wi\L^{r+1}}^{r+1}\d t\right]\nonumber\\&\leq \|\x\|_{\H}^{2p}+2p\E\left[\int_0^T\|\X^{\e}_t\|_{\H}^{2p-2}|(f(\X^{\e}_t,\Y^{\e}_t),\X^{\e}_t)|\d t\right] +p\E\left[\int_0^T\|\X^{\e}_t\|_{\H}^{2p-2}\|\sigma_1(\X^{\e}_t)\|_{\mathcal{L}_{\Q_1}}^2\d t\right]\nonumber\\&\quad+2p(p-1)\E\left[\int_0^T\|\X^{\e}_t\|_{\H}^{2p-4}\Tr((\X^{\e}_t\otimes\X^{\e}_t)\sigma_1(\X^{\e}_t)\Q_1\sigma_1^*(\X^{\e}_t))\d t\right]\nonumber\\&\quad+2p\E\left[\sup_{t\in[0,T]}\left|\int_0^t\|\X^{\e}_s\|_{\H}^{2p-2}(\sigma_1(\X^{\e}_s)\d\W^{\Q_1}_s,\X^{\e}_s)\right|\right]\nonumber\\&\leq \|\x\|_{\H}^{2p}+C_pT+C_p\E\left[\int_0^T\|\X^{\e}_t\|_{\H}^{2p}\d t\right]+C_p\E\left[\int_0^T\|\Y^{\e}_t\|_{\H}^{2p}\d t\right]\nonumber\\&\quad+2p\E\left[\sup_{t\in[0,T]}\left|\int_0^t\|\X^{\e}_s\|_{\H}^{2p-2}(\sigma_1(\X^{\e}_s)\d\W^{\Q_1}_s,\X^{\e}_s)\right|\right], 
	\end{align}
where we used  calculations similar to \eqref{313}-\eqref{315}. Using Burkholder-Davis-Gundy inequality (see Theorem 1, \cite{BD} for the Burkholder-Davis-Gundy inequality for the case $p=1$ and Theorem 1.1, \cite{DLB} for the best constant, \cite{CMMR} for BDG inequality in infinite dimensions), we estimate the final term from the right hand side of the inequality \eqref{320} as 
\begin{align}\label{3.21}
&2p\E\left[\sup_{t\in[0,T]}\left|\int_0^t\|\X^{\e}_s\|_{\H}^{2p-2}(\sigma_1(\X^{\e}_s)\d\W^{\Q_1}_s,\X^{\e}_s)\right|\right]\nonumber\\&\leq C_p\E\left[\int_0^T\|\X^{\e}_t\|_{\H}^{4p-2}\|\sigma_1(\X^{\e}_t)\|_{\mathcal{L}_{\Q_1}}^2\d t\right]^{1/2}\nonumber\\&\leq C_p\E\left[\sup_{t\in[0,T]}\|\X^{\e}_t\|_{\H}^{2p}\left(\int_0^T\|\sigma_1(\X^{\e}_t)\|_{\mathcal{L}_{\Q_1}}^2\d t\right)^{1/2}\right]\nonumber\\&\leq\frac{1}{2}\E\left[\sup_{t\in[0,T]}\|\X^{\e}_t\|_{\H}^{2p}\right]+C_p\E\left[\left(\int_0^T\|\sigma_1(\X^{\e}_t)\|_{\mathcal{L}_{\Q_1}}^2\d t\right)^{p}\right]\nonumber\\&\leq\frac{1}{2}\E\left[\sup_{t\in[0,T]}\|\X^{\e}_t\|_{\H}^{2p}\right]+C_p\E\left[\int_0^T\|\X^{\e}_t\|_{\H}^{2p}\d t\right]+C_pT,
\end{align}
where we used the Assumption \ref{ass3.6} (A1). Using \eqref{3.17} and \eqref{3.21} in \eqref{320}, we deduce that 
\begin{align}\label{3.22}
&\E\left[\sup_{t\in[0,T]}\|\X^{\e}_t\|_{\H}^{2p}+4\mu p\int_0^T\|\X^{\e}_t\|_{\H}^{2p-2}\|\X^{\e}_t\|_{\V}^{2}\d t+4\beta p\int_0^T\|\X^{\e}_t\|_{\H}^{2p-2}\|\X^{\e}_t\|_{\wi\L^{r+1}}^{r+1}\d t\right]\nonumber\\&\leq  2\|\x\|_{\H}^{2p}+\frac{C_p\e}{\gamma}\|\y\|_{\H}^{2p}+C_pT+C_p\E\left[\int_0^T\|\X^{\e}_t\|_{\H}^{2p}\d t\right]\nonumber\\&\quad +\frac{C_p}{\mu\lambda_1\e}\int_0^T\int_0^te^{-\frac{p\gamma(t-s)}{\e}}\left(1+\E\left[\|\X^{\e}_s\|_{\H}^{2p}\right]\right)\d s\d t\nonumber\\&\leq 2\|\x\|_{\H}^{2p}+\frac{C_p}{\gamma}\|\y\|_{\H}^{2p}+C_p\left(1+\frac{1}{\mu\lambda_1\gamma}\right)T+C_p\left(1+\frac{1}{\mu\lambda_1\gamma}\right)\int_0^T\E\left(\sup_{s\in[0,t]}\|\X^{\e}_s\|_{\H}^{2p}\right)\d t,
\end{align}
since $\e\in(0,1)$. An application of Gronwall's inequality in \eqref{3.22} implies 
\begin{align}\label{3.23}
\E\left[\sup_{t\in[0,T]}\|\X^{\e}_t\|_{\H}^{2p}\right]\leq C_{p,\mu,\lambda_1,L_g,T}\left(1+\|\x\|_{\H}^{2p}+\|\y\|_{\H}^{2p}\right). 
\end{align}
Substituting \eqref{3.23} in \eqref{3.22} yields the estimate \eqref{3.7}. From \eqref{3.17}, we have 
	\begin{align}\label{3p17}
\E\left[\|\Y^{\e}_t\|_{\H}^{2p}\right]&\leq \|\y\|_{\H}^{2p}+\frac{C_p}{\mu\lambda_1\gamma}\left[1+\sup_{t\in[0,T]}\E\left(\|\X^{\e}_t\|_{\H}^{2p}\right)\right], 
\end{align}
for all $t\in[0,T]$. Using \eqref{3.23} in \eqref{3p17}, we finally obtain \eqref{38}. 
\end{proof}

	\section{Averaging Principle}\label{sec5}\setcounter{equation}{0}
In this section, we investigate the strong averaging principle for the coupled SCBF equations \eqref{3.6}. We exploit the  classical Khasminskii approach based on time discretization in the proof of the strong convergence of slow component   to the solution of the corresponding averaged SCBF equation.  Similar result for the coupled stochastic 2D Navier-Stokes equation is obtained in \cite{SLXS}. 

\begin{lemma}\label{lem3.8}
	For any $T>0$, $\e\in(0,1)$ and $\delta>0$ is small enough, there exists a constant $C_{\mu,\lambda_1,L_g,T}>0$ such that for any $\x,\y\in\H$, we have 
	\begin{align}\label{3.24}
	\E\left[\int_0^T\|\X_t^{\e}-\X^{\e}_{t(\delta)}\|_{\H}^2\d t \right]\leq C_{\mu,\lambda_1,L_g,T}\delta^{1/2}(1+\|\x\|_{\H}^3+\|\y\|_{\H}^3), 
	\end{align}
for $n=2$, $r\in[1,3)$ and 
	\begin{align}\label{324}
\E\left[\int_0^T\|\X_t^{\e}-\X^{\e}_{t(\delta)}\|_{\H}^2\d t \right]\leq C_{\mu,\lambda_1,L_g,T}\delta^{1/2}(1+\|\x\|_{\H}^2+\|\y\|_{\H}^2), 
\end{align}
for $n=2,3$, $r\in[3,\infty)$ ($2\beta\mu\geq 1$, for $n=r=3$), where $t(\delta):=\left[\frac{t}{\delta}\right]\delta$ and $[s]$ stands for the largest integer which is less than or equal $s$. 
\end{lemma}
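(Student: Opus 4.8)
The plan is to start from the variational (integral) form \eqref{4.4} of the slow equation, subtract it at times $t$ and $t(\delta)$, and estimate the resulting increment
\begin{align*}
\X^{\e}_t-\X^{\e}_{t(\delta)}=-\int_{t(\delta)}^t\left[\mu\A\X^{\e}_s+\B(\X^{\e}_s)+\beta\mathcal{C}(\X^{\e}_s)-f(\X^{\e}_s,\Y^{\e}_s)\right]\d s+\int_{t(\delta)}^t\sigma_1(\X^{\e}_s)\d\W_s^{\Q_1}
\end{align*}
by treating the drift and the stochastic integral separately and recombining them with the increment itself. The stochastic part is benign: by the It\^o isometry (or the Burkholder--Davis--Gundy inequality used in Lemma \ref{lem3.7}), the linear growth of $\sigma_1$ from Assumption \ref{ass3.6} (A1), and the uniform moment bounds \eqref{3.7}, its squared $\H$-norm has expectation of order $(t-t(\delta))\le\delta$, so after integrating over $t\in[0,T]$ it contributes at order $\delta$, already finer than the claimed rate.

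The drift is the heart of the matter, and the two regimes require different handling. For $n=2$ and $r\in[1,3)$ the nonlinearities are subcritical: using the Ladyzhenskaya bound $\|\B(\X)\|_{\V'}\le\sqrt{2}\|\X\|_{\H}\|\X\|_{\V}$ and the fact that $\mathcal{C}$ is of lower order, I would estimate the drift in the dual norm $\V'$, extract a factor $(t-t(\delta))^{1/2}\le\delta^{1/2}$ by Cauchy--Schwarz in time, and then recombine with the increment via the duality inequality $\|\cdot\|_{\H}^2\le\|\cdot\|_{\V}\|\cdot\|_{\V'}$ followed by a further Cauchy--Schwarz in $(\omega,t)$. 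The $\mathrm{L}^{\infty}(0,T;\H)$ and $\mathrm{L}^2(0,T;\V)$ a priori bounds of Lemma \ref{lem3.7}, invoked at a moment high enough to absorb the $\|\X^{\e}\|_{\H}\|\X^{\e}\|_{\V}$ product coming from $\B$, then produce the cubic dependence $1+\|\x\|_{\H}^3+\|\y\|_{\H}^3$ together with the rate $\delta^{1/2}$. For $n=2,3$ and $r\in[3,\infty)$ (with $2\beta\mu\ge1$ when $r=3$) the operator $\mathcal{C}$ grows too fast to be controlled in $\V'$, so instead I would apply the It\^o formula to $\|\X^{\e}_t-\X^{\e}_{t(\delta)}\|_{\H}^2$ on $[t(\delta),t]$ and use the local/global monotonicity of $\mu\wi\A+\wi\F$ from Lemmas \ref{thm2.2}--\ref{thm2.3} to dominate all nonlinear contributions by the dissipation plus a harmless $\|\X^{\e}_s-\X^{\e}_{t(\delta)}\|_{\H}^2$ term, the latter closed by Gronwall's inequality; this keeps only quadratic $\H$-norms and yields the constant $1+\|\x\|_{\H}^2+\|\y\|_{\H}^2$.

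The main obstacle is that the solution is only known to lie in $\mathrm{L}^2(0,T;\V)\cap\mathrm{L}^{r+1}(0,T;\wi\L^{r+1})$ with continuous $\H$-paths, so there is \emph{no} pointwise-in-time control of $\|\X^{\e}_s\|_{\V}$; consequently the unbounded part $\mu\A$ cannot be estimated in $\H$ term by term, and the frozen grid value $\X^{\e}_{t(\delta)}$ cannot simply be placed in $\V$. The real task is therefore to arrange that every surviving occurrence of the $\V$-norm sits either inside a time integral that reconstructs the a priori quantity $\E\int_0^T\|\X^{\e}_s\|_{\V}^2\d s$ through the Fubini identity $\int_0^T\int_{t(\delta)}^t g(s)\d s\,\d t\le\delta\int_0^T g(s)\d s$, or is attached to the good sign supplied by the dissipation/monotonicity, so that no bare pointwise evaluation $\|\X^{\e}_{t(\delta)}\|_{\V}^2$ is left uncontrolled. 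I expect this bookkeeping — balancing the weak-norm drift estimate, the duality inequality and the dissipation — to be the delicate step, whereas the exponent $\tfrac12$ in $\delta^{1/2}$ will simply be the byproduct of the Cauchy--Schwarz applications in time.
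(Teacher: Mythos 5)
You have correctly located the central obstacle --- the absence of any pointwise-in-time control of $\|\X^{\e}_{t(\delta)}\|_{\V}$ --- but your sketch does not actually supply the device that overcomes it, and the two strategies you propose both founder on exactly this point. In the regime $n=2$, $r\in[1,3)$, the duality inequality $\|\cdot\|_{\H}^2\leq\|\cdot\|_{\V}\|\cdot\|_{\V'}$ forces you to bound $\|\X^{\e}_t-\X^{\e}_{t(\delta)}\|_{\V}$, hence the bare grid evaluation $\|\X^{\e}_{t(\delta)}\|_{\V}$; the Fubini identity $\int_0^T\int_{t(\delta)}^tg(s)\,\d s\,\d t\leq\delta\int_0^Tg(s)\,\d s$ that you invoke reabsorbs integrals of $g$ over $[t(\delta),t]$ but is powerless against a pointwise value $g(t(\delta))$, since $t(\delta)$ ranges over a finite grid. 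In the regime $r\geq 3$, monotonicity of $\mu\wi\A+\wi\F$ compares $\G(\X^{\e}_s)-\G(\X^{\e}_{t(\delta)})$ against $\X^{\e}_s-\X^{\e}_{t(\delta)}$, but the equation only produces $\G(\X^{\e}_s)$; introducing the difference leaves you with $\langle\G(\X^{\e}_{t(\delta)}),\X^{\e}_s-\X^{\e}_{t(\delta)}\rangle$, which again requires uncontrolled pointwise $\V$- and $\wi\L^{r+1}$-norms at the grid points. Monotonicity is the tool for comparing two solutions, not for estimating a time increment of one solution, and indeed the paper does not use it in this lemma.

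The paper's resolution is different and is the step your proposal is missing: first replace the grid reference by a \emph{continuously shifted} one via the triangle inequality $\|\X^{\e}_t-\X^{\e}_{t(\delta)}\|_{\H}\leq\|\X^{\e}_t-\X^{\e}_{t-\delta}\|_{\H}+\|\X^{\e}_{t(\delta)}-\X^{\e}_{t-\delta}\|_{\H}$ (plus a separate treatment of $[0,\delta]$), and then apply the It\^o formula to $\|\X^{\e}_r-\X^{\e}_{t-\delta}\|_{\H}^2$ on $[t-\delta,t]$. This pairs every drift term with the increment in the natural duality ($\V$--$\V'$ for $\A$ and $\B$, $\wi\L^{r+1}$--$\wi\L^{(r+1)/r}$ for $\mathcal{C}$), and because $t-\delta$ sweeps continuously over $[0,T-\delta]$, Fubini turns every occurrence of $\|\X^{\e}_{t-\delta}\|_{\V}^2$ into $\delta\int_0^{T}\|\X^{\e}_u\|_{\V}^2\,\d u$, which the a priori bound \eqref{3.7} controls; no Gronwall and no monotonicity are needed, only Ladyzhenskaya (giving the cubic constant for $r<3$) or the interpolation \eqref{212} with $p=\tfrac{r-1}{2}$ (giving the quadratic constant for $r\geq3$). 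One further small correction: in the paper the drift terms all contribute at order $\delta$, and the exponent $\delta^{1/2}$ is produced by the Burkholder--Davis--Gundy estimate of the stochastic cross term $\int_{t-\delta}^t(\sigma_1(\X^{\e}_s)\,\d\W_s^{\Q_1},\X^{\e}_s-\X^{\e}_{t-\delta})$, not by a Cauchy--Schwarz in time on the drift as you anticipate.
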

\begin{proof}
	Using the estimate \eqref{3.7}, one can easily see that 
	\begin{align}\label{3.25}
	&	\E\left[\int_0^T\|\X_t^{\e}-\X^{\e}_{t(\delta)}\|_{\H}^2\d t\right] \nonumber\\&= 	\E\left[\int_0^{\delta}\|\X_t^{\e}-\x\|_{\H}^2\d t\right]+	\E\left[\int_{\delta}^T\|\X_t^{\e}-\X^{\e}_{t(\delta)}\|_{\H}^2\d t\right]\nonumber\\&\leq C_{\mu,\lambda_1,L_g,T}\left(1+\|\x\|_{\H}^{2}+\|\y\|_{\H}^{2}\right)\delta+	2\E\left[\int_{\delta}^T\|\X_t^{\e}-\X^{\e}_{t-\delta}\|_{\H}^2\d t\right]+2	\E\left[\int_{\delta}^T\|\X^{\e}_{t(\delta)}-\X^{\e}_{t-\delta}\|_{\H}^2\d t\right].
	\end{align}
	Let us first estimate the second term from the right hand side of the inequality \eqref{3.25}. Using the infinite dimensional It\^o formula applied to the process $\mathrm{Z}_{r}=\|\X^{\e}_{r}-\X^{\e}_{t-\delta}\|_{\H}^2$ over the interval $[t-\delta, t]$, we find 
	\begin{align}
	\|\X^{\e}_t-\X^{\e}_{t-\delta}\|_{\H}^2&=-2\mu\int_{t-\delta}^t\langle\A\X^{\e}_s,\X^{\e}_s-\X^{\e}_{t-\delta}\rangle\d s-2\int_{t-\delta}^t\langle\B(\X^{\e}_s),\X^{\e}_s-\X^{\e}_{t-\delta}\rangle\d s\nonumber\\&\quad-2\beta\int_{t-\delta}^t\langle\mathcal{C}(\X^{\e}_s),\X^{\e}_s-\X^{\e}_{t-\delta}\rangle\d s+2\int_{t-\delta}^t(f(\X^{\e}_s,\Y^{\e}_s),\X^{\e}_s-\X^{\e}_{t-\delta})\d s\nonumber\\&\quad+\int_{t-\delta}^t\|\sigma_1(\X^{\e}_s)\|_{\mathcal{L}_{\Q_1}}^2\d s+2\int_{t-\delta}^t(\sigma_1(\X^{\e}_s)\d\W_t^{\Q_1},\X^{\e}_s-\X^{\e}_{t-\delta})\d s\nonumber\\&=:\sum_{k=1}^6I_k(t).
	\end{align} 
	Using an integration by parts, H\"older's inequality,  Fubini's Theorem and \eqref{3.7}, we estimate $\E\left(\int_{\delta}^T|I_1(t)|\d t\right)$ as 
	\begin{align}\label{327}
\E\left(	\int_{\delta}^T|I_1(t)|\d t\right)&\leq 2\mu\E\left(\int_{\delta}^T\int_{t-\delta}^t\|\X^{\e}_s\|_{\V}\|\X^{\e}_s-\X^{\e}_{t-\delta}\|_{\V}\d s\d t\right)\nonumber\\&\leq 2\mu\left[\E\left(\int_{\delta}^T\int_{t-\delta}^t\|\X^{\e}_s\|_{\V}^2\d s\d t\right)\right]^{1/2} \left[\E\left(\int_{\delta}^T\int_{t-\delta}^t\|\X^{\e}_s-\X^{\e}_{t-\delta}\|_{\V}^2\d s\d t\right)\right]^{1/2}\nonumber\\&\leq 2\mu\left[\delta\E\left(\int_{0}^T\|\X^{\e}_t\|_{\V}^2\d t\right)\right]^{1/2}\left[2\delta\E\left(\int_{0}^T\|\X^{\e}_t\|_{\V}^2\d t\right)\right]^{1/2}\nonumber\\&\leq C_{\mu,\lambda_1,L_g,T}\delta\left(1+\|\x\|_{\H}^2+\|\y\|_{\H}^2\right). 
	\end{align}
For $n=2$ and $r\in[1,3)$,	using  H\"older's and Ladyzhenskaya's inequalities,  Fubini's Theorem and \eqref{3.7} (taking $p=2$), we estimate $\E\left(\int_{\delta}^T|I_2(t)|\d t\right)$ as 
	\begin{align}
&	\E\left(\int_{\delta}^T|I_2(t)|\d t\right)\nonumber\\&\leq 2\E\left(\int_{\delta}^T\int_{t-\delta}^t\|\X^{\e}_s\|_{\wi\L^4}^2\|\X^{\e}_s-\X^{\e}_{t-\delta}\|_{\V}\d s\d t\right)\nonumber\\&\leq 2\sqrt{2}\left[\E\left(\int_{\delta}^T\int_{t-\delta}^t\|\X^{\e}_s\|_{\H}^2\|\X^{\e}_s\|_{\V}^2\d s\d t\right)\right]^{1/2}\left[\E\left(\int_{\delta}^T\int_{t-\delta}^t\|\X^{\e}_s-\X^{\e}_{t-\delta}\|_{\V}^2\d s\d t\right)\right]^{1/2}\nonumber\\&\leq 2\sqrt{2}\left[\delta\E\left(\int_0^T\|\X^{\e}_t\|_{\H}^2\|\X^{\e}_t\|_{\V}^2\d t\right)\right]^{1/2} \left[2\delta\E\left(\int_{0}^T\|\X^{\e}_t\|_{\V}^2\d t\right)\right]^{1/2}\nonumber\\&\leq  C_{\mu,\lambda_1,L_g,T}\delta\left(1+\|\x\|_{\H}^3+\|\y\|_{\H}^3\right). 
	\end{align}
	For $n=2,3$ and $r\geq 3$ (take $2\beta\mu\geq1$, for $r=3$), we estimate $\E\left(\int_{\delta}^T|I_2(t)|\d t\right)$  using H\"older's inequality, interpolation inequality and \eqref{3.7} (by taking $p=\frac{r-1}{2}$) as 
	\begin{align}
	&	\E\left(\int_{\delta}^T|I_2(t)|\d t\right)\nonumber\\&\leq 2\E\left(\int_{\delta}^T\int_{t-\delta}^t\|\X^{\e}_s\|_{\wi\L^{r+1}}\|\X^{\e}_s\|_{\wi\L^{\frac{2(r+1)}{r-1}}}\|\X^{\e}_s-\X^{\e}_{t-\delta}\|_{\V}\d s\d t\right)\nonumber\\&\leq 2\E\left(\int_{\delta}^T\int_{t-\delta}^t\|\X^{\e}_s\|_{\H}^{\frac{r-3}{r-1}}\|\X^{\e}_s\|_{\wi\L^{r+1}}^{\frac{r+1}{r-1}}\|\X^{\e}_s-\X^{\e}_{t-\delta}\|_{\V}\d s\d t\right) \nonumber\\&\leq 2\left[\E\left(\int_{\delta}^T\int_{t-\delta}^t\|\X^{\e}_s\|_{\H}^{\frac{2(r-3)}{r-1}}\|\X^{\e}_s\|_{\wi\L^{r+1}}^{\frac{2(r+1)}{r-1}}\d s\d t\right)\right]^{1/2}\left[\E\left(\int_{\delta}^T\int_{t-\delta}^t\|\X^{\e}_s-\X^{\e}_{t-\delta}\|_{\V}^2\d s\d t\right)\right]^{1/2}\nonumber\\&\leq 2\left[\delta\E\left(\int_{0}^T\|\X^{\e}_t\|_{\H}^{\frac{2(r-3)}{r-1}}\|\X^{\e}_t\|_{\wi\L^{r+1}}^{\frac{2(r+1)}{r-1}}\d t\right)\right]^{1/2}\left[2\delta\E\left(\int_{0}^T\|\X^{\e}_t\|_{\V}^2\d t\right)\right]^{1/2}\nonumber\\&\leq 2\delta T^{\frac{r-3}{2(r-1)}}\left[\E\left(\int_{0}^T\|\X^{\e}_t\|_{\H}^{r-3}\|\X^{\e}_t\|_{\wi\L^{r+1}}^{r+1}\d t\right)\right]^{\frac{1}{r-1}}\left[2\E\left(\int_{0}^T\|\X^{\e}_t\|_{\V}^2\d t\right)\right]^{1/2}\nonumber\\&\leq C_{\mu,\lambda_1,L_g,T}\delta\left(1+\|\x\|_{\H}^2+\|\y\|_{\H}^2\right). 
	\end{align}
	Remember that $\frac{2r}{r+1}\leq 2$. It should  also be noted that if $x\geq 1$ and $a\leq b$ implies $x^a\leq x^b$ and if $0<x<1$, then $x^a\leq 1+x^b$. 	Once again using H\"older's inequality,  Fubini's Theorem and \eqref{3.7}, we estimate $\E\left(\int_{\delta}^T|I_3(t)|\d t\right)$ as 
		\begin{align}
		\E\left(\int_{\delta}^T|I_3(t)|\d t\right)&\leq 2\beta\E\left(\int_{\delta}^T\int_{t-\delta}^t\|\X^{\e}_s\|_{\wi\L^{r+1}}^r\|\X^{\e}_s-\X^{\e}_{t-\delta}\|_{\wi\L^{r+1}}\d s\d t\right)\nonumber\\&\leq 2\beta\left[\delta\E\left(\int_{0}^T\|\X^{\e}_t\|_{\wi\L^{r+1}}^{r+1}\d t\right)\right]^{\frac{r}{r+1}}\left[2^{r}\delta\E\left(\int_0^T\|\X^{\e}_t\|_{\wi\L^{r+1}}^{r+1}\d t\right)\right]^{\frac{1}{r+1}}\nonumber\\&\leq C_{\mu,\lambda_1,L_g,T}\delta\left(1+\|\x\|_{\H}^{\frac{2r}{r+1}}+\|\y\|_{\H}^{\frac{2r}{r+1}}\right)\nonumber\\&\leq C_{\mu,\lambda_1,L_g,T}\delta(1+\|\x\|_{\H}^2+\|\y\|_{\H}^2). 
	\end{align}
	We estimate $\E\left(\int_{\delta}^T|I_4(t)|\d t\right)$ using the Assumption \ref{ass3.6} (A1),   \eqref{3.7} and \eqref{38} as 
	\begin{align}
&	\E\left(\int_{\delta}^T|I_4(t)|\d t\right)\nonumber\\&\leq 2\E\left(\int_{\delta}^T\int_{t-\delta}^t\|f(\X^{\e}_s,\Y^{\e}_s)\|_{\H}\|\X^{\e}_s-\X^{\e}_{t-\delta}\|_{\H}\d s\d t\right)\nonumber\\&\leq 2\left[\E\left(\int_{\delta}^T\int_{t-\delta}^t\|f(\X^{\e}_s,\Y^{\e}_s)\|_{\H}^2\d s\d t\right)\right]^{1/2}\left[\E\left(\int_{\delta}^T\int_{t-\delta}^t\|\X^{\e}_s-\X^{\e}_{t-\delta}\|_{\H}^2\d s\d t\right)\right]^{1/2}\nonumber\\&\leq C\left[\delta\E\left(\int_{0}^T(1+\|\X^{\e}_t\|_{\H}^2+\|\Y^{\e}_t\|_{\H}^2)\d t\right)\right]^{1/2}\left[2\delta\E\left(\int_{0}^T\|\X^{\e}_t\|_{\H}^2\d t\right)\right]^{1/2}\nonumber\\&\leq C_{\mu,\lambda_1,L_g,T}\delta(1+\|\x\|_{\H}^2+\|\y\|_{\H}^2). 
	\end{align}
Once again using the Assumption \ref{ass3.6} (A1) and   \eqref{3.7}, we estimate $\E\left(\int_{\delta}^T|I_5(t)|\d t\right)$ as 
\begin{align}
\E\left(\int_{\delta}^T|I_5(t)|\d t\right)&\leq C\E\left(\int_{\delta}^T\int_{t-\delta}^t(1+\|\X^{\e}_s\|_{\H}^2)\d s\d t\right)\nonumber\\&\leq CT\delta \E\left[1+\sup_{t\in[0,T]}\|\X^{\e}_t\|_{\H}^2\right]\leq C_{\mu,\lambda_1,L_g,T}\delta(1+\|\x\|_{\H}^2+\|\y\|_{\H}^2). 
\end{align}
Finally, using the Burkholder-Davis-Gundy inequality, the Assumption \ref{ass3.6} (A1), Fubini's theorem and   \eqref{3.7},  we estimate $\E\left(\int_{\delta}^T|I_6(t)|\d t\right)$ as 
\begin{align}\label{333}
\E\left(\int_{\delta}^T|I_6(t)|\d t\right)&\leq C\int_{\delta}^T\E\left[\left(\int_{t-\delta}^t\|\sigma_1(\X^{\e}_s)\|_{\mathcal{L}_{\Q_1}}^2\|\X^{\e}_s-\X^{\e}_{t-\delta}\|_{\H}^2\d s\right)^{1/2}\right]\d t\nonumber\\&\leq CT^{1/2}\left[\E\left(\int_{\delta}^T\int_{t-\delta}^{t}\left(1+\|\X^{\e}_s\|_{\H}^2\right)\|\X^{\e}_s-\X^{\e}_{t-\delta}\|_{\H}^2\d s\d t\right)\right]^{1/2}\nonumber\\&\leq CT\delta^{1/2}\left[\E\left(\sup_{t\in[0,T]}\|\X_t\|_{\H}^2+\sup_{t\in[0,T]}\|\X_t\|_{\H}^4\right)\right]^{1/2}\nonumber\\&\leq C_{\mu,\lambda_1,L_g,T}\delta^{1/2}(1+\|\x\|_{\H}^2+\|\y\|_{\H}^2). 
\end{align}
Combining \eqref{327}-\eqref{333}, we deduce that 
\begin{align}\label{334}
\E\left[\int_{\delta}^T\|\X_t^{\e}-\X^{\e}_{t-\delta}\|_{\H}^2\d t\right]\leq  C_{\mu,\lambda_1,L_g,T}\delta^{1/2}(1+\|\x\|_{\H}^{\ell}+\|\y\|_{\H}^{\ell}), 
\end{align}
where $\ell=3$, for $n=2$,  $r\in[1,3)$, and $\ell=2$,  for $n=2,3$, $r\in[3,\infty)$  ($2\beta\mu\geq 1,$ for $n=3$). A similar argument leads to 
\begin{align}\label{335}
\E\left[\int_{\delta}^T\|\X_{t(\delta)}^{\e}-\X^{\e}_{t-\delta}\|_{\H}^2\d t\right]\leq  C_{\mu,\lambda_1,L_g,T}\delta^{1/2}(1+\|\x\|_{\H}^{\ell}+\|\y\|_{\H}^{\ell}).
\end{align}
Combining \eqref{3.25}, \eqref{334} and \eqref{335}, we obtain the required results \eqref{3.24} and \eqref{324}. 
\end{proof}

\subsection{Estimates of auxiliary process $\widehat{\Y}^{\e}_t$} We use the method proposed by Khasminskii, in \cite{RZK} to obtain the estimates for an auxiliary process. We introduce the auxiliary process $\widehat{\Y}^{\e}_t\in\H$ (see \eqref{3.37} below) and divide the interval $[0,T]$ into subintervals of  size $\delta$, where $\delta$ is a fixed positive number, which depends on $\e$ and it will be chosen later. Let us construct the process $\widehat{\Y}^{\e}_t$ with the initial value $\widehat{\Y}^{\e}_0=\Y^{\e}_0=\y$, and for any $k\in\mathbb{N}$ and $t\in[k\delta,\min\{(k+1)\delta,T\}]$ as 
\begin{align}\label{3.37}
\widehat{\Y}^{\e}_t&=\widehat{\Y}^{\e}_{k\delta}-\frac{\mu}{\e}\int_{k\delta}^t\A\widehat{\Y}^{\e}_s\d s-\frac{\beta}{\e}\int_{k\delta}^t\mathcal{C}(\widehat{\Y}^{\e}_s)\d s+\frac{1}{\e}\int_{k\delta}^tg(\X^{\e}_{k\delta},\widehat{\Y}^{\e}_s)\d s\nonumber\\&\quad+\frac{1}{\sqrt{\e}}\int_{k\delta}^t\sigma_2(\X^{\e}_{k\delta},\widehat{\Y}^{\e}_s)\d\W^{\Q_2}_s, \ \mathbb{P}\text{-a.s.}, 
\end{align}
which is equivalent to 
\begin{equation}\label{338}
\left\{
\begin{aligned}
\d\widehat{\Y}^{\e}_t&=-\frac{1}{\e}\left[\mu\A\widehat{\Y}^{\e}_t+\beta\mathcal{C}(\widehat{\Y}^{\e}_t)-g(\X^{\e}_{t(\delta)},\widehat{\Y}^{\e}_t)\right]\d t+\frac{1}{\sqrt{\e}}\sigma_2(\X^{\e}_{t(\delta)},\widehat{\Y}^{\e}_t)\d\W^{\Q_2}_t,\\
\widehat{\Y}^{\e}_0&=\y. 
\end{aligned}\right. 
\end{equation}
The following energy estimate satisfied by $\widehat{\Y}^{\e}_t$ can be proved in a similar way as in Lemma \ref{lem3.7}. 
\begin{lemma}\label{lem3.9}
	For any $\x,\y\in\H$, $T>0$ and $\e\in(0,1)$, there exists a constant $C_{\mu,\lambda_1,L_g,T}>0$ such that the strong solution $\widehat\Y^{\e}_{t}$ to the system \eqref{338} satisfies: 
	\begin{align}\label{341}
	\sup_{t\in[0,T]}	\E\left[\|\widehat\Y^{\e}_t\|_{\H}^{2}\right]\leq C_{\mu,\lambda_1,L_g,T}\left(1+\|\x\|_{\H}^{2}+\|\y\|_{\H}^{2}\right). 
	\end{align}
\end{lemma}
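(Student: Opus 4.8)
The plan is to mirror Step 1 of the proof of Lemma \ref{lem3.7}, since the auxiliary process \eqref{338} satisfies the same fast equation as $\Y^{\e}_t$ except that the slow input is frozen at the grid point $\X^{\e}_{t(\delta)}$ rather than evaluated at the running time $\X^{\e}_t$. Because $t(\delta)=\left[\frac{t}{\delta}\right]\delta\in[0,T]$ and $\X^{\e}_{t(\delta)}$ is $\mathscr{F}_t$-adapted, the frozen input does not spoil the martingale property of the stochastic integral, and the a priori moment bound \eqref{3.23} for the slow component transfers directly, giving $\sup_{t\in[0,T]}\E\left[\|\X^{\e}_{t(\delta)}\|_{\H}^{2}\right]\leq\sup_{t\in[0,T]}\E\left[\|\X^{\e}_t\|_{\H}^{2}\right]\leq C_{\mu,\lambda_1,L_g,T}(1+\|\x\|_{\H}^2+\|\y\|_{\H}^2)$.

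First I would apply the infinite-dimensional It\^o formula to the process $\|\widehat{\Y}^{\e}_t\|_{\H}^2$ on $[0,T]$; the process is continuous across the grid points by construction and solves \eqref{338} as a genuine It\^o process with adapted coefficients. After taking expectations the stochastic integral drops out, and using $\langle\A\widehat{\Y}^{\e}_t,\widehat{\Y}^{\e}_t\rangle=\|\widehat{\Y}^{\e}_t\|_{\V}^2$ together with $\langle\mathcal{C}(\widehat{\Y}^{\e}_t),\widehat{\Y}^{\e}_t\rangle=\|\widehat{\Y}^{\e}_t\|_{\wi\L^{r+1}}^{r+1}\geq 0$, I obtain the differential identity
\begin{align*}
\frac{\d}{\d t}\E\left[\|\widehat{\Y}^{\e}_t\|_{\H}^2\right]&=-\frac{2\mu}{\e}\E\left[\|\widehat{\Y}^{\e}_t\|_{\V}^2\right]-\frac{2\beta}{\e}\E\left[\|\widehat{\Y}^{\e}_t\|_{\wi\L^{r+1}}^{r+1}\right]+\frac{2}{\e}\E\left[(g(\X^{\e}_{t(\delta)},\widehat{\Y}^{\e}_t),\widehat{\Y}^{\e}_t)\right]\\&\quad+\frac{1}{\e}\E\left[\|\sigma_2(\X^{\e}_{t(\delta)},\widehat{\Y}^{\e}_t)\|_{\mathcal{L}_{\Q_2}}^2\right],
\end{align*}
where, since the exponent here is $p=1$, the additional trace correction present in \eqref{3.10} is absent.

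Next I would bound the two coupling terms exactly as in \eqref{313}--\eqref{314}: the Lipschitz bound in Assumption \ref{ass3.6} (A1) together with the Cauchy--Schwarz and Young inequalities gives $\frac{2}{\e}(g(\X^{\e}_{t(\delta)},\widehat{\Y}^{\e}_t),\widehat{\Y}^{\e}_t)\leq\left(\frac{\mu\lambda_1}{2\e}+\frac{2L_g}{\e}\right)\|\widehat{\Y}^{\e}_t\|_{\H}^2+\frac{C}{\mu\lambda_1\e}\|\X^{\e}_{t(\delta)}\|_{\H}^2+\frac{C}{\mu\lambda_1\e}$, while the sublinear growth with exponent $\zeta\in(0,1)$ in Assumption \ref{ass3.6} (A2) and Young's inequality absorb the noise term into $\frac{\mu\lambda_1}{2\e}\|\widehat{\Y}^{\e}_t\|_{\H}^2+\frac{C}{\mu\lambda_1\e}\|\X^{\e}_{t(\delta)}\|_{\H}^2+\frac{C}{\mu\lambda_1\e}$. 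Using the Poincar\'e inequality $\|\widehat{\Y}^{\e}_t\|_{\V}^2\geq\lambda_1\|\widehat{\Y}^{\e}_t\|_{\H}^2$ to exploit the dissipative drift, all the $\|\widehat{\Y}^{\e}_t\|_{\H}^2$ contributions combine into the rate $-\frac{1}{\e}(\mu\lambda_1-2L_g)=-\frac{\gamma}{\e}$, with $\gamma>0$ guaranteed by Assumption \ref{ass3.6} (A3); dropping the nonnegative $\|\widehat{\Y}^{\e}_t\|_{\wi\L^{r+1}}^{r+1}$ term yields
\begin{align*}
\frac{\d}{\d t}\E\left[\|\widehat{\Y}^{\e}_t\|_{\H}^2\right]\leq-\frac{\gamma}{\e}\E\left[\|\widehat{\Y}^{\e}_t\|_{\H}^2\right]+\frac{C}{\mu\lambda_1\e}\left(1+\E\left[\|\X^{\e}_{t(\delta)}\|_{\H}^2\right]\right).
\end{align*}

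Finally, the variation of constants formula together with the transferred slow-component bound controls the right-hand side uniformly in $t$ and $\e$, precisely as in the passage from \eqref{3.17} to \eqref{3p17}, producing $\E\left[\|\widehat{\Y}^{\e}_t\|_{\H}^2\right]\leq\|\y\|_{\H}^2+\frac{C}{\mu\lambda_1\gamma}\left(1+\sup_{t\in[0,T]}\E\left[\|\X^{\e}_{t(\delta)}\|_{\H}^2\right]\right)$, and hence \eqref{341}. There is no genuine analytic obstacle here: the argument is a routine adaptation of Lemma \ref{lem3.7} with $p=1$. The only point demanding care is the bookkeeping around the frozen input, namely verifying that $\X^{\e}_{t(\delta)}$ is adapted, so that the diffusion term remains a true martingale, and that its second moment is inherited from \eqref{3.23} uniformly over the grid, which is immediate because $t(\delta)\in[0,T]$.
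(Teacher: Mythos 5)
Your proposal is correct and follows essentially the same route the paper intends: the paper gives no separate proof of Lemma \ref{lem3.9}, stating only that it "can be proved in a similar way as in Lemma \ref{lem3.7}," and your argument is precisely Step 1 of that proof with $p=1$, the running slow input replaced by the adapted frozen value $\X^{\e}_{t(\delta)}$, and the bound $\sup_{t\in[0,T]}\E[\|\X^{\e}_{t(\delta)}\|_{\H}^{2}]\leq\E[\sup_{t\in[0,T]}\|\X^{\e}_{t}\|_{\H}^{2}]$ imported from \eqref{3.23}. Your rate bookkeeping ($-\tfrac{2\mu\lambda_1}{\e}+\tfrac{\mu\lambda_1}{2\e}+\tfrac{\mu\lambda_1}{2\e}+\tfrac{2L_g}{\e}=-\tfrac{\gamma}{\e}$) and the variation-of-constants step match \eqref{3.16}--\eqref{3p17} exactly.
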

Our next aim is to establish an estimate on the difference between the processes $\Y^{\e}_t$ and $\widehat\Y^{\e}_t$. 
\begin{lemma}\label{lem3.10}
	For any $\x,\y\in\H$, $T>0$ and $\e\in(0,1)$, there exists a constant $C_{\mu,\lambda_1,L_g,T}>0$ such that 
	\begin{align}\label{3.42}
	\mathbb{E}\left(\int_0^T\|\Y^{\e}_t-\widehat\Y^{\e}_t\|_{\H}^2\d t\right)\leq C_{\mu,\lambda_1,L_g,T}\delta^{1/2}(1+\|\x\|_{\H}^{\ell}+\|\y\|_{\H}^{\ell}), 
	\end{align}
	where $\ell=3$, for $n=2$,  $r\in[1,3)$, and $\ell=2$,  for $n=2,3$, $r\in[3,\infty)$  ($2\beta\mu\geq 1,$ for $n=3$). 
\end{lemma}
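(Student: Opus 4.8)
The plan is to control the difference process $\mathrm{Z}^{\e}_t := \Y^{\e}_t-\widehat\Y^{\e}_t$, which satisfies $\mathrm{Z}^{\e}_0=\mathbf{0}$ and, upon subtracting \eqref{338} from the second equation of \eqref{3.6},
\begin{align*}
\d\mathrm{Z}^{\e}_t&=-\frac{1}{\e}\left[\mu\A\mathrm{Z}^{\e}_t+\beta\big(\mathcal{C}(\Y^{\e}_t)-\mathcal{C}(\widehat\Y^{\e}_t)\big)-\big(g(\X^{\e}_t,\Y^{\e}_t)-g(\X^{\e}_{t(\delta)},\widehat\Y^{\e}_t)\big)\right]\d t\\
&\quad+\frac{1}{\sqrt{\e}}\big(\sigma_2(\X^{\e}_t,\Y^{\e}_t)-\sigma_2(\X^{\e}_{t(\delta)},\widehat\Y^{\e}_t)\big)\d\W^{\Q_2}_t.
\end{align*}
First I would apply the infinite-dimensional It\^o formula to $\|\mathrm{Z}^{\e}_t\|_{\H}^2$ and take expectations; the stochastic integral is a martingale and vanishes, leaving the identity for $\frac{\d}{\d t}\E\|\mathrm{Z}^{\e}_t\|_{\H}^2$ consisting of the dissipation $-\frac{2\mu}{\e}\E\|\mathrm{Z}^{\e}_t\|_{\V}^2$, the Forchheimer term $-\frac{2\beta}{\e}\E\langle\mathcal{C}(\Y^{\e}_t)-\mathcal{C}(\widehat\Y^{\e}_t),\mathrm{Z}^{\e}_t\rangle$, the drift-mismatch term $\frac{2}{\e}\E\langle g(\X^{\e}_t,\Y^{\e}_t)-g(\X^{\e}_{t(\delta)},\widehat\Y^{\e}_t),\mathrm{Z}^{\e}_t\rangle$, and the It\^o correction $\frac{1}{\e}\E\|\sigma_2(\X^{\e}_t,\Y^{\e}_t)-\sigma_2(\X^{\e}_{t(\delta)},\widehat\Y^{\e}_t)\|_{\mathcal{L}_{\Q_2}}^2$.

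The next step is to estimate each term so as to produce a strictly negative effective coefficient for $\|\mathrm{Z}^{\e}_t\|_{\H}^2$. The monotonicity estimate \eqref{214} shows $\langle\mathcal{C}(\Y^{\e}_t)-\mathcal{C}(\widehat\Y^{\e}_t),\mathrm{Z}^{\e}_t\rangle\geq 0$, so the Forchheimer term is discarded. The Poincar\'e inequality \eqref{poin} gives $-\frac{2\mu}{\e}\|\mathrm{Z}^{\e}_t\|_{\V}^2\leq-\frac{2\mu\lambda_1}{\e}\|\mathrm{Z}^{\e}_t\|_{\H}^2$. For the drift-mismatch term I would use Assumption \ref{ass3.6} (A1), $\|g(\X^{\e}_t,\Y^{\e}_t)-g(\X^{\e}_{t(\delta)},\widehat\Y^{\e}_t)\|_{\H}\leq C\|\X^{\e}_t-\X^{\e}_{t(\delta)}\|_{\H}+L_g\|\mathrm{Z}^{\e}_t\|_{\H}$, followed by the Cauchy--Schwarz and Young inequalities, producing $\tfrac{2L_g}{\e}\|\mathrm{Z}^{\e}_t\|_{\H}^2$ plus an arbitrarily small $\tfrac{\kappa}{\e}\|\mathrm{Z}^{\e}_t\|_{\H}^2$ and a remainder $\tfrac{C_\kappa}{\e}\|\X^{\e}_t-\X^{\e}_{t(\delta)}\|_{\H}^2$. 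Similarly, (A1) for $\sigma_2$ together with $(a+b)^2\le 2a^2+2b^2$ yields $\tfrac{2L_{\sigma_2}^2}{\e}\|\mathrm{Z}^{\e}_t\|_{\H}^2+\tfrac{2C^2}{\e}\|\X^{\e}_t-\X^{\e}_{t(\delta)}\|_{\H}^2$. Collecting the $\|\mathrm{Z}^{\e}_t\|_{\H}^2$ coefficients gives $-\tfrac{1}{\e}\big(2\mu\lambda_1-2L_g-2L_{\sigma_2}^2-\kappa\big)$; since Assumption \ref{ass3.6} (A3) guarantees $\mu\lambda_1-2L_g-2L_{\sigma_2}^2>0$, the quantity $2\mu\lambda_1-2L_g-2L_{\sigma_2}^2=\mu\lambda_1+(\mu\lambda_1-2L_g-2L_{\sigma_2}^2)>\mu\lambda_1>0$, so for $\kappa$ small there is a constant $\vartheta>0$ with
\begin{align*}
\frac{\d}{\d t}\E\|\mathrm{Z}^{\e}_t\|_{\H}^2\leq-\frac{\vartheta}{\e}\E\|\mathrm{Z}^{\e}_t\|_{\H}^2+\frac{C}{\e}\E\|\X^{\e}_t-\X^{\e}_{t(\delta)}\|_{\H}^2.
\end{align*}

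With $\mathrm{Z}^{\e}_0=\mathbf{0}$, the variation of constants formula gives $\E\|\mathrm{Z}^{\e}_t\|_{\H}^2\leq\frac{C}{\e}\int_0^t e^{-\vartheta(t-s)/\e}\E\|\X^{\e}_s-\X^{\e}_{s(\delta)}\|_{\H}^2\d s$. Integrating over $[0,T]$ and applying Fubini's theorem, the crucial cancellation of the singular factor $\frac{1}{\e}$ occurs via $\frac{1}{\e}\int_s^T e^{-\vartheta(t-s)/\e}\d t\leq\frac{1}{\vartheta}$, leaving $\int_0^T\E\|\mathrm{Z}^{\e}_t\|_{\H}^2\d t\leq\frac{C}{\vartheta}\int_0^T\E\|\X^{\e}_s-\X^{\e}_{s(\delta)}\|_{\H}^2\d s$. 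Finally, invoking Lemma \ref{lem3.8} (i.e. \eqref{3.24} for $n=2$, $r\in[1,3)$, and \eqref{324} for $n=2,3$, $r\in[3,\infty)$) bounds the right-hand side by $C_{\mu,\lambda_1,L_g,T}\delta^{1/2}(1+\|\x\|_{\H}^{\ell}+\|\y\|_{\H}^{\ell})$, yielding \eqref{3.42}. I expect the main obstacle to be the careful bookkeeping that ensures the dissipation from $\mu\A$ (through Poincar\'e) genuinely dominates the destabilizing Lipschitz contributions of $g$ and $\sigma_2$ precisely under (A3), together with the observation that the Forchheimer monotonicity \eqref{214} lets one drop the nonlinear term outright; the remaining substantive point is the Fubini argument that absorbs the $\e^{-1}$ prefactor into the exponential, which is what makes the bound uniform in $\e\in(0,1)$.
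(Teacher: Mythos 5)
Your proposal is correct and follows essentially the same route as the paper: Itô's formula for $\|\Y^{\e}_t-\widehat\Y^{\e}_t\|_{\H}^2$, dropping the Forchheimer term by the monotonicity \eqref{214}, Poincar\'e plus the Lipschitz bounds on $g$ and $\sigma_2$ to produce a strictly negative coefficient under (A3), then variation of constants and Fubini to absorb the $\e^{-1}$ factor and invoke Lemma \ref{lem3.8}. The only cosmetic difference is your choice of constants in Young's inequality (the paper spends $\mu\lambda_1/\e$ of the dissipation on the cross term and ends with the rate $\mu\lambda_1-2L_g-L_{\sigma_2}^2$, while you keep an arbitrarily small $\kappa$), which does not affect the argument.
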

\begin{proof}
	Let us define $\mathbf{U}^{\e}_t:=\Y^{\e}_t-\widehat\Y^{\e}_t$. Then $\mathbf{U}^{\e}_{t}$ satisfies the following It\^o stochastic differential: 
	\begin{equation}
	\left\{
	\begin{aligned}
	\d\U^{\e}_t&=-\frac{1}{\e}\left[\mu\A\U^{\e}_t+\beta(\mathcal{C}(\Y^{\e}_t)-\mathcal{C}(\widehat\Y^{\e}_t))-(g(\X^{\e}_{t},{\Y}^{\e}_t)-g(\X^{\e}_{t(\delta)},\widehat{\Y}^{\e}_t))\right]\d t\\&\quad+\frac{1}{\sqrt{\e}}\left[\sigma_2(\X^{\e}_{t},{\Y}^{\e}_t)-\sigma_2(\X^{\e}_{t(\delta)},\widehat{\Y}^{\e}_t)\right]\d\W_t^{\Q_2},\\
	\U^{\e}_0&=\mathbf{0}. 
	\end{aligned}
	\right. 
	\end{equation}
	An application of the infinite dimensional It\^o formula to the process $\|\U^{\e}_{t}\|_{\H}^2$ yields 
	\begin{align}\label{344}
	\|\U^{\e}_t\|_{\H}^2&=-\frac{2\mu}{\e}\int_0^t\|\U^{\e}_s\|_{\V}^2\d s-\frac{2\beta}{\e}\int_0^t\langle\mathcal{C}(\Y^{\e}_s)-\mathcal{C}(\widehat\Y^{\e}_s),\U^{\e}_s\rangle\d s\nonumber\\&\quad+\frac{2}{\e}\int_0^t(g(\X^{\e}_s,{\Y}^{\e}_s)-g(\X^{\e}_{s(\delta)},\widehat{\Y}^{\e}_s),\U^{\e}_s)\d s\nonumber\\&\quad+\frac{1}{\sqrt{\e}}\int_0^t([\sigma_2(\X^{\e}_s,{\Y}^{\e}_s)-\sigma_2(\X^{\e}_{s(\delta)},\widehat{\Y}^{\e}_s)]\d\W_s^{\Q_2},\U^{\e}_s)\nonumber\\&\quad+\frac{1}{\e}\int_0^t\|\sigma_2(\X^{\e}_s,{\Y}^{\e}_s)-\sigma_2(\X^{\e}_{s(\delta)},\widehat{\Y}^{\e}_s)\|_{\mathcal{L}_{\Q_2}}^2\d s, \ \mathbb{P}\text{-a.s.},
	\end{align}
	for all $t\in[0,T]$. Taking expectation in \eqref{344}, we obtain 
	\begin{align}
	\E\left[	\|\U^{\e}_t\|_{\H}^2\right]&=-\frac{2\mu}{\e}\int_0^t\E\left[\|\U^{\e}_s\|_{\V}^2\right]\d s-\frac{2\beta}{\e}\E\left[\int_0^t\langle\mathcal{C}(\Y^{\e}_s)-\mathcal{C}(\widehat\Y^{\e}_s),\U^{\e}_s\rangle\d s\right]\nonumber\\&\quad+\frac{2}{\e}\int_0^t\E\left[(g(\X^{\e}_s,{\Y}^{\e}_s)-g(\X^{\e}_{s(\delta)},\widehat{\Y}^{\e}_s),\U^{\e}_s)\right]\d s\nonumber\\&\quad+\frac{1}{\e}\int_0^t\E\left[\|\sigma_2(\X^{\e}_s,{\Y}^{\e}_s)-\sigma_2(\X^{\e}_{s(\delta)},\widehat{\Y}^{\e}_s)\|_{\mathcal{L}_{\Q_2}}^2\right]\d s. 
	\end{align}
	Thus, it is immediate that 
	\begin{align}\label{3.46}
	\frac{\d}{\d t}	\E\left[	\|\U^{\e}_t\|_{\H}^2\right]&=-\frac{2\mu}{\e}\E\left[\|\U^{\e}_t\|_{\V}^2\right]-\frac{2\beta}{\e}\E\left[\langle\mathcal{C}(\Y^{\e}_t)-\mathcal{C}(\widehat\Y^{\e}_t),\U^{\e}_t\rangle\right]\nonumber\\&\quad+\frac{2}{\e}\E\left[(g(\X^{\e}_t,{\Y}^{\e}_t)-g(\X^{\e}_{t(\delta)},\widehat{\Y}^{\e}_t),\U^{\e}_t)\right]\nonumber\\&\quad+\frac{1}{\e}\E\left[\|\sigma_2(\X^{\e}_t,{\Y}^{\e}_t)-\sigma_2(\X^{\e}_{t(\delta)},\widehat{\Y}^{\e}_t)\|_{\mathcal{L}_{\Q_2}}^2\right], 
	\end{align}
	for a.e. $t\in[0,T]$. Applying \eqref{214}, we find 
	\begin{align}\label{3p47}
&-\frac{2\beta}{\e}\langle\mathcal{C}(\Y^{\e}_t)-\mathcal{C}(\widehat\Y^{\e}_t),\U^{\e}_t\rangle\leq-\frac{\beta}{2^{r-2}\e}\|\U^{\e}_t\|_{\wi\L^{r+1}}^{r+1}.
	\end{align}
	Using the Assumption \ref{ass3.6} (A1), we get 
	\begin{align}\label{3.47}
\frac{2}{\e}(g(\X^{\e}_t,\widehat{\Y}^{\e}_t)-g(\X^{\e}_{t(\delta)},\widehat{\Y}^{\e}_t),\U^{\e}_t)&\leq \frac{2}{\e}\|g(\X^{\e}_t,\widehat{\Y}^{\e}_t)-g(\X^{\e}_{t(\delta)},\widehat{\Y}^{\e}_t)\|_{\H}\|\U^{\e}_t\|_{\H}\nonumber\\&\leq \frac{C}{\e}\|\X^{\e}_t-\X^{\e}_{t(\delta)}\|_{\H}\|\U^{\e}_t\|_{\H}+\frac{2L_g}{\e}\|\U^{\e}_t\|_{\H}^2\nonumber\\&\leq\left(\frac{\mu\lambda_1}{\e}+\frac{2L_g}{\e}\right)\|\U^{\e}_t\|_{\H}^2+\frac{C}{\mu\lambda_1\e}\|\X^{\e}_t-\X^{\e}_{t(\delta)}\|_{\H}^2. 
	\end{align} 
	Similarly, using the Assumption \ref{ass3.6} (A1), we obtain
	\begin{align}\label{3.48}
\frac{1}{\e}	\|\sigma_2(\X^{\e}_t,{\Y}^{\e}_t)-\sigma_2(\X^{\e}_{t(\delta)},\widehat{\Y}^{\e}_t)\|_{\mathcal{L}_{\Q_2}}^2&\leq \frac{C}{\e}\|\X^{\e}_t-\X^{\e}_{t(\delta)}\|_{\H}^2+\frac{L_{\sigma_2}^2}{\e}\|\U^{\e}_t\|_{\H}^2. 
	\end{align}
	Combining \eqref{3p47}-\eqref{3.48} and substituting it in \eqref{3.46}, we deduce that 
	\begin{align}\label{3.49}
		\frac{\d}{\d t}	\E\left[	\|\U^{\e}_t\|_{\H}^2\right]&\leq-\frac{1}{\e}(\mu\lambda_1-2L_g-L_{\sigma_2}^2)\E\left[\|\U^{\e}_t\|_{\H}^2\right]+\frac{C}{\e}\left(1+\frac{1}{\mu\lambda_1}\right)\E\left[\|\X^{\e}_t-\X^{\e}_{t(\delta)}\|_{\H}^2\right]. 
	\end{align}
	Using the Assumption \ref{ass3.6} (A3) and variation of constants  formula, from \eqref{3.49}, we infer that 
	\begin{align}
	\E\left[\|\U^{\e}_t\|_{\H}^2\right]\leq \frac{C}{\e}\left(1+\frac{1}{\mu\lambda_1}\right)\int_0^te^{-\frac{\kappa}{\e}(t-s)}\E\left[\|\X^{\e}_s-\X^{\e}_{s(\delta)}\|_{\H}^2\right]\d s,
	\end{align}
	where $\kappa=\mu\lambda_1-2L_g-L_{\sigma_2}^2>0$. Applying Fubini's theorem, for any $T>0$, we have 
	\begin{align}
	\E\left[\int_0^T\|\U^{\e}_t\|_{\H}^2\d t\right]&\leq \frac{C}{\e}\left(1+\frac{1}{\mu\lambda_1}\right)\int_0^T\int_0^te^{-\frac{\kappa}{\e}(t-s)}\E\left[\|\X^{\e}_s-\X^{\e}_{s(\delta)}\|_{\H}^2\right]\d s\d t\nonumber\\&=\frac{C}{\e}\left(1+\frac{1}{\mu\lambda_1}\right)\E\left[\int_0^T\|\X^{\e}_s-\X^{\e}_{s(\delta)}\|_{\H}^2\left(\int_s^Te^{-\frac{\kappa}{\e}(t-s)}\d t\right)\d s\right]\nonumber\\&\leq\frac{C}{\kappa}\left(1+\frac{1}{\mu\lambda_1}\right)\E\left[\int_0^T\|\X^{\e}_t-\X^{\e}_{t(\delta)}\|_{\H}^2\d t\right]\nonumber\\&\leq C_{\mu,\lambda_1,L_g,T}\delta^{1/2}(1+\|\x\|_{\H}^{\ell}+\|\y\|_{\H}^{\ell}), 
	\end{align}
	using Lemma \ref{lem3.8} (see \eqref{3.24} and \eqref{324}), which completes the proof. 
\end{proof}

\subsection{Frozen equation} The frozen equation associated with the fast motion for fixed slow component  $\x\in\H$ is given by 
\begin{equation}\label{3.74} 
\left\{
\begin{aligned}
\d\Y_t&=-[\mu\A\Y_t+\beta\mathcal{C}(\Y_t)-g(\x,\Y_t)]\d t+\sigma_2(\x,\Y_t)\d\bar{\W}_t^{\Q_2},\\
\Y_0&=\y,
\end{aligned}\right.
\end{equation}
where $\bar{\W}_t^{\Q_2}$ is a $\Q_2$-Wiener process independent of $\W_{t}^{\Q_1}$ and $\W_{t}^{\Q_2}$. From the Assumption \ref{ass3.6}, we know that $g(\x,\cdot)$ and $\sigma_2(\x,\cdot)$ are Lipschitz continuous. Thus, one can show that for any fixed $\x\in\H$ and any initial data $\y\in\H$, there exists a unique strong solution $\Y_{t}^{\x,\y}\in\mathrm{L}^2(\Omega;\mathrm{L}^{\infty}(0,T;\H)\cap\mathrm{L}^2(0,T;\V))\cap\mathrm{L}^{r+1}(\Omega;\mathrm{L}^{r+1}(0,T;\wi\L^{r+1}))$ to the system \eqref{3.74} with a continuous modification with paths in $\C([0,T];\H)\cap\mathrm{L}^2(0,T;\V)\cap\mathrm{L}^{r+1}(0,T;\wi\L^{r+1})$, $\mathbb{P}$-a.s.  A proof of this can be obtained in a same way as in Theorem 3.7, \cite{MTM8} by making use of the monotonicity property of the linear and nonlinear operators (see Lemmas \ref{thm2.2}-\ref{lem2.8}) as well as a stochastic generalization of the Minty-Browder technique (localized version for the case $n=2$ and $r\in[1,3]$). Furthermore, the strong solution satisfies the infinite dimensional It\^o formula (see \eqref{3.76} below). For the stochastic reaction-diffusion equation ($\beta=0$ in \eqref{3.74}), the following result is available in the work \cite{SC}.

\begin{proposition}\label{prop3.12}
	For any given $\x,\y\in\H$, there exists a unique invariant measure for the system \eqref{3.74}. Furthermore, there exists a constant $C_{\mu,\lambda_1,L_g}>0$ and $\zeta>0$ such that for any Lipschitz function $\varphi:\H\to\R$, we have 
	\begin{align}\label{393}
	\left|\mathrm{P}_t^{\x}\varphi(\y)-\int_{\H}\varphi(\z)\nu^{\x}(\d\z)\right|\leq C_{\mu,\lambda_1,L_g}(1+\|\x\|_{\H}+\|\y\|_{\H})e^{-\frac{\zeta t}{2}}\|\varphi\|_{\mathrm{Lip}(\H)},
	\end{align}
	where $\zeta=2\mu\lambda_1-2L_g-L_{\sigma_2}^2>0$ and $ \|\varphi\|_{\mathrm{Lip}(\H)}=\sup\limits_{\x,\y\in\H}\frac{|\varphi(\x)-\varphi(\y)|}{\|\x-\y\|_{\H}}$. 
\end{proposition}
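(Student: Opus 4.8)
The plan is to base the entire proof on a single exponential contraction estimate in $\H$ between two solutions of the frozen equation \eqref{3.74} driven by the same noise but started from different initial data, and to read off existence, uniqueness and the mixing bound \eqref{393} from it. The key simplification to exploit is that \eqref{3.74} carries \emph{no} convective term $\B$: its dynamics are governed solely by the dissipation $\mu\A$, the monotone damping $\beta\mathcal{C}$, and the Lipschitz perturbations $g,\sigma_2$, so none of the delicate nonlinear estimates needed for the slow equation enter here.

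First I would fix $\x\in\H$, take $\y_1,\y_2\in\H$, and set $\boldsymbol{\Phi}_t:=\Y_t^{\x,\y_1}-\Y_t^{\x,\y_2}$. Applying the infinite-dimensional It\^o formula to $\|\boldsymbol{\Phi}_t\|_{\H}^2$ and taking expectations gives, for a.e.\ $t$,
\begin{align*}
\frac{\d}{\d t}\E\|\boldsymbol{\Phi}_t\|_{\H}^2 &= -2\mu\E\|\boldsymbol{\Phi}_t\|_{\V}^2 - 2\beta\E\langle\mathcal{C}(\Y_t^{\x,\y_1})-\mathcal{C}(\Y_t^{\x,\y_2}),\boldsymbol{\Phi}_t\rangle\\
&\quad + 2\E(g(\x,\Y_t^{\x,\y_1})-g(\x,\Y_t^{\x,\y_2}),\boldsymbol{\Phi}_t) + \E\|\sigma_2(\x,\Y_t^{\x,\y_1})-\sigma_2(\x,\Y_t^{\x,\y_2})\|_{\mathcal{L}_{\Q_2}}^2.
\end{align*}
The damping term is nonpositive by the monotonicity \eqref{214}; the dissipative term is controlled by the Poincar\'e inequality \eqref{poin} through $-2\mu\|\boldsymbol{\Phi}_t\|_{\V}^2\le -2\mu\lambda_1\|\boldsymbol{\Phi}_t\|_{\H}^2$; and since $\x$ is common to both copies, Assumption \ref{ass3.6}(A1) yields $2(g(\x,\Y_t^{\x,\y_1})-g(\x,\Y_t^{\x,\y_2}),\boldsymbol{\Phi}_t)\le 2L_g\|\boldsymbol{\Phi}_t\|_{\H}^2$ and $\|\sigma_2(\x,\Y_t^{\x,\y_1})-\sigma_2(\x,\Y_t^{\x,\y_2})\|_{\mathcal{L}_{\Q_2}}^2\le L_{\sigma_2}^2\|\boldsymbol{\Phi}_t\|_{\H}^2$. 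Collecting these and using Assumption \ref{ass3.6}(A3) to guarantee $\zeta=2\mu\lambda_1-2L_g-L_{\sigma_2}^2>0$, Gronwall's inequality produces the contraction $\E\|\Y_t^{\x,\y_1}-\Y_t^{\x,\y_2}\|_{\H}^2\le e^{-\zeta t}\|\y_1-\y_2\|_{\H}^2$.

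For existence and uniqueness I would run the Krylov--Bogoliubov scheme. A uniform-in-time bound $\sup_{t\ge0}\E\|\Y_t^{\x,\y}\|_{\H}^2\le C(1+\|\x\|_{\H}^2+\|\y\|_{\H}^2)$, proved exactly as in Lemma \ref{lem3.7} from the same dissipativity, together with the resulting averaged control of $\frac{1}{T}\int_0^T\E\|\Y_t^{\x,\y}\|_{\V}^2\d t$ and the compact embedding $\V\hookrightarrow\H$, makes the family of time-averaged laws $\frac{1}{T}\int_0^T\mathrm{P}_t^{\x}(\y,\cdot)\d t$ tight; any weak limit is an invariant measure $\nu^{\x}$ (the continuous dependence above gives the Feller property needed to pass invariance to the limit), and $\nu^{\x}$ inherits the finite moment $\int_{\H}\|\z\|_{\H}^2\nu^{\x}(\d\z)\le C(1+\|\x\|_{\H}^2)$. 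Uniqueness is immediate from the contraction: coupling any two invariant measures through \eqref{3.74} and integrating forces their $2$-Wasserstein distance to vanish; equivalently, for $t$ large the dual semigroup is a strict $\mathcal{W}_2$-contraction on $\mathcal{P}_2(\H)$ and the Banach fixed point theorem applies.

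Finally, for \eqref{393} I would use invariance, $\int_{\H}\varphi\,\d\nu^{\x}=\int_{\H}\mathrm{P}_t^{\x}\varphi\,\d\nu^{\x}$, to write
\begin{align*}
\left|\mathrm{P}_t^{\x}\varphi(\y)-\int_{\H}\varphi(\z)\nu^{\x}(\d\z)\right|
&\le\int_{\H}\big|\E\varphi(\Y_t^{\x,\y})-\E\varphi(\Y_t^{\x,\z})\big|\,\nu^{\x}(\d\z)\\
&\le\|\varphi\|_{\mathrm{Lip}(\H)}\int_{\H}\E\|\Y_t^{\x,\y}-\Y_t^{\x,\z}\|_{\H}\,\nu^{\x}(\d\z),
\end{align*}
and then apply the contraction via Cauchy--Schwarz, $\E\|\Y_t^{\x,\y}-\Y_t^{\x,\z}\|_{\H}\le e^{-\zeta t/2}\|\y-\z\|_{\H}$, followed by the first-moment bound on $\nu^{\x}$ to replace $\int_{\H}\|\y-\z\|_{\H}\nu^{\x}(\d\z)$ with $C(1+\|\x\|_{\H}+\|\y\|_{\H})$. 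I expect the only genuinely delicate step to be the existence argument: verifying tightness in infinite dimensions hinges on the uniform $\V$-energy estimate combined with the compactness of $\V\hookrightarrow\H$, and one must confirm that the limit is invariant for the full semigroup rather than merely stationary along the extracted sequence; everything else follows directly from monotonicity and the Lipschitz hypotheses.
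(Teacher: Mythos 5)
Your proposal is correct and follows essentially the same route as the paper: the Itô-formula contraction estimate $\E\|\Y_t^{\x,\y_1}-\Y_t^{\x,\y_2}\|_{\H}^2\leq e^{-\zeta t}\|\y_1-\y_2\|_{\H}^2$ with $\zeta=2\mu\lambda_1-2L_g-L_{\sigma_2}^2$ (using monotonicity of $\mathcal{C}$, Poincar\'e, and the Lipschitz hypotheses), Krylov--Bogoliubov with tightness from the uniform $\V$-bound and the compact embedding $\V\hookrightarrow\H$ for existence, and invariance plus the second-moment bound on $\nu^{\x}$ for the mixing estimate \eqref{393}. The only cosmetic difference is that you phrase uniqueness as a Wasserstein contraction while the paper deduces it from \eqref{393} by density of $\mathrm{Lip}(\H)$ in $\C_b(\H)$; both are valid consequences of the same contraction.
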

\begin{proof} 
  Let $\mathrm{P}_t^{\x}$ be the transition semigroup associated with the process $\Y_t^{\x,\y}$, that is, for any bounded measurable function $\varphi$ on $\H$, we have 
\begin{align}
\mathrm{P}_t^{\x}\varphi(\y)=\E\left[\varphi(\Y_t^{\x,\y})\right], \ \y\in\H \ \text{ and }\ t>0. 
\end{align}
An application of the infinite dimensional It\^o formula to the process $\|\Y_{t}^{\x,\y}\|_{\H}^2$ yields 
\begin{align}\label{3.76}
&\|\Y_t^{\x,\y}\|_{\H}^2+2\mu\int_0^t\|\Y_s^{\x,\y}\|_{\V}^2\d s+2\beta\int_0^t\|\Y_s^{\x,\y}\|_{\wi\L^{r+1}}^{r+1}\d s\nonumber\\&=\|\y\|_{\H}^2+2\int_0^t(g(\x,\Y_s^{\x,\y}),\Y_s^{\x,\y})\d s+2\int_0^t(\sigma_2(\x,\Y_s)\d\bar{\W}_s^{\Q_2},\Y_s^{\x,\y})+\int_0^t\|\sigma_2(\x,\Y_s^{\x,\y})\|_{\mathcal{L}_{\Q_2}}^2\d s, 
\end{align}
$\mathbb{P}$-a.s., for all $t\in[0,T]$. Taking expectation in \eqref{3.76}, we find 
\begin{align}\label{3.77}
\E\left[\|\Y_t^{\x,\y}\|_{\H}^2\right]&=\|\y\|_{\H}^2-2\mu\E\left[\int_0^t\|\Y_s^{\x,\y}\|_{\V}^2\d s\right]-2\beta\E\left[\int_0^t\|\Y_s^{\x,\y}\|_{\wi\L^{r+1}}^{r+1}\d s\right]\nonumber\\&\quad+2\E\left[\int_0^t(g(\x,\Y_s^{\x,\y}),\Y_s^{\x,\y})\d s\right]+\E\left[\int_0^t\|\sigma_2(\x,\Y_s^{\x,\y})\|_{\mathcal{L}_{\Q_2}}^2\d s\right],
\end{align}
for all $t\in[0,T]$. Thus, we have 
\begin{align}\label{3.78}
\frac{\d}{\d t}\E\left[\|\Y_t^{\x,\y}\|_{\H}^2\right]&= -2\mu\E\left[\|\Y_t^{\x,\y}\|_{\V}^2\right]-2\beta\E\left[\|\Y_t^{\x,\y}\|_{\wi\L^{r+1}}^{r+1}\right]+2\E\left[(g(\x,\Y_t^{\x,\y}),\Y_t^{\x,\y})\right]\nonumber\\&\quad+\E\left[\|\sigma_2(\x,\Y_t^{\x,\y})\|_{\mathcal{L}_{\Q_2}}^2\right] \nonumber\\&\leq -(\mu\lambda_1-2L_g)\E\left[\|\Y_t^{\x,\y}\|_{\H}^2\right]+\frac{C}{\mu\lambda_1}(1+\|\x\|_{\H}^2),
\end{align}
where we used the Assumption \ref{ass3.6} (A1) and (A2). For $\gamma=\mu\lambda_1-2L_g>0$, an application of Gronwall's inequality in \eqref{3.78} yields
\begin{align}\label{3.79}
\E\left[\|\Y_t^{\x,\y}\|_{\H}^2\right]\leq e^{-\gamma t}\|\y\|_{\H}^2+ \frac{C}{\mu\lambda_1\gamma}(1+\|\x\|_{\H}^2)\leq C_{\mu,\lambda_1,L_g}(1+\|\x\|_{\H}^2+e^{-\gamma t}\|\y\|_{\H}^2),
\end{align} 
for all $t\in[0,T]$. Furthermore, from \eqref{3.77}, we also obtain 
\begin{align}\label{380}
&2\mu\E\left[\int_0^t\|\Y_s^{\x,\y}\|_{\V}^2\d s\right]\nonumber\\&\leq\|\y\|_{\H}^2+C_{L_g,L_{\sigma_2}}(1+\|\x\|_{\H}^2)t+(C_{\mu,\lambda_1,L_g}+2L_g+L_{\sigma_2}^2)\E\left[\int_0^t\|\Y_s^{\x,\y}\|_{\H}^2\d s\right]\nonumber\\&\leq \|\y\|_{\H}^2+C_{L_g,L_{\sigma_2}}(1+\|\x\|_{\H}^2)t+(C_{\mu,\lambda_1,L_g}+2L_g+L_{\sigma_2}^2)\int_0^t(1+\|\x\|_{\H}^2+e^{-\gamma s}\|\y\|_{\H}^2)\d s\nonumber\\&\leq C_{\mu,\lambda_1,L_g,L_{\sigma_2}}(\|\y\|_{\H}^2+(1+\|\x\|_{\H}^2)t),
\end{align}
for $t\geq 0$. 

Let us denote by $\mathcal{M}(\H)$ as the space of all probability measures defined on $(\H,\mathscr{B}(\H))$. For any $n\in\N$, we define the Krylov-Bogoliubov measure as 
\begin{align*}
\nu_n^{\x}:=\frac{1}{n}\int_{\mathbf{0}}^n\delta_{\mathbf{0}}\mathrm{P}_t^{\x}\d t, \ n\geq 1, 
\end{align*}
where $\delta_{\mathbf{0}}$ is the Dirac measure at $\mathbf{0}$. Then, it can be easily seen that $\nu_n^{\x}$ is a probability measure such that for any $\varphi\in\mathscr{B}(\H)$, we have $$\int_{\H}\varphi\d\nu_n^{\x}=\frac{1}{n}\int_0^n\mathrm{P}_t^{\x}\varphi(\mathbf{0})\d t.$$ 
We know that the embedding of $\V\subset\H$ is compact and hence for any $R\in(0,\infty)$, the set $K_R:=\{\x\in\H:\|\x\|_{\V}\leq R\}$ is relatively compact in $\H$. Thus,  using Markov's inequality, it is immediate from \eqref{380} that 
\begin{align}
\nu_n^{\x}(K_R^c)&=\frac{1}{n}\int_0^n\mathbb{P}\left\{\|\Y_0^{\x,\mathbf{0}}\|_{\H}>R\right\}\d s\leq \frac{1}{nR^2}\int_0^n\E\left[\|\Y_s^{\x,\mathbf{0}}\|_{\V}^2\right]\d s\nonumber\\&\leq \frac{C_{\mu,\lambda_1,L_g,L_{\sigma_2}}\|\y\|_{\H}^2}{n_0 R^2\mu}+ \frac{C_{\mu,\lambda_1,L_g,L_{\sigma_2}}}{R^2\mu}(1+\|\x\|_{\H}^2), \ \text{ for all }\ n\geq n_0, \nonumber\\&\to 0\ \text{ as } \ R\to\infty, 
\end{align} 
which implies that the sequence $\{\nu_n^{\x}\}_{n\geq 1}$ is tight. Hence, using Prokhorov's theorem, there exists a probability measure $\nu^{\x}$ and a subsequence $\{\nu_{n_k}^{\x}\}_{k\in\mathbb{N}}$ such that $\nu_{n_k}^{\x} \xrightarrow{w}\nu^{\x}$, as $k\to\infty$. One can easily show that $\nu^{\x}$ is an invariant probability measure for $\{\mathrm{P}_t^{\x}\}_{t\geq 0}$. Thus, by the Krylov-Bogoliubov theorem (or by a  result of Chow and Khasminskii, see \cite{CHKH})  $\nu^{\x}$ results to be an invariant measure for the transition semigroup $(\mathrm{P}_t^{\x})_{t\geq 0}$,  defined by 	$\mathrm{P}_t^{\x}\varphi(\x)=\E\left[\varphi(\Y_t^{\x,\y})\right],$ for all $\varphi\in\C_b(\H)$.

Let us now prove the uniqueness of invariant measure. For $\y_1,\y_2\in\H$, we have 
\begin{equation}\label{3.80} 
\left\{
\begin{aligned}
\d\mathbf{V}_t&=-\{\mu\A\mathbf{V}_t+\beta[\mathcal{C}(\Y_{t}^{\x,\y_1})-\mathcal{C}(\Y_{t}^{\x,\y_2})]-[g(\x,\Y_t^{\x,\y_1})-g(\x,\Y_t^{\x,\y_2})]\}\d t\\&\quad+[\sigma_2(\x,\Y_t^{\x,\y_1})-\sigma_2(\x,\Y_t^{\x,\y_2})]\d\bar{\W}_t^{\Q_2},\\
\mathbf{V}_0&=\y_1-\y_2,
\end{aligned}\right.
\end{equation}
where $\mathbf{V}_{t}=\Y_{t}^{\x,\y_1}-\Y_{t}^{\x,\y_2}$. An application of the infinite dimensional  It\^o formula to the process $\|\mathbf{V}_{t}\|_{\H}^2$  yields 
\begin{align}
\E\left[\|\mathbf{V}_{t}\|_{\H}^2\right]&=\|\mathbf{V}_{0}\|_{\H}^2-2\mu\E\left[\int_0^t\|\mathbf{V}_{s}\|_{\V}^2\d s\right]-2\beta\E\left[\int_0^t\langle\mathcal{C}(\Y_{s}^{\x,\y_1})-\mathcal{C}(\Y_{s}^{\x,\y_2}),\mathbf{V}_{s}\rangle\d s\right]\nonumber\\&\quad+2\E\left[\int_0^t(g(\x,\Y_s^{\x,\y_1})-g(\x,\Y_s^{\x,\y_2}),\mathbf{V}_s)\d s\right]\nonumber\\&\quad+\E\left[\int_0^t\|\sigma_2(\x,\Y_s^{\x,\y_1})-\sigma_2(\x,\Y_s^{\x,\y_2})\|_{\mathcal{L}_{\Q_2}}^2\d s\right].
\end{align}
Thus, using the Assumption \ref{ass3.6} (A1)-(A2), and \eqref{214},  it is immediate that 
\begin{align}\label{3.82}
\frac{\d}{\d t}\E\left[\|\mathbf{V}_{t}\|_{\H}^2\right]&=-2\mu\E\left[\|\mathbf{V}_{t}\|_{\V}^2\right]-2\beta\E\left[\langle\mathcal{C}(\Y_{t}^{\x,\y_1})-\mathcal{C}(\Y_{t}^{\x,\y_2}),\mathbf{V}_{t}\rangle\right]\nonumber\\&\quad+2\E\left[(g(\x,\Y_t^{\x,\y_1})-g(\x,\Y_t^{\x,\y_2}),\mathbf{V}_t)\right]\nonumber\\&\quad+\E\left[\|\sigma_2(\x,\Y_t^{\x,\y_1})-\sigma_2(\x,\Y_t^{\x,\y_2})\|_{\mathcal{L}_{\Q_2}}^2\right]\nonumber\\&\leq-\left(2\mu\lambda_1-2L_g-L_{\sigma_2}^2\right)\E\left[\|\mathbf{V}_{t}\|_{\H}^2\right]-\frac{\beta}{2^{r-2}}\E\left[\|\mathbf{V}_{t}\|_{\wi\L^{r+1}}^{r+1}\right]\nonumber\\&\leq-\left(2\mu\lambda_1-2L_g-L_{\sigma_2}^2\right)\E\left[\|\mathbf{V}_{t}\|_{\H}^2\right]. 
\end{align}
An application of Gronwall's inequality in \eqref{3.82} gives the following exponential stability result: 
\begin{align}\label{385}
\sup_{\x\in\H}\E\left[\|\Y_t^{\x,\y_1}-\Y_t^{\x,\y_2}\|_{\H}^2\right]\leq e^{-\zeta t}\|\y_1-\y_2\|_{\H}^2, \ \text{ for all } \ t\geq 0,
\end{align} 
where $\zeta=2\mu\lambda_1-2L_g-L_{\sigma_2}^2>0$. Furthermore, using the invariance of $\nu^{\x}$ and \eqref{3.79}, we obtain 
\begin{align*}
\int_{\H}\|\y\|_{\H}^2\nu^{\x}(\d\y)=\int_{\H}\E\left[\|\Y_t^{\x,\y}\|_{\H}^2\right]\nu^{\x}(\d\y)\leq {C_{\mu,\lambda_1,L_g}}(1+\|\x\|_{\H}^2)+e^{-\gamma t} \int_{\H}\|\y\|_{\H}^2\nu^{\x}(\d\y),
\end{align*}
for any $t>0$. Taking $t$ large enough so that $e^{-\gamma t}<1$ and
\begin{align}\label{386}
\int_{\H}\|\y\|_{\H}^2\nu^{\x}(\d\y)\leq{C_{\mu,\lambda_1,L_g}}(1+\|\x\|_{\H}^2). 
\end{align}
Thus, for any Lipschitz function $\varphi:\H\to\R$, using the invariance of $\nu^{\x}$, \eqref{385} and \eqref{386}, we have 
\begin{align}
	\left|\mathrm{P}_t^{\x}\varphi(\y)-\int_{\H}\varphi(\z)\nu^{\x}(\d\z)\right|&=\left|\int_{\H}\E\left[\varphi(\Y_t^{\x,\y})-\varphi(\Y_t^{\x,\z})\right]\nu^{\x}(\d\z)\right|\nonumber\\&\leq \|\varphi\|_{\mathrm{Lip}}\int_{\H}\E\left[\|\Y_t^{\x,\y}-\Y_t^{\x,\z}\|_{\H}\right]\nu^{\x}(\d\z)\nonumber\\&\leq \|\varphi\|_{\mathrm{Lip}}\int_{\H}\left(\E\left[\|\Y_t^{\x,\y}-\Y_t^{\x,\z}\|_{\H}^2\right]\right)^{1/2}\nu^{\x}(\d\z)\nonumber\\&\leq \|\varphi\|_{\mathrm{Lip}}e^{-\frac{\zeta t}{2}}\int_{\H}\|\y-\z\|_{\H}\nu^{\x}(\d\z)\nonumber\\&\leq C_{\mu,\lambda_1,L_g} \|\varphi\|_{\mathrm{Lip}}(1+\|\x\|_{\H}+\|\y\|_{\H})e^{-\frac{\zeta t}{2}}, 
\end{align}
and by the density of $\text{Lip}(\H)$ in $\C_b (\H)$, we obtain the above result  for every $\varphi\in \C_b (\H)$. In particular, the uniqueness of invariant measure $\nu^{\x}$ follows.
\end{proof} 
The interested readers are referred to see \cite{GDJZ,ADe,FFBM,MHJC}, etc for more details on the invariant measures and ergodicity for the infinite dimensional dynamical systems and stochastic Navier-Stokes equations. We need the following lemma in the sequel. 
\begin{lemma}
	There exists a constant $C>0$ such that for any $\x_1,\x_2,\y\in\H$, we have
	\begin{align}\label{394}
	\sup_{t\geq 0}\E\left[\|\Y_t^{\x_1,\y}-\Y_t^{\x_2,\y}\|_{\H}^2\right]\leq C_{\mu,\lambda_1,L_g,L_{\sigma_2}}\|\x_1-\x_2\|_{\H}^2. 
	\end{align}
\end{lemma}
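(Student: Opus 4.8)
The plan is to mimic the uniqueness argument of Proposition \ref{prop3.12}, the only new feature being that the two copies of the frozen equation \eqref{3.74} now carry different frozen slow variables $\x_1,\x_2$ but a common initial datum $\y$. First I would set $\mathbf{V}_t:=\Y_t^{\x_1,\y}-\Y_t^{\x_2,\y}$, which solves
\begin{equation*}
\left\{
\begin{aligned}
\d\mathbf{V}_t&=-\{\mu\A\mathbf{V}_t+\beta[\mathcal{C}(\Y_t^{\x_1,\y})-\mathcal{C}(\Y_t^{\x_2,\y})]-[g(\x_1,\Y_t^{\x_1,\y})-g(\x_2,\Y_t^{\x_2,\y})]\}\d t\\
&\quad+[\sigma_2(\x_1,\Y_t^{\x_1,\y})-\sigma_2(\x_2,\Y_t^{\x_2,\y})]\d\bar{\W}_t^{\Q_2},\\
\mathbf{V}_0&=\mathbf{0}.
\end{aligned}
\right.
\end{equation*}
Applying the infinite dimensional It\^o formula to $\|\mathbf{V}_t\|_{\H}^2$, taking expectation and differentiating in $t$ (exactly as in \eqref{3.82}) would yield
\begin{align*}
\frac{\d}{\d t}\E\left[\|\mathbf{V}_t\|_{\H}^2\right]&=-2\mu\E\left[\|\mathbf{V}_t\|_{\V}^2\right]-2\beta\E\left[\langle\mathcal{C}(\Y_t^{\x_1,\y})-\mathcal{C}(\Y_t^{\x_2,\y}),\mathbf{V}_t\rangle\right]\\
&\quad+2\E\left[(g(\x_1,\Y_t^{\x_1,\y})-g(\x_2,\Y_t^{\x_2,\y}),\mathbf{V}_t)\right]\\
&\quad+\E\left[\|\sigma_2(\x_1,\Y_t^{\x_1,\y})-\sigma_2(\x_2,\Y_t^{\x_2,\y})\|_{\mathcal{L}_{\Q_2}}^2\right].
\end{align*}

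Next I would estimate the four terms. The damping term drops out by the monotonicity \eqref{214} (equivalently \eqref{2.23}), giving $-2\beta\langle\mathcal{C}(\Y_t^{\x_1,\y})-\mathcal{C}(\Y_t^{\x_2,\y}),\mathbf{V}_t\rangle\leq 0$, while the Stokes term is controlled by the Poincar\'e inequality \eqref{poin}, namely $-2\mu\|\mathbf{V}_t\|_{\V}^2\leq-2\mu\lambda_1\|\mathbf{V}_t\|_{\H}^2$. The crucial difference from \eqref{3.82} is that the Lipschitz bounds in Assumption \ref{ass3.6} (A1) now produce an extra $\x$-contribution, $\|g(\x_1,\Y_t^{\x_1,\y})-g(\x_2,\Y_t^{\x_2,\y})\|_{\H}\leq C\|\x_1-\x_2\|_{\H}+L_g\|\mathbf{V}_t\|_{\H}$ and $\|\sigma_2(\x_1,\Y_t^{\x_1,\y})-\sigma_2(\x_2,\Y_t^{\x_2,\y})\|_{\mathcal{L}_{\Q_2}}\leq C\|\x_1-\x_2\|_{\H}+L_{\sigma_2}\|\mathbf{V}_t\|_{\H}$. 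Squaring and using Young's inequality both on the cross term coming from $g$ and on the square coming from $\sigma_2$ (with a small slack parameter $\theta>0$), the self-interaction pieces contribute $(2L_g+L_{\sigma_2}^2+\theta)\|\mathbf{V}_t\|_{\H}^2$, while the remaining pieces contribute a forcing term $C_\theta\|\x_1-\x_2\|_{\H}^2$ that is constant in $t$.

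Collecting these estimates leads to the differential inequality
\begin{equation*}
\frac{\d}{\d t}\E\left[\|\mathbf{V}_t\|_{\H}^2\right]\leq-\alpha\,\E\left[\|\mathbf{V}_t\|_{\H}^2\right]+C_\theta\|\x_1-\x_2\|_{\H}^2,\qquad \alpha:=2\mu\lambda_1-2L_g-L_{\sigma_2}^2-\theta.
\end{equation*}
Assumption \ref{ass3.6} (A3) guarantees $2\mu\lambda_1-2L_g-L_{\sigma_2}^2\geq\zeta>0$ (indeed $\zeta>\mu\lambda_1-2L_g-2L_{\sigma_2}^2>0$), so $\theta$ can be fixed small enough that $\alpha>0$. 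Since $\mathbf{V}_0=\mathbf{0}$, the variation of constants formula gives
\begin{equation*}
\E\left[\|\mathbf{V}_t\|_{\H}^2\right]\leq\frac{C_\theta}{\alpha}\left(1-e^{-\alpha t}\right)\|\x_1-\x_2\|_{\H}^2\leq\frac{C_\theta}{\alpha}\|\x_1-\x_2\|_{\H}^2
\end{equation*}
for every $t\geq 0$, which is precisely \eqref{394} with $C_{\mu,\lambda_1,L_g,L_{\sigma_2}}=C_\theta/\alpha$; note that the bound is uniform in $t$ and independent of $\y$ exactly because the initial data cancel in $\mathbf{V}_0$.

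The only genuinely delicate point — and the place where Assumption \ref{ass3.6} (A3) is indispensable — is keeping the effective dissipation rate $\alpha$ strictly positive after the $g$- and $\sigma_2$-contributions have been absorbed. Unlike the uniqueness estimate \eqref{3.82}, here the inhomogeneous forcing $\|\x_1-\x_2\|_{\H}^2$ persists for all time, so a purely local (finite-horizon) Gronwall bound would not suffice; the uniform-in-$t$ conclusion hinges on the strict exponential contraction $\alpha>0$, which lets the constant forcing equilibrate at the level $C_\theta\|\x_1-\x_2\|_{\H}^2/\alpha$. Everything else is a routine repetition of the monotonicity, Poincar\'e, Cauchy--Schwarz and Young estimates already used in the proof of Proposition \ref{prop3.12}.
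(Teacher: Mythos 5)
Your proposal is correct and follows essentially the same route as the paper: introduce $\mathbf{W}_t=\Y_t^{\x_1,\y}-\Y_t^{\x_2,\y}$, apply the It\^o formula to $\|\mathbf{W}_t\|_{\H}^2$, drop the damping term by monotonicity, use Poincar\'e together with the Lipschitz bounds on $g$ and $\sigma_2$ to obtain a linear differential inequality with constant forcing $C\|\x_1-\x_2\|_{\H}^2$, and conclude by the variation of constants formula under Assumption \ref{ass3.6} (A3). The only difference is cosmetic bookkeeping of the Young's-inequality splitting (your slack parameter $\theta$ versus the paper's fixed choice yielding the rate $\xi=\mu\lambda_1-2L_g-2L_{\sigma_2}^2$), and both rates are positive under (A3).
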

\begin{proof}
	We know that $\mathbf{W}_{t}:=\Y_{t}^{\x_1,\y}-\Y_{t}^{\x_2,\y}$ satisfies the following system:
	\begin{equation}
	\left\{
\begin{aligned}
\d\mathbf{W}_t&=-[\mu\A\mathbf{W}_t+\beta(\mathcal{C}(\Y_{t}^{\x_1,\y})-\mathcal{C}(\Y_{t}^{\x_2,\y}))]\d t+[g(\x_1,\Y_t^{\x_1,\y})-g(\x_2,\Y_t^{\x_2,\y})]\d t\\&\quad +[\sigma_2(\x_1,\Y_t^{\x_1,\y})-\sigma_2(\x_2,\Y_t^{\x_2,\y})]\d\bar\W_t^{\Q_2},\\
\mathbf{W}_0&=\mathbf{0}. 
\end{aligned}\right. 
	\end{equation}
	Applying the infinite dimensional It\^o formula to the process $\|\mathbf{W}_t\|_{\H}^2$, we find 
	\begin{align}
&	\E\left[\|\mathbf{W}_t\|_{\H}^2\right]+2\mu\E\left[\int_0^t\|\mathbf{W}_s\|_{\V}^2\d s\right]\nonumber\\&=-2\beta\E\left[\int_0^t\langle\mathcal{C}(\Y_{s}^{\x_1,\y})-\mathcal{C}(\Y_{s}^{\x_2,\y}),\mathbf{W}_s\rangle\d s\right]\nonumber\\&\quad+2\E\left[\int_0^t([g(\x_1,\Y_s^{\x_1,\y})-g(\x_2,\Y_s^{\x_2,\y})],\mathbf{W}_s)\d s\right]\nonumber\\&\quad+\E\left[\int_0^t\|\sigma_2(\x_1,\Y_s^{\x_1,\y})-\sigma_2(\x_2,\Y_s^{\x_2,\y})\|_{\mathcal{L}_{\Q_2}}^2\d s\right],
	\end{align}
	so that we have 
	\begin{align}
	\frac{\d}{\d t}	\E\left[\|\mathbf{W}_t\|_{\H}^2\right]&=-2\mu\E\left[\|\mathbf{W}_t\|_{\V}^2\right]-2\beta\E\left[\langle\mathcal{C}(\Y_{t}^{\x_1,\y})-\mathcal{C}(\Y_{t}^{\x_2,\y}),\mathbf{W}_t\rangle\right]\nonumber\\&\quad+2\E\left[([g(\x_1,\Y_t^{\x_1,\y})-g(\x_2,\Y_t^{\x_2,\y})],\mathbf{W}_t)\right]\nonumber\\&\quad+\E\left[\|\sigma_2(\x_1,\Y_t^{\x_1,\y})-\sigma_2(\x_2,\Y_t^{\x_2,\y})\|_{\mathcal{L}_{\Q_2}}^2\right]\nonumber\\&\leq-2\mu\lambda_1\E\left[\|\mathbf{W}_t\|_{\H}^2\right]-\frac{\beta}{2^{r-2}}\E\left[\|\mathbf{W}_t\|_{\wi\L^{r+1}}^{r+1}\right]\nonumber\\&\quad+2\E\left[C\|\x_1-\x_2\|_{\H}\|\mathbf{W}_t\|_{\H}+L_g\|\mathbf{W}_t\|_{\H}^2\right]\nonumber\\&\quad+\E\left[\left(C\|\x_1-\x_2\|_{\H}+L_{\sigma_2}\|\mathbf{W}_t\|_{\H}\right)^2\right]\nonumber\\&\leq -\left(\mu\lambda_1-2L_g-2L_{\sigma_2}^2\right)\E\left[\|\mathbf{W}_t\|_{\H}^2\right]+C\left(1+\frac{1}{\mu\lambda_1}\right)\|\x_1-\x_2\|_{\H}^2,
	\end{align}
	for a.e. $t\in[0,T]$. Using variation of constants formula, we further have 
	\begin{align}
	\E\left[\|\mathbf{W}_t\|_{\H}^2\right]\leq C\left(1+\frac{1}{\mu\lambda_1}\right)\|\x_1-\x_2\|_{\H}^2\int_0^te^{-\xi(t-s)}\d s\leq \frac{C}{\xi}\left(1+\frac{1}{\mu\lambda_1}\right)\|\x_1-\x_2\|_{\H}^2,
	\end{align}
for any $t>0$, where $\xi=\mu\lambda_1-2L_g-2L_{\sigma_2}^2>0$ and the estimate \eqref{394} follows. 
\end{proof}
\subsection{Averaged equation}
Let us now consider the following averaged equation: 
\begin{equation}\label{3.85}
\left\{
\begin{aligned}
\d\bar{\X}_t&=-[\mu\A\bar{\X}_t+\B(\bar{\X}_t)+\beta\mathcal{C}(\bar{\X}_t)]\d t+\bar{f}(\bar{\X}_t)\d t+\sigma_1(\bar{\X}_t)\d\W^{\Q_1}_t, \\ \bar{\X}_0&=\x, 
\end{aligned}\right. 
\end{equation}
with the average 
\begin{align*}
\overline{f}(\x)=\int_{\H}f(\x,\y)\nu^{\x}(\d\y), \ \x\in\H, 
\end{align*}
where $\nu^{\x}$ is the unique invariant measure for the system \eqref{3.74}. Since $f(\cdot,\cdot)$ is Lipschitz, one can show that $\bar{f}(\cdot)$ is Lipschitz in the following way. Using the Assumption \ref{ass3.6} (A1), \eqref{393} and \eqref{394}, we have 
\begin{align*}
&\|\bar{f}(\x_1)-\bar{f}(\x_2)\|_{\H}\nonumber\\&=\left\|\int_{\H}f(\x_1,\z)\mu^{\x_1}(\d\z)-\int_{\H}f(\x_1,\z)\mu^{\x_1}(\d\z)\right\|_{\H}\nonumber\\&\leq\left\|\int_{\H}f(\x_1,\z)\mu^{\x_1}(\d\z)-\E\left[f(\x_1,\Y_t^{\x_1,\y})\right]\right\|_{\H}+\left\|\E\left[f(\x_2,\Y_t^{\x_2,\y})\right]-\int_{\H}f(\x_1,\z)\mu^{\x_1}(\d\z)\right\|_{\H}\nonumber\\&\quad+\|\E\left[f(\x_1,\Y_t^{\x_1,\y})\right]-\E\left[f(\x_2,\Y_t^{\x_2,\y})\right]\|_{\H}\nonumber\\&\leq C_{\mu,\lambda_1,L_g}(1+\|\x_1\|_{\H}+\|\x_2\|_{\H}+\|\y\|_{\H})e^{-\frac{\zeta t}{2}}+C\left(\|\x_1-\x_2\|_{\H}+\E\left[\|\Y_t^{\x_1,\y}-\Y_t^{\x_2,\y}\|_{\H}\right]\right)\nonumber\\&\leq C_{\mu,\lambda_1,L_g}(1+\|\x_1\|_{\H}+\|\x_2\|_{\H}+\|\y\|_{\H})e^{-\frac{\zeta t}{2}}+C_{\mu,\lambda_1,L_g,L_{\sigma_2}}\|\x_1-\x_2\|_{\H}. 
\end{align*}
Taking $t\to\infty$, we get 
\begin{align}\label{3p93}
&\|\bar{f}(\x_1)-\bar{f}(\x_2)\|_{\H}\leq C_{\mu,\lambda_1,L_g,L_{\sigma_2}}\|\x_1-\x_2\|_{\H}. 
\end{align}
Since the system \eqref{3.85} is a Lipschitz perturbation of the SCBF equations, one can show that it has a unique strong solution as in Theorem 3.7, \cite{MTM8}. The following result can be proved in a similar way as in Lemmas \ref{lem3.7} and \ref{lem3.8}.

\begin{lemma}\label{lem3.13}
	For any $T>0$, $p\geq 1$, there exists a constant $C_{p,\mu,\lambda_1,L_g,L_{\sigma_2},T}>0$ such that for any $\x\in\H$, we have 
	\begin{align}\label{3.86}
&	\E\left[\sup_{t\in[0,T]}\|\bar{\X}_t\|_{\H}^{2p}\right]+2p\mu\E\left[\int_0^T\|\bar{\X}_t\|_{\H}^{2p-2}\|\X_t\|_{\V}^2\d t\right]+2p\beta\E\left[\int_0^T\|\bar{\X}_t\|_{\H}^{2p-2}\|\X_t\|_{\wi\L^{r+1}}^{r+1}\d t\right]\nonumber\\&\leq C_{p,\mu,\lambda_1,L_g,L_{\sigma_2},T}\left(1+\|\x\|_{\H}^{2p}\right), 
	\end{align}
	and 
		\begin{align}\label{3.87}
	\E\left[\int_0^T\|\bar\X_t-\bar\X_{t(\delta)}\|_{\H}^2\d t \right]\leq C_{p,\mu,\lambda_1,L_g,L_{\sigma_2},T}\delta^{1/2}(1+\|\x\|_{\H}^{\ell}+\|\y\|_{\H}^{\ell}), 
	\end{align}
where $\ell=3$, for $n=2$,  $r\in[1,3)$, and $\ell=2$,  for $n=2,3$, $r\in[3,\infty)$  ($2\beta\mu\geq 1,$ for $n=r=3$). 
\end{lemma}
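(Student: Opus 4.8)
The plan is to prove Lemma \ref{lem3.13} by mirroring the arguments already established for the slow component $\X^{\e}_t$ in Lemmas \ref{lem3.7} and \ref{lem3.8}, noting that the averaged equation \eqref{3.85} differs from the slow equation \eqref{3.6} only in that the coupling term $f(\X^{\e}_t,\Y^{\e}_t)$ is replaced by the averaged drift $\bar{f}(\bar{\X}_t)$, which by \eqref{3p93} is globally Lipschitz with linear growth, and there is no fast-scale $\frac1\e$ factor to track. First I would establish the $2p$-th moment bound \eqref{3.86}. Applying the infinite dimensional It\^o formula to $\|\bar{\X}_t\|_{\H}^2$ and then to $(\|\bar{\X}_t\|_{\H}^2)^p$ exactly as in \eqref{3.19}, I would use $\langle\B(\bar{\X}),\bar{\X}\rangle=0$, the non-negativity of $\langle\mathcal{C}(\bar{\X}),\bar{\X}\rangle=\|\bar{\X}\|_{\wi\L^{r+1}}^{r+1}$, the linear-growth bound $\|\bar{f}(\x)\|_{\H}\leq\|\bar{f}(\mathbf{0})\|_{\H}+C\|\x\|_{\H}$ coming from \eqref{3p93}, together with Assumption \ref{ass3.6} (A1) on $\sigma_1$ and the Burkholder--Davis--Gundy inequality handled precisely as in \eqref{3.21}. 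This yields, after Gronwall's inequality, the estimate \eqref{3.86}; note that since $\bar{\X}_t$ has no explicit $\y$-dependence (the dependence on $\y$ in \eqref{3.87} enters only through the exponent $\ell$ bookkeeping inherited from the comparison lemmas), the bound depends only on $\|\x\|_{\H}$.

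Next I would prove the time-increment estimate \eqref{3.87}. Following the proof of Lemma \ref{lem3.8} verbatim, I would split $\E\int_0^T\|\bar\X_t-\bar\X_{t(\delta)}\|_{\H}^2\d t$ using the triangle inequality into a contribution from $[0,\delta]$ (controlled by \eqref{3.86}) plus $\E\int_\delta^T\|\bar\X_t-\bar\X_{t-\delta}\|_{\H}^2\d t$ and $\E\int_\delta^T\|\bar\X_{t(\delta)}-\bar\X_{t-\delta}\|_{\H}^2\d t$. Then, applying It\^o's formula to $\|\bar\X_r-\bar\X_{t-\delta}\|_{\H}^2$ on $[t-\delta,t]$, I obtain six terms $I_1,\dots,I_6$ analogous to those in Lemma \ref{lem3.8}: the linear term, the bilinear term (split into the $n=2$, $r\in[1,3)$ case via Ladyzhenskaya and the $n=2,3$, $r\geq3$ case via H\"older and interpolation, exactly as there), the Forchheimer term, the drift term, the It\^o correction, and the stochastic integral treated by Burkholder--Davis--Gundy. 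The only modification is replacing the $f(\X^{\e}_s,\Y^{\e}_s)$ estimate by $\|\bar{f}(\bar\X_s)\|_{\H}\leq C(1+\|\bar\X_s\|_{\H})$, which gives a cleaner bound since the drift grows linearly in a single variable. Each term is $O(\delta)$ except the stochastic term, which is $O(\delta^{1/2})$, so the overall rate $\delta^{1/2}$ is preserved and \eqref{3.87} follows with the same exponent $\ell$ as in \eqref{3.24}--\eqref{324}.

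The main obstacle, though minor here, is confirming that the linear-growth and Lipschitz constants for $\bar{f}$ derived in \eqref{3p93} genuinely carry the same structure as the constants $C$ appearing in Assumption \ref{ass3.6} (A1), so that the constant $C_{p,\mu,\lambda_1,L_g,L_{\sigma_2},T}$ legitimately absorbs all dependencies on $\mu,\lambda_1,L_g,L_{\sigma_2}$ and $T$; since \eqref{3p93} was obtained using Proposition \ref{prop3.12} and the ergodic estimate \eqref{394}, the Lipschitz constant of $\bar{f}$ indeed depends on exactly these parameters, so no new constants are introduced. I therefore expect the proof to require no genuinely new estimates, and the statement that it "can be proved in a similar way as in Lemmas \ref{lem3.7} and \ref{lem3.8}" is fully justified; the work is entirely in verifying that the replacement of the two-variable coupling $f(\X^{\e},\Y^{\e})$ by the single-variable averaged drift $\bar{f}(\bar{\X})$ does not disrupt any of the six term-by-term bounds, which it does not.
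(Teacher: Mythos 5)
Your proposal is correct and follows exactly the route the paper intends: the paper gives no separate proof of Lemma \ref{lem3.13}, stating only that it ``can be proved in a similar way as in Lemmas \ref{lem3.7} and \ref{lem3.8},'' and your term-by-term adaptation (replacing $f(\X^{\e},\Y^{\e})$ by the Lipschitz, linearly growing $\bar f(\bar\X)$ from \eqref{3p93}, with no $\frac{1}{\e}$ factors to track) is precisely that adaptation. Your observation that the $\|\y\|_{\H}^{\ell}$ term in \eqref{3.87} is merely inherited bookkeeping is also consistent with the paper.
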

Our next aim is to establish an estimate for the process ${\X}^{\e}_{t}-\bar{\X}_{t}$. For $n=2$ and $r\in[1,3]$, we need to construct a stopping time to obtain an estimate for $\|\X^{\e}_t-\bar\X_t\|_{\H}^2$. For fixed $\e\in(0,1)$ and $R>0$, we define 
\begin{align}\label{stop}
\tau^{\e}_R:=\inf_{t\geq 0}\left\{t:\int_0^t\|\X^{\e}_s\|_{\V}^2\d s\geq R\right\}. 
\end{align}
We see in the next Lemma that such a stopping time is not needed for $r\in(3,\infty)$ for $n=2$ and $r\in[3,\infty)$ for $n=3$ ($2\beta\mu\geq 1,$ for $r=n=3$). 

\begin{lemma}\label{lem3.14}
	For any $\x,\y\in\H$, $T>0$ and $\e\in(0,1)$, the following estimate holds: 
	\begin{align}\label{389}
	\E\left[\sup_{t\in[0,T\wedge\tau_R^{\e}]}\|\X_t^{\e}-\bar{\X}_t\|_{\H}^2\right]\leq C_{R,\mu,\lambda_1,L_g,L_{\sigma_2},T}\left(1+\|\x\|_{\H}^3+\|\y\|_{\H}^3\right)\left(\frac{\e}{\delta}+\frac{\e^{1/2}}{\delta^{1/2}}+\delta^{1/4}\right), 
	\end{align}	
	for $n=2$ and $r\in[1,3]$, and 
	\begin{align}\label{390}
	\E\left[\sup_{t\in[0,T]}\|\X_t^{\e}-\bar{\X}_t\|_{\H}^2\right]\leq C_{\mu,\lambda_1,L_g,L_{\sigma_2},T}\left(1+\|\x\|_{\H}^2+\|\y\|_{\H}^2\right)\left(\frac{\e}{\delta}+\frac{\e^{1/2}}{\delta^{1/2}}+\delta^{1/4}\right), 
	\end{align}	
	for $n=2$, $r\in(3,\infty)$ and $n=3$, $r\in[3,\infty)$ ($2\beta\mu\geq 1,$ for $r=3$). 
\end{lemma}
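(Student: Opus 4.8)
The plan is to set $\Z_t^{\e} := \X_t^{\e}-\bar\X_t$, which solves
\begin{align*}
\d\Z_t^{\e} &= -[\mu\A\Z_t^{\e}+(\B(\X_t^{\e})-\B(\bar\X_t))+\beta(\mathcal{C}(\X_t^{\e})-\mathcal{C}(\bar\X_t))]\d t\\
&\quad+[f(\X_t^{\e},\Y_t^{\e})-\bar f(\bar\X_t)]\d t+[\sigma_1(\X_t^{\e})-\sigma_1(\bar\X_t)]\d\W_t^{\Q_1},
\end{align*}
with $\Z_0^{\e}=\mathbf{0}$, and to apply the infinite dimensional It\^o formula to $\|\Z_t^{\e}\|_{\H}^2$. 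The operator terms $\mu\A+\B+\beta\mathcal{C}$ are treated exactly as in the uniqueness arguments: for $n=2,3$ and $r\in(3,\infty)$ (and $r=n=3$ with $2\beta\mu\geq1$) the global monotonicity of Lemmas \ref{thm2.2}--\ref{thm2.3} lets me absorb them into a Gronwall term $\int_0^t\|\Z_s^{\e}\|_{\H}^2\d s$ with no stopping time; for $n=2$ and $r\in[1,3]$ only the local ball monotonicity \eqref{fe2} is available, so I introduce the stopping time $\tau_R^{\e}$ from \eqref{stop} and work on $[0,t\wedge\tau_R^{\e}]$, where $\int_0^{t\wedge\tau_R^{\e}}\|\X_s^{\e}\|_{\V}^2\d s\le R$ keeps the exponential prefactor bounded.

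The forcing discrepancy is split as
\begin{align*}
f(\X_t^{\e},\Y_t^{\e})-\bar f(\bar\X_t)&=\big[f(\X_t^{\e},\Y_t^{\e})-f(\X_{t(\delta)}^{\e},\widehat\Y_t^{\e})\big]+\big[f(\X_{t(\delta)}^{\e},\widehat\Y_t^{\e})-\bar f(\X_{t(\delta)}^{\e})\big]\\
&\quad+\big[\bar f(\X_{t(\delta)}^{\e})-\bar f(\bar\X_t)\big].
\end{align*}
The first bracket, by the Lipschitz property (A1), is dominated by $\|\X_t^{\e}-\X_{t(\delta)}^{\e}\|_{\H}+\|\Y_t^{\e}-\widehat\Y_t^{\e}\|_{\H}$, whose time integrals are controlled by Lemma \ref{lem3.8} and Lemma \ref{lem3.10}, each furnishing a $\delta^{1/2}$ (hence, after a Cauchy--Schwarz pairing with $\Z_s^{\e}$, a $\delta^{1/4}$). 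The third bracket uses the Lipschitz continuity of $\bar f$ from \eqref{3p93} together with the time-increment bound \eqref{3.87} for $\bar\X$, again yielding a $\delta^{1/4}$ contribution.

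The main obstacle is the averaging bracket $f(\X_{t(\delta)}^{\e},\widehat\Y_t^{\e})-\bar f(\X_{t(\delta)}^{\e})$, which does not vanish pathwise and must be handled in an averaged sense. I first replace $\Z_t^{\e}$ by the piecewise-constant $\Z_{t(\delta)}^{\e}$ (the error is one more increment term). On each block $[k\delta,(k+1)\delta]$ the auxiliary equation \eqref{338} has frozen slow input $\X_{k\delta}^{\e}$; the time rescaling $s=k\delta+\e r$ converts $\widehat\Y^{\e}$ into the frozen process $\Y_r^{\X_{k\delta}^{\e},\widehat\Y_{k\delta}^{\e}}$ of \eqref{3.74} driven by a rescaled $\Q_2$-Wiener process. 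Conditioning on $\mathscr{F}_{k\delta}$ and using $\bar f(\x)=\int_{\H}f(\x,\z)\nu^{\x}(\d\z)$ with the exponential ergodic estimate \eqref{393} of Proposition \ref{prop3.12} applied to $\varphi=f(\x,\cdot)$, I obtain
\begin{align*}
\left\|\E\left[\int_{k\delta}^{(k+1)\delta}\big(f(\X_{k\delta}^{\e},\widehat\Y_s^{\e})-\bar f(\X_{k\delta}^{\e})\big)\d s\;\Big|\;\mathscr{F}_{k\delta}\right]\right\|_{\H}\le C\e\int_0^{\infty}e^{-\zeta r/2}\d r\,(1+\|\X_{k\delta}^{\e}\|_{\H}+\|\widehat\Y_{k\delta}^{\e}\|_{\H}),
\end{align*}
which is $O(\e)$ uniformly in $\delta$. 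Summing the $\sim T/\delta$ blocks and pairing with $\Z_{k\delta}^{\e}$ (bounded in $L^2$ by Lemmas \ref{lem3.7} and \ref{lem3.13}) produces the $\e/\delta$ term, while the fluctuation of each block integral about its conditional mean, estimated through the second moment of $\widehat\Y^{\e}$ in Lemma \ref{lem3.9}, contributes the $(\e/\delta)^{1/2}=\e^{1/2}/\delta^{1/2}$ term.

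Finally, assembling all contributions, applying the Burkholder--Davis--Gundy inequality to the stochastic integral arising from $\sigma_1(\X_s^{\e})-\sigma_1(\bar\X_s)$ with the Lipschitz bound on $\sigma_1$, and closing with Gronwall's inequality yields
\[
\E\Big[\sup_{t\in[0,T\wedge\tau_R^{\e}]}\|\Z_t^{\e}\|_{\H}^2\Big]\le C\Big(\tfrac{\e}{\delta}+\tfrac{\e^{1/2}}{\delta^{1/2}}+\delta^{1/4}\Big)
\]
for $n=2$, $r\in[1,3]$ (the cubic dependence on the data coming from the Ladyzhenskaya estimate of the $\B$ term), and the same bound without the stopping time for $n=2,3$, $r\in[3,\infty)$. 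I expect the delicate point to be the rigorous justification of the rescaling/conditioning step that turns the block average into the frozen transition semigroup, along with the bookkeeping that keeps every constant independent of $\e$.
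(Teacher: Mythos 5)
Your proposal follows essentially the same route as the paper: the same It\^o/Gronwall setup with the stopping time $\tau_R^{\e}$ for $n=2$, $r\in[1,3]$ and global monotonicity of $\mu\A+\B+\beta\mathcal{C}$ otherwise, the same splitting of the forcing discrepancy (yours is a regrouping of the paper's four brackets), and the same Khasminskii block decomposition with time rescaling to the frozen equation and the mixing estimate \eqref{393}. The one under-specified step is your fluctuation bound: the $(\e/\delta)^{1/2}$ term does not follow from the second moment of $\widehat\Y^{\e}$ alone (that would only yield an $O(1)$ contribution after summing the $T/\delta$ blocks); it requires the two-time correlation decay $\Phi_k(s,r)\lesssim(1+\|\x\|_{\H}^2+\|\y\|_{\H}^2)e^{-\zeta(s-r)/2}$, which the paper derives in \eqref{3114} from the Markov property of the frozen process together with \eqref{393} --- exactly the ingredients you already have on the table for the conditional-mean estimate.
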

\begin{proof}
	Let us denote $\Z_{t}^{\e}:=\X_{t}^{\e}-\bar{\X}_{t}$. Then $\Z^{\e}_{t}$ satisfies the following  It\^o stochastic differential: 
		\begin{equation}\label{3.91}
	\left\{
	\begin{aligned}
	\d\Z_t^{\e}&=-[\mu\A\Z_t^{\e}+(\B(\X_t^{\e})-\B(\bar\X_t))+\beta(\mathcal{C}(\X_t^{\e})-\mathcal{C}(\bar\X_t))]\d t\\&\quad+[f(\X_t^{\e},\Y_t^{\e})-\bar f(\bar\X_{t})]\d t+[\sigma_1(\X_t^{\e})-\sigma_1(\bar\X_t)]\d\W_t^{\Q_1},\\
	\Z_0^{\e}&=\mathbf{0}. 
	\end{aligned}\right. 
	\end{equation}
	We first consider the case $n=2$ and $r\in[1,3]$. An application of the infinite dimensional It\^o formula to the process $\|\Z^{\e}_{t}\|_{\H}^2$ yields 
	\begin{align}\label{3.92}
&	\|\Z^{\e}_{t}\|_{\H}^2\nonumber\\&=-2 \mu\int_0^t\|\Z^{\e}_s\|_{\V}^2\d s-2\int_0^t\langle (\B(\X_s^{\e})-\B(\bar\X_s)),\Z^{\e}_s\rangle\d s-2\beta\int_0^t\langle\mathcal{C}(\X_s^{\e})-\mathcal{C}(\bar\X_s),\Z^{\e}_s\rangle\d s\nonumber\\&\quad+2\int_0^t(f(\X_s^{\e},\Y_s^{\e})-\bar f(\bar\X_{s}),\Z^{\e}_s)\d s+\int_0^t\|\sigma_1(\X_s^{\e})-\sigma_1(\bar\X_s)\|_{\mathcal{L}_{\Q_1}}^2\d s\nonumber\\&\quad+2\int_0^t(\sigma_1(\X_s^{\e})-\sigma_1(\bar\X_s)\d\W^{\Q_1}_s,\Z^{\e}_s)\nonumber\\&=- 2 \mu\int_0^t\|\Z^{\e}_s\|_{\V}^2\d s-2\int_0^t\langle (\B(\X_s^{\e})-\B(\bar\X_s)),\Z^{\e}_s\rangle\d s-2\beta\int_0^t\langle\mathcal{C}(\X_s^{\e})-\mathcal{C}(\bar\X_s),\Z^{\e}_s\rangle\d s\nonumber\\&\quad+2\int_0^t(\bar f(\X_s^{\e})-\bar f(\bar\X_{s}),\Z^{\e}_s)\d s+2\int_0^t(f(\X_s^{\e},\Y_s^{\e})-\bar f(\X_{s}^{\e})-f(\X_{s(\delta)}^{\e},\widehat\Y_s^{\e})+\bar f(\X_{s(\delta)}^{\e}),\Z^{\e}_s)\d s\nonumber\\&\quad+2\int_0^t(f(\X_{s(\delta)}^{\e},\widehat\Y_s^{\e})-\bar f(\X_{s(\delta)}^{\e}),\Z^{\e}_s-\Z^{\e}_{s(\delta)})\d s+2\int_0^t(f(\X_{s(\delta)}^{\e},\widehat\Y_s^{\e})-\bar f(\X_{s(\delta)}^{\e}),\Z^{\e}_{s(\delta)})\d s\nonumber\\&\quad +\int_0^t\|\sigma_1(\X_s^{\e})-\sigma_1(\bar\X_s)\|_{\mathcal{L}_{\Q_1}}^2\d s+2\int_0^t(\sigma_1(\X_s^{\e})-\sigma_1(\bar\X_s)\d\W^{\Q_1}_s,\Z^{\e}_s),\  \mathbb{P}\text{-a.s.},
	\end{align}
	for all $t\in[0,T]$. Using H\"older's, Ladyzhenskaya's and Young's inequalities, we estimate $2|\langle(\B(\X_s^{\e})-\B(\bar\X_s)),\Z^{\e}_s\rangle|$ as 
	\begin{align}\label{3.93}
	2|	\langle(\B(\X_s^{\e})-\B(\bar\X_s)),\Z^{\e}_s\rangle|&= 2|\langle\B(\Z_s^{\e},\X_s^{\e}),\Z_s^{\e}\rangle |\leq 2\|\X_s^{\e}\|_{\V}\|\Z_s^{\e}\|_{\wi\L^4}^2\leq 2\sqrt{2}\|\X_s^{\e}\|_{\V}\|\Z_s^{\e}\|_{\H}\|\Z_s^{\e}\|_{\V}\nonumber\\&\leq\mu\|\Z_s^{\e}\|_{\V}^2+\frac{2}{\mu}\|\X_{s}^{\e}\|_{\V}^2\|\Z^{\e}_s\|_{\H}^2. 
	\end{align}
	Using \eqref{2.23} and \eqref{a215}, we know that 
	\begin{align}
	-2\beta\langle\mathcal{C}(\X_s^{\e})-\mathcal{C}(\bar\X_s),\Z^{\e}_s\rangle\leq-\frac{\beta}{2^{r-2}}\|\Z^{\e}_s\|_{\wi\L^{r+1}}^{r+1},
	\end{align}
	for $r\in[1,\infty)$. Using \eqref{3p93}, H\"older's and Young's inequalities, we estimate $2(\bar f(\X_s^{\e})-\bar f(\bar\X_{s}),\Z^{\e}_s)$ as 
	\begin{align}
	2(\bar f(\X_s^{\e})-\bar f(\bar\X_{s}),\Z^{\e}_s)&\leq 2\|\bar f(\X_s^{\e})-\bar f(\bar\X_{s})\|_{\H}\|\Z^{\e}_s\|_{\H}\leq C_{\mu,\lambda_1,L_g,L_{\sigma_2}}\|\Z_s^{\e}\|_{\H}^2. 
	\end{align}
	Similarly, we estimate $2(f(\X_s^{\e},\Y_s^{\e})-\bar f(\X_{s}^{\e})-f(\X_{s(\delta)}^{\e},\widehat\Y_s^{\e})+\bar f(\X_{s(\delta)}^{\e}),\Z^{\e}_s)$ as 
	\begin{align}
	&2(f(\X_s^{\e},\Y_s^{\e})-\bar f(\X_{s}^{\e})-f(\X_{s(\delta)}^{\e},\widehat\Y_s^{\e})+\bar f(\X_{s(\delta)}^{\e}),\Z^{\e}_s)\nonumber\\&\leq 2\left(\|f(\X_s^{\e},\Y_s^{\e})-f(\X_{s(\delta)}^{\e},\widehat\Y_s^{\e})\|_{\H}+\|\bar f(\X_{s(\delta)}^{\e})-\bar f(\X_{s}^{\e})\|_{\H}\right)\|\Z^{\e}_s\|_{\H}\nonumber\\&\leq\|\Z^{\e}_s\|_{\H}^2+C_{\mu,\lambda_1,L_g,L_{\sigma_2}}(\|\X_{s}^{\e}-\X_{s(\delta)}^{\e}\|_{\H}^2+\|\Y_s^{\e}-\widehat\Y_s^{\e}\|_{\H}^2).
	\end{align}
	Once again using the Assumption \ref{ass3.6} (A1), we get 
	\begin{align}
	\|\sigma_1(\X_s^{\e})-\sigma_1(\bar\X_s)\|_{\mathcal{L}_{\Q_1}}^2\leq C\|\Z^{\e}_s\|_{\H}^2. 
	\end{align}
	Using the Assumption \ref{ass3.6} (A1), H\"older's and Young's inequalities, we estimate the term  $2\int_0^t(f(\X_{s(\delta)}^{\e},\widehat\Y_s^{\e})-\bar f(\X_{s(\delta)}^{\e}),\Z^{\e}_s-\Z^{\e}_{s(\delta)})\d s$ as 
	\begin{align}\label{3.97}
	&2\int_0^t(f(\X_{s(\delta)}^{\e},\widehat\Y_s^{\e})-\bar f(\X_{s(\delta)}^{\e}),\Z^{\e}_s-\Z^{\e}_{s(\delta)})\d s\nonumber\\&\leq 2\int_0^t \|f(\X_{s(\delta)}^{\e},\widehat\Y_s^{\e})-\bar f(\X_{s(\delta)}^{\e})\|_{\H}\|\Z^{\e}_s-\Z^{\e}_{s(\delta)}\|_{\H}\d s\nonumber\\&\leq 4\left(\int_0^t(\|f(\X_{s(\delta)}^{\e},\widehat\Y_s^{\e})\|_{\H}^2+\|\bar f(\X_{s(\delta)}^{\e})\|_{\H}^2)\d s\right)^{1/2}\left(\int_0^t(\|\X^{\e}_{s}-\X^{\e}_{s(\delta)}\|_{\H}^2+\|\bar\X_s-\bar\X_{s(\delta)}\|_{\H}^2)\d s\right)^{1/2}\nonumber\\&\leq C\left(\int_0^t(1+\|\X_{s(\delta)}^{\e}\|_{\H}^2+\|\widehat\Y_s^{\e}\|_{\H}^2)\d s\right)^{1/2}\left(\int_0^t(\|\X^{\e}_{s}-\X^{\e}_{s(\delta)}\|_{\H}^2+\|\bar\X_s-\bar\X_{s(\delta)}\|_{\H}^2)\d s\right)^{1/2}.
	\end{align}
	Combining \eqref{3.93}-\eqref{3.97}, and then substituting it in \eqref{3.92}, we obtain 
	\begin{align}\label{3.99} 
	&	\|\Z^{\e}_{t}\|_{\H}^2+\mu\int_0^t\|\Z^{\e}_s\|_{\V}^2\d s+\frac{\beta}{2^{r-2}}\int_0^t\|\Z^{\e}_s\|_{\wi\L^{r+1}}^{r+1}\d s\nonumber\\&\leq \frac{2}{\mu}\int_0^t\|\X_{s}^{\e}\|_{\V}^2\|\Z^{\e}_s\|_{\H}^2\d s+C_{\mu,\lambda_1,L_g,L_{\sigma_2}}\int_0^t\|\Z^{\e}_s\|_{\H}^2\d s\nonumber\\&\quad+C_{\mu,\lambda_1,L_g,L_{\sigma_2}}\int_0^t\|\X_{s}^{\e}-\X_{s(\delta)}^{\e}\|_{\H}^2\d s+C_{\mu,\lambda_1,L_g,L_{\sigma_2}}\int_0^t\|\Y_s^{\e}-\widehat\Y_s^{\e}\|_{\H}^2\d s\nonumber\\&\quad+C\left(\int_0^t(1+\|\X_{s(\delta)}^{\e}\|_{\H}^2+\|\widehat\Y_s^{\e}\|_{\H}^2)\d s\right)^{1/2}\left(\int_0^t(\|\X^{\e}_{s}-\X^{\e}_{s(\delta)}\|_{\H}^2+\|\bar\X_s-\bar\X_{s(\delta)}\|_{\H}^2)\d s\right)^{1/2} \nonumber\\&\quad +2\int_0^t(f(\X_{s(\delta)}^{\e},\widehat\Y_s^{\e})-\bar f(\X_{s(\delta)}^{\e}),\Z^{\e}_{s(\delta)})\d s+2\int_0^t(\sigma_1(\X_s^{\e})-\sigma_1(\bar\X_s)\d\W^{\Q_1}_s,\Z^{\e}_s), 
	\end{align}
	for all $t\in[0,T]$. An application of the Gronwall inequality in \eqref{3.99} gives 
	\begin{align}\label{3100} 
	\sup_{t\in[0,T\wedge\tau_R^{\e}]}	\|\Z^{\e}_{t}\|_{\H}^2&\leq C_{R,\mu,\lambda_1,L_g,L_{\sigma_2},T}\bigg\{\int_0^{T\wedge\tau_R^{\e}}\|\X_{s}^{\e}-\X_{s(\delta)}^{\e}\|_{\H}^2\d s+\int_0^{T\wedge\tau_R^{\e}}\|\Y_s^{\e}-\widehat\Y_s^{\e}\|_{\H}^2\d s\nonumber\\&\quad+\left(\int_0^{T\wedge\tau_R^{\e}}(1+\|\X_{s(\delta)}^{\e}\|_{\H}^2+\|\widehat\Y_s^{\e}\|_{\H}^2)\d s\right)^{1/2}\nonumber\\&\qquad\times\left(\int_0^{T\wedge\tau_R^{\e}}(\|\X^{\e}_{s}-\X^{\e}_{s(\delta)}\|_{\H}^2+\|\bar\X_s-\bar\X_{s(\delta)}\|_{\H}^2)\d s\right)^{1/2}\nonumber\\&\quad+	\sup_{t\in[0,T\wedge\tau_R^{\e}]}\left|\int_0^t(f(\X_{s(\delta)}^{\e},\widehat\Y_s^{\e})-\bar f(\X_{s(\delta)}^{\e}),\Z^{\e}_{s(\delta)})\d s\right|\nonumber\\&\quad+	\sup_{t\in[0,T\wedge\tau_R^{\e}]}\left|\int_0^t(\sigma_1(\X_s^{\e})-\sigma_1(\bar\X_s)\d\W^{\Q_1}_s,\Z^{\e}_s)\right|\bigg\}, 
	\end{align}
	where we used the definition of stopping time given in \eqref{stop} also. Taking expectation on both sides of \eqref{3100} and then using Lemmas \ref{lem3.7}-\ref{lem3.13} (see \eqref{3.24}, \eqref{3.42} and \eqref{3.87}), we obtain 
	\begin{align}\label{3101}
	\E\left[	\sup_{t\in[0,T\wedge\tau_R^{\e}]}	\|\Z^{\e}_{t}\|_{\H}^2\right]&\leq C_{R,\mu,\lambda_1,L_g,L_{\sigma_2},T}\left(1+\|\x\|_{\H}^3+\|\y\|_{\H}^3\right)\delta^{1/4}\nonumber\\&\quad+C_{R,\mu,\lambda_1,L_g,L_{\sigma_2},T}\E\left[\sup_{t\in[0,T\wedge\tau_R^{\e}]}\left|\int_0^t(f(\X_{s(\delta)}^{\e},\widehat\Y_s^{\e})-\bar f(\X_{s(\delta)}^{\e}),\Z^{\e}_{s(\delta)})\d s\right|\right]\nonumber\\&\quad+C_{R,\mu,\lambda_1,L_g,L_{\sigma_2},T}\E\left[\sup_{t\in[0,T\wedge\tau_R^{\e}]}\left|\int_0^t(\sigma_1(\X_s^{\e})-\sigma_1(\bar\X_s)\d\W^{\Q_1}_s,\Z^{\e}_s)\right|\right]. 
	\end{align}
	Using the Burkholder-Davis-Gundy inequality and Assumption \ref{ass3.6} (A1), we estimate the final term from the right hand side of the inequality \eqref{3101} as 
	\begin{align}\label{3102}
&C_{R,\mu,\lambda_1,L_g,L_{\sigma_2},T}\E\left[\sup_{t\in[0,T\wedge\tau_R^{\e}]}\left|\int_0^t(\sigma_1(\X_s^{\e})-\sigma_1(\bar\X_s)\d\W^{\Q_1}_s,\Z^{\e}_s)\right|\right]\nonumber\\&\leq C_{R,\mu,\lambda_1,L_g,L_{\sigma_2},T} \E\left[\int_0^{T\wedge\tau_R^{\e}}\|\sigma_1(\X_s^{\e})-\sigma_1(\bar\X_s)\|_{\mathcal{L}_{\Q_1}}^2\|\Z^{\e}_s\|_{\H}^2\d s\right]^{1/2}\nonumber\\&\leq C_{R,\mu,\lambda_1,L_g,L_{\sigma_2},T}\E\left[\sup_{s\in[0,T\wedge\tau_R^{\e}]}\|\Z^{\e}_s\|_{\H}\left(\int_0^{T\wedge\tau_R^{\e}}\|\sigma_1(\X_s^{\e})-\sigma_1(\bar\X_s)\|_{\mathcal{L}_{\Q_1}}^2\d s\right)^{1/2}\right]\nonumber\\&\leq\frac{1}{2}\E\left[\sup_{s\in[0,T\wedge\tau_R^{\e}]}\|\Z^{\e}_s\|_{\H}^2\right]+C_{R,\mu,\lambda_1,L_g,L_{\sigma_2},T}\E\left[\int_0^{T\wedge\tau_R^{\e}}\|\Z^{\e}_s\|_{\H}^2\d s\right]. 
	\end{align}
	Using \eqref{3102} in \eqref{3101}, we get 
	\begin{align}\label{3103} 
	&	\E\left[	\sup_{t\in[0,T\wedge\tau_R^{\e}]}	\|\Z^{\e}_{t}\|_{\H}^2\right]\nonumber\\&\leq C_{R,\mu,\lambda_1,L_g,L_{\sigma_2},T}\left(1+\|\x\|_{\H}^3+\|\y\|_{\H}^3\right)\delta^{1/4}+C_{R,\mu,\lambda_1,L_g,L_{\sigma_2},T}\E\left[\int_0^{T\wedge\tau_R^{\e}}\|\Z^{\e}_s\|_{\H}^2\d s\right]\nonumber\\&\quad+C_{R,\mu,\lambda_1,L_g,L_{\sigma_2},T}\E\left[\sup_{t\in[0,T\wedge\tau_R^{\e}]}\left|\int_0^t(f(\X_{s(\delta)}^{\e},\widehat\Y_s^{\e})-\bar f(\X_{s(\delta)}^{\e}),\Z^{\e}_{s(\delta)})\d s\right|\right].
	\end{align}
	An application of Gronwall's inequality in \eqref{3103} yields 
	\begin{align}\label{3104}
	&	\E\left[	\sup_{t\in[0,T\wedge\tau_R^{\e}]}	\|\Z^{\e}_{t}\|_{\H}^2\right]\nonumber\\&\leq C_{R,\mu,\lambda_1,L_g,L_{\sigma_2},T}\left(1+\|\x\|_{\H}^3+\|\y\|_{\H}^3\right)\delta^{1/4}+C_{R,\mu,\lambda_1,L_g,L_{\sigma_2},T}\E\left[\sup_{t\in[0,T]}|I(t)|\right], 
	\end{align}
	where $I(t)=\int_0^t(f(\X_{s(\delta)}^{\e},\widehat\Y_s^{\e})-\bar f(\X_{s(\delta)}^{\e}),\Z^{\e}_{s(\delta)})\d s$. 
	Let us now estimate the final term from the right hand side of the inequality \eqref{3104}.  We follow similar arguments given in Lemma 3.8, \cite{SLXS} to obtain the required result. For the sake of completeness, we give a proof here. Note that 
	\begin{align}\label{3105}
	|I(t)|&= \left|\sum_{k=0}^{[t/\delta]-1}\int_{k\delta}^{(k+1)\delta}(f(\X_{k\delta}^{\e},\widehat\Y_s^{\e})-\bar f(\X_{k\delta}^{\e}),\X^{\e}_{k\delta}-\bar\X_{s(\delta)})\d s\right.\nonumber\\&\quad+\left.\int_{t(\delta)}^t(f(\X_{s(\delta)}^{\e},\widehat\Y_s^{\e})-\bar f(\X_{s(\delta)}^{\e}),\X^{\e}_{s(\delta)}-\bar\X_{s(\delta)})\d s\right|=:|I_1(t)+I_2(t)|. 
	\end{align}
	Using the Assumption \ref{ass3.6} (A1), we estimate $\E\left[\sup\limits_{t\in[0,T]}|I_2(t)|\right]$ as 
	\begin{align}
&	\E\left[\sup\limits_{t\in[0,T]}|I_2(t)|\right]\nonumber\\&\leq\E\left[\sup\limits_{t\in[0,T]}\int_{t(\delta)}^t\|f(\X_{s(\delta)}^{\e},\widehat\Y_s^{\e})-\bar f(\X_{s(\delta)}^{\e})\|_{\H}\|\X^{\e}_{s(\delta)}-\bar\X_{s(\delta)}\|_{\H}\d s\right]\nonumber\\&\leq C\left[\E\left(\sup_{t\in[0,T]}\|\X^{\e}_t-\bar{\X}_t\|_{\H}^2\right)\right]^{1/2}\left[\E\left(\sup_{t\in[0,T]}\left|\int_{t(\delta)}^t\left(1+\|\X^{\e}_{s(\delta)}\|_{\H}+\|\widehat\Y^{\e}_s\|_{\H}\right)\d s\right|^2\right)\right]^{1/2}\nonumber\\&\leq C\delta^{1/2} \left[\E\left(\sup_{t\in[0,T]}(\|\X^{\e}_t\|_{\H}^2+\|\bar{\X}_t\|_{\H}^2)\right)\right]^{1/2}\left[\E\left(\int_0^T\left(1+\|\X^{\e}_{s(\delta)}\|_{\H}^2+\|\widehat\Y^{\e}_s\|_{\H}^2\right)\d s\right)\right]^{1/2}\nonumber\\&\leq C_{\mu,\lambda_1,L_g,T}(1+\|\x\|_{\H}^2+\|\y\|_{\H}^2)\delta^{1/2}, 
	\end{align} 
where we used \eqref{3.7} and \eqref{3.86}. 	Next, we estimate the term $\E\left[\sup\limits_{t\in[0,T]}|I_1(t)|\right]$ as 
	\begin{align}\label{3107}
&	\E\left[\sup\limits_{t\in[0,T]}|I_1(t)|\right]\nonumber\\&\leq \E\left[\sum_{k=0}^{[T/\delta]-1}\left|\int_{k\delta}^{(k+1)\delta}(f(\X_{k\delta}^{\e},\widehat\Y_s^{\e})-\bar f(\X_{k\delta}^{\e}),\X^{\e}_{k\delta}-\bar\X_{k\delta})\d s\right|\right]\nonumber\\&\leq\left[\frac{T}{\delta}\right]\max_{0\leq k\leq [T/\delta]-1}\E\left[\left|\int_{k\delta}^{(k+1)\delta}(f(\X_{k\delta}^{\e},\widehat\Y_s^{\e})-\bar f(\X_{k\delta}^{\e}),\X^{\e}_{k\delta}-\bar\X_{k\delta})\d s\right|\right]\nonumber\\&\leq\frac{C_T}{\delta}\max_{0\leq k\leq [T/\delta]-1}\left[\E\left(\|\X^{\e}_{k\delta}-\bar\X_{k\delta}\|_{\H}^2\right)\right]^{1/2}\left[\E\left\|\int_{k\delta}^{(k+1)\delta}f(\X_{k\delta}^{\e},\widehat\Y_s^{\e})-\bar f(\X_{k\delta}^{\e})\d s\right\|_{\H}^2\right]^{1/2}\nonumber\\&\leq \frac{C_T\e}{\delta}\max_{0\leq k\leq [T/\delta]-1}\left[\E\left(\|\X^{\e}_{k\delta}-\bar\X_{k\delta}\|_{\H}^2\right)\right]^{1/2}\left[\E\left\|\int_{0}^{\frac{\delta}{\e}}f(\X_{k\delta}^{\e},\widehat\Y_{s\e+k\delta}^{\e})-\bar f(\X_{k\delta}^{\e})\d s\right\|_{\H}^2\right]^{1/2}\nonumber\\&\leq\frac{C_{\mu,\lambda_1,L_g,T}(1+\|\x\|_{\H}+\|\y\|_{\H})\e}{\delta}\max_{0\leq k\leq [T/\delta]-1}\left[\int_0^{\frac{\delta}{\e}}\int_r^{\frac{\delta}{\e}}\Phi_k(s,r)\d s\d r\right]^{1/2},
	\end{align}
	where for any $0\leq r\leq s\leq \frac{\delta}{\e}$, 
	\begin{align}\label{3108}
	\Phi_k(s,r):=\E\left[(f(\X_{k\delta}^{\e},\widehat\Y_{s\e+k\delta}^{\e})-\bar f(\X_{k\delta}^{\e}),f(\X_{k\delta}^{\e},\widehat\Y_{r\e+k\delta}^{\e})-\bar f(\X_{k\delta}^{\e}))\right].
	\end{align}
	Now, for any $\e>0$, $s>0$ and $\mathscr{F}_s$-measurable $\H$-valued random variables $\X$ and $\Y$, let $\{\Y_t^{\e,s,\X,\Y}\}_{t\geq s}$ be the unique strong solution of the following It\^o stochastic differential equation: 
	\begin{equation}
	\left\{
	\begin{aligned}
	\d\wi\Y_t&=-\frac{1}{\e}[\mu\A\wi\Y_t+\beta\mathcal{C}(\wi\Y_t)-g(\X,\wi\Y_t)]\d t+\frac{1}{\sqrt{\e}}\sigma_2(\X,\wi\Y_t)\d\W_t^{\Q_2},\\
	\wi\Y_s&=\Y. 
	\end{aligned}
	\right.
	\end{equation}
	Then, from the construction of the  process $\widehat\Y_{t}^{\e}$ (see \eqref{3.37}), for any $t\in[k\delta,(k+1)\delta]$ with $k\in\mathbb{N}$, we have 
	\begin{align}
\widehat\Y_t^{\e}=\widetilde\Y_t^{\e,k\delta,\X_{k\delta}^{\e},\widehat\Y_{k\delta}^{\e}}, \ \mathbb{P}\text{-a.s.}
	\end{align}
	Using this fact in \eqref{3108}, we infer that 
	\begin{align}
	\Phi_k(s,r)&=\E\left[(f(\X_{k\delta}^{\e},\widetilde\Y_{s\e+k\delta}^{\e,k\delta,\X_{k\delta}^{\e},\widehat\Y_{k\delta}^{\e}})-\bar f(\X_{k\delta}^{\e}),f(\X_{k\delta}^{\e},\widetilde\Y_{r\e+k\delta}^{\e,k\delta,\X_{k\delta}^{\e},\widehat\Y_{k\delta}^{\e}})-\bar f(\X_{k\delta}^{\e}))\right]\nonumber\\&=\int_{\Omega}\E\left[(f(\X_{k\delta}^{\e},\widetilde\Y_{s\e+k\delta}^{\e,k\delta,\X_{k\delta}^{\e},\widehat\Y_{k\delta}^{\e}})-\bar f(\X_{k\delta}^{\e}),f(\X_{k\delta}^{\e},\widetilde\Y_{r\e+k\delta}^{\e,k\delta,\X_{k\delta}^{\e},\widehat\Y_{k\delta}^{\e}})-\bar f(\X_{k\delta}^{\e}))\Big|{\mathscr{F}_{k\delta}}\right]\d\mathbb{P}(\omega)\nonumber\\&=\int_{\Omega}\E\left[(f(\X_{k\delta}^{\e}(\omega),\widetilde\Y_{s\e+k\delta}^{\e,k\delta,\X_{k\delta}^{\e}(\omega),\widehat\Y_{k\delta}^{\e}(\omega)})-\bar f(\X_{k\delta}^{\e}(\omega)),\right.\nonumber\\&\qquad\left.f(\X_{k\delta}^{\e}(\omega),\widetilde\Y_{r\e+k\delta}^{\e,k\delta,\X_{k\delta}^{\e}(\omega),\widehat\Y_{k\delta}^{\e}(\omega)})-\bar f(\X_{k\delta}^{\e}(\omega)))\right]\mathbb{P}(\d\omega),
	\end{align}
	where in the final step we used the fact the processes $\X_{k\delta}^{\e}$ and $\widehat{\Y}_{k\delta}^{\e}$ are $\mathscr{F}_{k\delta}$-measurable, whereas the process $\{\widetilde\Y^{\e,k\delta,\x,\y}_{s\e+k\delta}\}_{s\geq 0}$ is independent of $\mathscr{F}_{k\delta}$, for any fixed $(\x,\y)\in\H\times\H$. Moreover, from the definition of the process  $\widetilde\Y^{\e,k\delta,\x,\y}_{s\e+k\delta}$, we obtain 
	\begin{align}\label{3112}
&	\widetilde\Y^{\e,k\delta,\x,\y}_{s\e+k\delta}\nonumber\\&= \y-\frac{\mu}{\e}\int_{k\delta}^{s\e+k\delta}\A\widetilde{\Y}^{\e,k\delta,\x,\y}_r\d r-\frac{\beta}{\e}\int_{k\delta}^{s\e+k\delta}\mathcal{C}(\widetilde{\Y}^{\e,k\delta,\x,\y}_r)\d r+\frac{1}{\e}\int_{k\delta}^{s\e+k\delta}g(\x,\widetilde{\Y}^{\e,k\delta,\x,\y}_r)\d r\nonumber\\&\quad+\frac{1}{\sqrt{\e}}\int_{k\delta}^{s\e+k\delta}\sigma_2(\x,\widetilde{\Y}^{\e,k\delta,\x,\y}_r)\d\W^{\Q_2}_r\nonumber\\&=\y-\frac{\mu}{\e}\int_{0}^{s\e}\A\widetilde{\Y}^{\e,k\delta,\x,\y}_{r+k\delta}\d r-\frac{\beta}{\e}\int_{0}^{s\e}\mathcal{C}(\widetilde{\Y}^{\e,k\delta,\x,\y}_{r+k\delta})\d r+\frac{1}{\e}\int_{0}^{s\e}g(\x,\widetilde{\Y}^{\e,k\delta,\x,\y}_{r+k\delta})\d r\nonumber\\&\quad+\frac{1}{\sqrt{\e}}\int_{0}^{s\e}\sigma_2(\x,\widetilde{\Y}^{\e,k\delta,\x,\y}_{r+k\delta})\d\W^{\Q_2,k\delta}_r\nonumber\\&=\y-\mu\int_{0}^{s}\A\widetilde{\Y}^{\e,k\delta,\x,\y}_{r\e+k\delta}\d r-\beta\int_{0}^{s}\mathcal{C}(\widetilde{\Y}^{\e,k\delta,\x,\y}_{r\e+k\delta})\d r+\int_{0}^{s}g(\x,\widetilde{\Y}^{\e,k\delta,\x,\y}_{r\e+k\delta})\d r\nonumber\\&\quad+\int_{0}^{s}\sigma_2(\x,\widetilde{\Y}^{\e,k\delta,\x,\y}_{r\e+k\delta})\d\widehat\W^{\Q_2,k\delta}_r,
	\end{align}
	where $\{\W_r^{\Q_2,k\delta}:=\W_{r+k\delta}^{\Q_2}-\W_{k\delta}^{\Q_2}\}_{r\geq 0}$ and $\{\widehat\W^{\Q_2,k\delta}_r:=\frac{1}{\sqrt{\e}}\W_{r\e}^{\Q_2,k\delta}\}_{r\geq 0}$. It should be noted that $\widehat\W^{\Q_2,k\delta}_r$ is still a $\Q_2$-Wiener process in $\H$. Using the uniqueness of the strong solutions of \eqref{3.74} and \eqref{3112}, we infer that 
	\begin{align}
\mathscr{L}\left(\{\widetilde\Y^{\e,k\delta,\x,\y}_{s\e+k\delta}\}_{0\leq s\leq\frac{\delta}{\e}}\right)=\mathscr{L}\left(\{\Y^{\x,\y}_{s}\}_{0\leq s\leq\frac{\delta}{\e}}\right),
	\end{align}
	where $\mathscr{L}(\cdot)$ denotes the law of the distribution. Using Markov's property, Proposition \ref{prop3.12}, the estimates \eqref{3.7} and \eqref{341}, we estimate $\Phi_{k}(\cdot,\cdot)$ as 
		\begin{align}\label{3114}
	\Phi_k(s,r)&=\int_{\Omega}\wi\E\left[(f(\X_{k\delta}^{\e}(\omega),\Y_{s}^{\X_{k\delta}^{\e}(\omega),\widehat\Y_{k\delta}^{\e}(\omega)})-\bar f(\X_{k\delta}^{\e}(\omega)),\right.\nonumber\\&\qquad\left.f(\X_{k\delta}^{\e}(\omega),\Y_{r}^{\X_{k\delta}^{\e}(\omega),\widehat\Y_{k\delta}^{\e}(\omega)})-\bar f(\X_{k\delta}^{\e}(\omega)))\right]\mathbb{P}(\d\omega)\nonumber\\&=\int_{\Omega}\int_{\wi{\Omega}}\Big(\wi\E\Big[f(\X_{k\delta}^{\e}(\omega),\Y_{s-r}^{\X_{k\delta}^{\e}(\omega),\z})-\bar f(\X_{k\delta}^{\e}(\omega))\Big],\nonumber\\&\qquad f(\X_{k\delta}^{\e}(\omega),\z)-\bar f(\X_{k\delta}^{\e}(\omega))\Big)\Big|_{\z=\Y_{r}^{\X_{k\delta}^{\e}(\omega),\widehat\Y_{k\delta}^{\e}(\omega)}(\wi\omega)}\wi{\mathbb{P}}(\d\wi{\omega})\mathbb{P}(\d\omega)\nonumber\\&\leq \int_{\Omega}\int_{\wi{\Omega}}\left\|\wi\E\Big[f(\X_{k\delta}^{\e}(\omega),\Y_{s-r}^{\X_{k\delta}^{\e}(\omega),\z})-\bar f(\X_{k\delta}^{\e}(\omega))\Big]\right\|_{\H}\nonumber\\&\qquad\times \|f(\X_{k\delta}^{\e}(\omega),\z)-\bar f(\X_{k\delta}^{\e}(\omega))\|_{\H}\Big|_{\z=\Y_{r}^{\X_{k\delta}^{\e}(\omega),\widehat\Y_{k\delta}^{\e}(\omega)}(\wi\omega)}\wi{\mathbb{P}}(\d\wi{\omega})\mathbb{P}(\d\omega)\nonumber\\&\leq C_{\mu,\lambda_1,L_g}\int_{\Omega}\int_{\wi{\Omega}}\left[1+\|\X_{k\delta}^{\e}(\omega)\|_{\H}+\|\Y_{r}^{\X_{k\delta}^{\e}(\omega),\widehat\Y_{k\delta}^{\e}(\omega)}(\wi\omega)\|_{\H}\right]e^{-\frac{(s-r)\zeta}{2}}\nonumber\\&\qquad \times\left[1+\|\X_{k\delta}^{\e}(\omega)\|_{\H}+\|\Y_{r}^{\X_{k\delta}^{\e}(\omega),\widehat\Y_{k\delta}^{\e}(\omega)}(\wi\omega)\|_{\H}\right]\wi{\mathbb{P}}(\d\wi{\omega})\mathbb{P}(\d\omega)\nonumber\\&\leq C_{\mu,\lambda_1,L_g}\int_{\Omega}\left(1+\|\X_{k\delta}^{\e}(\omega)\|_{\H}^2+\|\widehat\Y_{k\delta}^{\e}(\omega)\|_{\H}^2\right)\mathbb{P}(\d\omega)e^{-\frac{(s-r)\zeta}{2}}\nonumber\\&\leq C_{\mu,\lambda_1,L_g,T}(1+\|\x\|_{\H}^2+\|\y\|_{\H}^2)e^{-\frac{(s-r)\zeta}{2}}. 
	\end{align}
	Using \eqref{3114} in \eqref{3107}, we obtain 
	\begin{align}\label{3115}
		\E\left[\sup\limits_{t\in[0,T]}|I_1(t)|\right]&\leq\frac{C_{\mu,\lambda_1,L_g,T}(1+\|\x\|_{\H}^2+\|\y\|_{\H}^2)\e}{\delta}\left[\int_0^{\frac{\delta}{\e}}\int_r^{\frac{\delta}{\e}}e^{-\frac{(s-r)\zeta}{2}}\d s\d r\right]^{1/2}\nonumber\\&={C_{\mu,\lambda_1,L_g,T}(1+\|\x\|_{\H}^2+\|\y\|_{\H}^2)}\frac{\e}{\delta}\left(\frac{2}{\zeta}\right)^{1/2}\left[\frac{\delta}{\e}+\frac{2}{\zeta}\left(e^{-\frac{\delta\zeta}{2\e}}-1\right)\right]^{1/2}\nonumber\\&\leq C_{\mu,\lambda_1,L_g,L_{\sigma_2},T}(1+\|\x\|_{\H}^2+\|\y\|_{\H}^2)\left[\left(\frac{\e}{\delta}\right)^{1/2}+\frac{\e}{\delta}\right]. 
	\end{align}
	Combining the estimates \eqref{3105}-\eqref{3115}, we get 
		\begin{align}\label{3116}
	\E\left[\sup\limits_{t\in[0,T\wedge\tau_R^{\e}]}|I(t)|\right]&\leq C_{\mu,\lambda_1,L_g,L_{\sigma_2},T}(1+\|\x\|_{\H}^2+\|\y\|_{\H}^2)\left[\left(\frac{\e}{\delta}\right)^{1/2}+\frac{\e}{\delta}+\delta^{1/2}\right]. 
	\end{align}
	Using \eqref{3116} in \eqref{3104}, we find 
	\begin{align}\label{3z25}
		\E\left[	\sup_{t\in[0,T\wedge\tau_R^{\e}]}	\|\Z^{\e}_{t}\|_{\H}^2\right]&\leq C_{R,\mu,\lambda_1,L_g,L_{\sigma_2},T}(1+\|\x\|_{\H}^3+\|\y\|_{\H}^3)\delta^{1/4}\nonumber\\&\quad+C_{\mu,\lambda_1,L_g,L_{\sigma_2},T}(1+\|\x\|_{\H}^2+\|\y\|_{\H}^2)\left[\left(\frac{\e}{\delta}\right)^{1/2}+\frac{\e}{\delta}+\delta^{1/2}\right]\nonumber\\&\leq C_{R,\mu,\lambda_1,L_g,L_{\sigma_2},T}(1+\|\x\|_{\H}^3+\|\y\|_{\H}^3)\left[\left(\frac{\e}{\delta}\right)^{1/2}+\frac{\e}{\delta}+\delta^{1/4}\right], 
	\end{align}
	and the proof is complete for $n=2$ and $r\in[1,3]$.

		Let us now discuss the case $n=3$ and $r\in(3,\infty)$. We just need to estimate the term $2|\langle(\B(\X_s^{\e})-\B(\bar\X_s)),\Z^{\e}_s\rangle|$ only to get the required result. 	From the estimate \eqref{2.23}, we easily have 
	\begin{align}\label{2z27}
	-2	\beta	\langle\mathcal{C}(\X_s^{\e})-\mathcal{C}(\bar\X_s),\Z_s^{\e}\rangle \leq- \beta\||\X_s^{\e}|^{\frac{r-1}{2}}\Z_s^{\e}\|_{\H}^2. 
	\end{align}
	Using H\"older's and Young's inequalities, we estimate the term  $2|\langle(\B(\X_s^{\e})-\B(\bar\X_s)),\Z^{\e}_s\rangle|=2|\langle\B(\Z_s^{\e},\X_s^{\e}),\Z_s^{\e}\rangle |$ as  
	\begin{align}\label{2z28}
	2|\langle\B(\Z_s^{\e},\X_s^{\e}),\Z_s^{\e}\rangle |&\leq 2\|\Z_s^{\e}\|_{\V}\|\X_s^{\e}\Z_s^{\e}\|_{\H}\leq\mu\|\Z_s^{\e}\|_{\V}^2+\frac{1}{\mu }\|\X_s^{\e}\Z_s^{\e}\|_{\H}^2.
	\end{align}
	We take the term $\|\X_s^{\e}\Z_s^{\e}\|_{\H}^2$ from \eqref{2z28} and perform a similar calculation in \eqref{2a29} to get 
	\begin{align}\label{2z29}
	\|\X_s^{\e}\Z_s^{\e}\|_{\H}^2\leq{\beta\mu }\left(\int_{\mathcal{O}}|\X_s^{\e}(x)|^{r-1}|\Z_s^{\e}(x)|^2\d x\right)+\frac{r-3}{r-1}\left(\frac{2}{\beta\mu (r-1)}\right)^{\frac{2}{r-3}}\left(\int_{\mathcal{O}}|\Z_s^{\e}(x)|^2\d x\right),
	\end{align}
	for $r>3$. Combining \eqref{2z27}, \eqref{2z28}  and \eqref{2z29}, we obtain 
	\begin{align}\label{2z30}
	&-2	\beta	\langle\mathcal{C}(\X_s^{\e})-\mathcal{C}(\bar\X_s),\Z_s^{\e}\rangle-2\langle\B(\Z_s^{\e},\X_s^{\e}),\Z_s^{\e}\rangle\leq \frac{r-3}{\mu(r-1)}\left(\frac{2}{\beta\mu (r-1)}\right)^{\frac{2}{r-3}}\|\Z^{\e}_s\|_{\H}^2.
	\end{align}
	Thus a calculation similar to the estimate \eqref{3.99} yields 
	\begin{align}
&	\|\Z^{\e}_{t}\|_{\H}^2+\mu\int_0^t\|\Z^{\e}_s\|_{\V}^2\d s\nonumber\\&\leq \left[\frac{r-3}{\mu(r-1)}\left(\frac{2}{\beta\mu (r-1)}\right)^{\frac{2}{r-3}}+C_{\mu,\lambda_1,L_g,L_{\sigma_2}}\right]\int_0^t\|\Z^{\e}_s\|_{\H}^2\d s\nonumber\\&\quad+C_{\mu,\lambda_1,L_g,L_{\sigma_2}}\int_0^t\|\X_{s}^{\e}-\X_{s(\delta)}^{\e}\|_{\H}^2\d s+C_{\mu,\lambda_1,L_g,L_{\sigma_2}}\int_0^t\|\Y_s^{\e}-\widehat\Y_s^{\e}\|_{\H}^2\d s\nonumber\\&\quad+C\left(\int_0^t(1+\|\X_{s(\delta)}^{\e}\|_{\H}^2+\|\widehat\Y_s^{\e})\|_{\H}^2)\d s\right)^{1/2}\left(\int_0^t(\|\X^{\e}_{s}-\X^{\e}_{s(\delta)}\|_{\H}^2+\|\bar\X_s-\bar\X_{s(\delta)}\|_{\H}^2)\d s\right)^{1/2} \nonumber\\&\quad +2\int_0^t(f(\X_{s(\delta)}^{\e},\widehat\Y_s^{\e})-\bar f(\X_{s(\delta)}^{\e}),\Z^{\e}_{s(\delta)})\d s+2\int_0^t(\sigma_1(\X_t^{\e})-\sigma_1(\bar\X_t)\d\W^{\Q_1}_s,\Z^{\e}_s), 
	\end{align}
for all $t\in[0,T]$, $\mathbb{P}$-a.s. Applying Gronwall's inequality and then using a calculation similar to \eqref{3z25} yields	 the estimate \eqref{390}.

	For $r=3$, 	from \eqref{2.23}, we have 
	\begin{align}\label{2z31}
	-2	\beta	\langle\mathcal{C}(\X_s^{\e})-\mathcal{C}(\bar\X_s),\Z_s^{\e}\rangle \leq- \beta\|\X_s^{\e}\Z_s^{\e}\|_{\H}^2, 
	\end{align}
	and a calculation similar to \eqref{2z28} gives 
	\begin{align}\label{3z72}
	2|\langle\B(\Z_s^{\e},\X_s^{\e}),\Z_s^{\e}\rangle |&\leq 2\|\Z_s^{\e}\|_{\V}\|\X_s^{\e}\Z_s^{\e}\|_{\H}\leq 2\mu\|\Z_s^{\e}\|_{\V}^2+\frac{1}{2\mu }\|\X_s^{\e}\Z_s^{\e}\|_{\H}^2.
	\end{align}
	Combining \eqref{2z31} and \eqref{3z72}, we obtain 
	\begin{align}\label{3125}
	&-2	\beta	\langle\mathcal{C}(\X_s^{\e})-\mathcal{C}(\bar\X_s),\Z_s^{\e}\rangle-2\langle\B(\Z_s^{\e},\X_s^{\e}),\Z_s^{\e}\rangle\leq2\mu\|\Z_s^{\e}\|_{\V}^2 -\left(\beta-\frac{1}{2\mu }\right)\|\X_s^{\e}\Z_s^{\e}\|_{\H}^2,
	\end{align}
	and the hence estimate \eqref{390} follows for  $2\beta\mu\geq 1$. Note that by making use of the estimates \eqref{2z30} and \eqref{3125}, we don't need any stopping time arguments to get the required estimate \eqref{390}. 
\end{proof}
Let us now establish our main result. 
\begin{proof}[Proof of Theorem \ref{maint}]
	We first prove the Theorem for $n=2$ and $r\in[1,3]$. Let $\tau_R^{\e}$ be the stopping time defined in \eqref{stop}. Then, we have 
	\begin{align}\label{3130}
	 \E\left[\sup_{t\in[0,T]}\|\X^{\e}_t-\bar\X_t\|_{\H}^2\right]= \E\left[\sup_{t\in[0,T]}\|\X^{\e}_t-\bar\X_t\|_{\H}^2\chi_{\{T\leq\tau_R^{\e}\}}\right]+\E\left[\sup_{t\in[0,T]}\|\X^{\e}_t-\bar\X_t\|_{\H}^2\chi_{\{T>\tau_R^{\e}\}}\right],
	\end{align}
where $\chi_{t}$ is the indicator function.	From Lemma \ref{lem3.14} (see \eqref{389}), choosing $\delta=\e^{2/3}$, we obtain 
	\begin{align}\label{3131}
\E\left[\sup_{t\in[0,T]}\|\X_t^{\e}-\bar{\X}_t\|_{\H}^2\chi_{\{T\leq\tau_R^{\e}\}}\right]\leq C_{R,\mu,\lambda_1,L_g,L_{\sigma_2},T}\left(1+\|\x\|_{\H}^3+\|\y\|_{\H}^3\right)\left(\e^{1/3}+\e^{1/6}\right).
	\end{align}	
Using H\"older's and Chebyshev’s inequalities,  \eqref{3.7} and \eqref{3.86}, we estimate the second term from the right hand side of the inequality \eqref{3130} as
\begin{align}\label{3132}
\E\left[\sup_{t\in[0,T]}\|\X^{\e}_t-\bar\X_t\|_{\H}^2\chi_{\{T>\tau_R^{\e}\}}\right]&\leq \left[\E\left(\sup_{t\in[0,T]}\|\X^{\e}_t-\bar\X_t\|_{\H}^4\right)\right]^{1/2}[\mathbb{P}(T>\tau_R^{\e})]^{1/2}\nonumber\\&\leq\frac{C_{\mu,\lambda_1,L_g,T}(1+\|\x\|_{\H}^2+\|\y\|_{\H}^2)}{\sqrt{R}}\left[\E\left(\int_0^T\|\X^{\e}_s\|_{\V}^2\d s\right)\right]^{1/2}\nonumber\\&\leq \frac{C_{\mu,\lambda_1,L_g,T}(1+\|\x\|_{\H}^3+\|\y\|_{\H}^3)}{\sqrt{R}}. 
\end{align} 
Combining \eqref{3131}-\eqref{3132} and substitute it in \eqref{3130}, we find 
\begin{align}\label{3133}
& \E\left[\sup_{t\in[0,T]}\|\X^{\e}_t-\bar\X_t\|_{\H}^2\right]\nonumber\\&\leq C_{R,\mu,\lambda_1,L_g,L_{\sigma_2},T}\left(1+\|\x\|_{\H}^3+\|\y\|_{\H}^3\right)\left(\e^{1/3}+\e^{1/6}\right)+\frac{C_{\mu,\lambda_1,L_g,T}(1+\|\x\|_{\H}^3+\|\y\|_{\H}^3)}{\sqrt{R}}.
\end{align}
Letting $\e\to 0$ and then $R\to\infty$ in \eqref{3133}, we obtain
	\begin{align}\label{3134}
\lim_{\e\to 0} \E\left(\sup_{t\in[0,T]}\|\X^{\e}_t-\bar\X_t\|_{\H}^{2}\right)=0.
\end{align}
Using H\"older's inequality, \eqref{3134}, \eqref{3.7} and \eqref{3.86}, we finally get 
\begin{align}
&\lim_{\e\to 0} \E\left(\sup_{t\in[0,T]}\|\X^{\e}_t-\bar\X_t\|_{\H}^{2p}\right)\nonumber\\&=\lim_{\e\to 0} \E\left(\sup_{t\in[0,T]}\|\X^{\e}_t-\bar\X_t\|_{\H}\|\X^{\e}_t-\bar\X_t\|_{\H}^{2p-1}\right)\nonumber\\&\leq \lim_{\e\to 0}\left\{ \left[\E\left(\sup_{t\in[0,T]}\|\X^{\e}_t-\bar\X_t\|_{\H}^2\right)\right]^{1/2}\left[\E\left(\sup_{t\in[0,T]}\|\X^{\e}_t-\bar\X_t\|_{\H}^{4p-2}\right)\right]^{1/2}\right\}=0, 
\end{align}
which completes the proof. 

For $n=2$, $r\in(3,\infty)$ and $n=3$, $r\in[3,\infty)$ ($2\beta\mu\geq 1$ for $r=3$), we don't need the stopping time arguments and the convergence \eqref{3134} can be easily obtained from the estimate \eqref{390} (see \eqref{3131} also).  
\end{proof}
\noindent \textbf{Conflict of interest.} On behalf of all authors, the corresponding author states that there is no conflict of interest. 

 \medskip\noindent
{\bf Acknowledgments:} M. T. Mohan would  like to thank the Department of Science and Technology (DST), India for Innovation in Science Pursuit for Inspired Research (INSPIRE) Faculty Award (IFA17-MA110).

\end{document}